\pdfoutput=1
\documentclass[11pt,letterpaper]{article}

\usepackage[english]{babel}
\usepackage[T1]{fontenc}
\usepackage{stmaryrd}
\usepackage{algorithmic}
\usepackage{amssymb}
\usepackage{marginnote}
\usepackage[ruled,vlined,linesnumbered]{algorithm2e}
\usepackage{booktabs,tabularx,threeparttable}
\usepackage[normalem]{ulem}
\usepackage{tikz}
\usepackage{enumitem}
\usetikzlibrary{fit,backgrounds,positioning,shapes.geometric}
\usepackage{fullpage}
\usepackage{mathrsfs}
\usepackage{amsmath}
\usepackage{bbm}
\usepackage{graphicx}
\usepackage{stackengine}
\usepackage{amsthm}
\usepackage{subcaption}
\usepackage{authblk}
\usepackage{thmtools}
\usepackage{thm-restate}
\usepackage[disable]{todonotes}
\usepackage{fancyhdr,lastpage}
\usepackage{amsfonts}
\usepackage[dvipsnames]{xcolor}
\usepackage[colorlinks=true,allcolors=blue]{hyperref}
\usetikzlibrary{calc, intersections, positioning}

\makeatletter
\renewcommand{\maketitle}{%
\begin{center}
{\large\bfseries \@title\par}
\vspace{0.5em}
{\normalsize \@author\par}
\end{center}
}
\renewenvironment{abstract}{%
\par\normalfont\normalsize
\begin{center}\begin{minipage}{0.9\linewidth}\small\noindent\textup{Abstract. }\ignorespaces}{\end{minipage}\end{center}\par}

\def\@seccntformat#1{\csname the#1\endcsname.\ }

\renewcommand\section{\@startsection{section}{1}{\z@}%
{2.0ex plus .5ex minus .2ex}{1.0ex plus .2ex}%
{\normalfont\large\centering}}

\renewcommand\subsection{\@startsection{subsection}{2}{\z@}%
{1.75ex plus .5ex minus .2ex}{-1em}%
{\normalfont\normalsize\bfseries}}
\makeatother

\newtheorem{thm}{Theorem}[section]

\newtheorem{definition}[thm]{Definition}
\newtheorem{lm}[thm]{Lemma}
\newtheorem{assumption}[thm]{Assumption}

\newtheorem{cor}[thm]{Corollary}
\newtheorem{prop}[thm]{Proposition}

\theoremstyle{remark}
\newtheorem{remark}{Remark}

\theoremstyle{plain}

\newcommand{\set}[1]{\left\{#1\right\}}

\newcommand{\abs}[1]{\left| #1 \right|}

\newcommand{\R}{\mathbb{R}}

\newcommand{\N}{\mathbb{N}}
\newcommand{\C}{\mathbb{C}}

\begin{document}

\title{DISTRIBUTION OF ZEROS OF HOLOMORPHIC FUNCTIONS AND RESONANCES IN LOGARITHMIC REGIONS FOR SCHR\"{O}DINGER OPERATORS ON EUCLIDEAN SPACE}
\author{TRAVIS CUNNINGHAM}
\date{}
\maketitle

\begin{abstract}
    Motivated by scattering theory, this paper proves general results about the distribution of zeros in logarithmic neighborhoods of the real axis for a certain class of functions holomorphic in the closed lower half-plane. We define a new \emph{indicator function} that measures the growth of the holomorphic function along logarithmic curves, and connect this to the distribution of zeros of the function. This is related to classical results on the distribution of zeros in sectors for entire functions of completely regular growth. These results can be applied to the determinant of the scattering matrix of a Schr\"{o}dinger operator on odd-dimensional Euclidean space, yielding bounds on resonance counting functions for logarithmic neighborhoods of the real axis. As a further application, we study a certain family of potentials in one dimension having jump-like singularities. Using our complex-analytic results we show that the singularities can lead to many -- and even infinitely many -- strings of resonances along logarithmic curves. We connect the properties of these strings of resonances, including their location and linear density, both to the parameters describing the singularities of the potential and to the asymptotic behavior of the scattering determinant. 

\end{abstract}

\section{INTRODUCTION}

\subsection{Results}

Resonances are defined as the poles of the meromorphic continuation of the scattering resolvent, and are discrete spectral data replacing eigenvalues in systems that allow energy to escape to infinity. See \cite{10} for a thorough overview of the subject. In this paper, we prove new results on the distribution of zeros of holomorphic functions, and then apply these results to study the high-energy distribution of resonances in regions of the form $\set{\lambda\in\C\colon \operatorname{Im}\lambda\geq -M\log(1+\abs{\lambda}),\operatorname{Re} \lambda\geq 0}$, $M>0$. Throughout the paper we let $\mathcal{R}_V$ denote the set of resonances repeated to multiplicity for the Schr\"{o}dinger operator $-\Delta+V$, $V\in L^\infty_c(\R^n;\C)$. 


A classical result of Vainberg (\cite{26}; see also \cite{16}, \cite[Section 4.6]{10}, \cite{G}) states that for $V \in C_{c}^{\infty}$, the Schr\"{o}dinger operator, $-\Delta + V$, has at most finitely many resonances above the log curve $\set{\operatorname{Im} \lambda = -M\log(1 + \abs{\lambda})}$ for any $M > 0$. In \cite{7}, Christiansen and the author prove the following estimate on the density of resonances in any logarithmic neighborhood of the real axis for Schr\"{o}dinger operators in dimension one. For any $M > 0$, 

\begin{equation*}
\#\set{\lambda_j \in \mathcal{R}_V \colon \operatorname{Im} \lambda_j > -M \log(1 + \abs{\lambda_j}), \pm \operatorname{Re} \lambda_j > 0, \abs{\lambda_j} < r} \leq \frac{1}{\pi} \abs{\text{ch sing supp }V}r(1 + o(1)), \tag{1.1} \label{eq:101}
\end{equation*}
where $\text{ch sing supp } V$ denotes the convex hull of the singular support of $V$. Generalizing prior results of \cite{20}, \cite{27}, \cite{25}, a number of explicit examples given in \cite{7} show
that this bound is sharp in the sense that there exist potentials for which (\ref{eq:101}) is an equality; however, for a given $V$, (\ref{eq:101}) may be a strict inequality for many $M$. In Section \ref{sec:4} of this paper, we will explicitly construct a potential with infinitely many distinct strings of resonances, each with a positive linear density and which lie ever deeper in the complex plane, showing that (\ref{eq:101}) may even be a strict inequality for \emph{all} $M$. 

As is the case with so many results in the theory of scattering resonances \eqref{eq:101} is obtained using the characterization of resonances as the zeros of the scattering determinant, and applying complex analysis. Thus, motivated by the above results, we shall study the distribution of zeros in logarithmic regions for the following class of functions holomorphic in the closed lower half-plane. Recall that a function $f$ holomorphic in some angle $-\pi\leq \alpha\leq \arg z\leq \beta\leq \pi$ is said to be of order $\rho$ if

\begin{equation*}
    \rho=\inf\set{v>0\colon \varlimsup_{r\to\infty} \frac{\log\abs{f(re^{i\theta})}}{r^v}=0,\text{ uniformly in }\theta\in (\alpha,\beta)}.
\end{equation*}

A function of order $\rho$ is said to be of \emph{finite type} provided
\[
    f(z)=O\left(e^{\abs{z}^\rho}\right),\quad \abs{z}\to \infty,\quad \arg z\in (\alpha,\beta). 
\]

\begin{definition}\label{def:1.1}
    We say that $f\in\mathcal{F}$ provided for some integer $\rho\geq 1$,
    \begin{enumerate}[label=\roman*)] 
        \item $f$ is holomorphic and of at most order $\rho$ and finite type in $-\pi\leq \arg z\leq 0$,
        \item $f$ satisfies
        \begin{equation*}
            \int_0^r \frac{f'(s)}{f(s)}ds=O\left(r^{\rho-1}\right),\quad \text{as }r\to\pm\infty. 
        \end{equation*} 
    \end{enumerate}
\end{definition}

In Section \ref{sec:2} we will show that for any $V\in L^\infty_c(\R^n;\C)$ the scattering determinant $\det S_V(-\lambda)$, perhaps multiplied by a rational function, is always in $\mathcal{F}$. We will also show that for any $f\in\mathcal{F}$, the following function is well-defined:

\begin{equation*}
    \ell_f(t):=\varlimsup_{\substack{\abs{z}\to\infty \\ \set{\operatorname{Im} z=-t\log\abs{z},\operatorname{Re}z>0}}}\frac{\log\abs{f(z)}}{\abs{z}^{\rho-1}\log\abs{z}},\quad t\geq 0 \tag{1.2} \label{eq:102}
\end{equation*}

The function $\ell_f$ is motivated by the concept of an \emph{indicator function}, see \cite{17}, \cite{2}, and Section \ref{sec:2}. It is convex (see Lemma \ref{lem:2.1}), and hence it is continuous and has everywhere a right and left derivative that agree at all but countably many points. 

We now state two results connecting the function $\ell_f$ to the distribution of zeros of $f$ in $\set{\operatorname{Re} z>0}$ and in logarithmic neighborhoods of the real axis; our results have corresponding statements for $\set{\operatorname{Re} z<0}$. For $0\leq t<\infty$, we set 

\begin{equation*}
    n(t;r)=\#\set{z\colon f(z)=0, \operatorname{Re}z>0, 0\geq \operatorname{Im} z\geq -t\log(1+\abs{z}), \abs{z}<r}
\end{equation*}
counting all zeros with multiplicity. 

\begin{thm}\label{thm:1.2}
    Let $f\in\mathcal{F}$. Then for any $t\geq 0$, we have
    \begin{equation*}
        \varlimsup_{r\to\infty}\frac{1}{r^\rho}\int_0^t\int_0^r \frac{n(\tau,\nu)}{\nu} d\nu d\tau\leq \frac{\ell_f(t)}{2\pi\rho^2} \tag{1.3} \label{eq:103}
    \end{equation*}
\end{thm}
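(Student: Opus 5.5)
The plan is to apply Jensen-type (Carleman / Green's formula) identities in the region bounded by the real axis and the logarithmic curve $\Gamma_t=\set{\operatorname{Im} z=-t\log(1+\abs z),\operatorname{Re} z>0}$, after straightening it by a change of variable. The inner integral $\int_0^r n(\tau,\nu)\,d\nu/\nu$ is the usual Jensen counting function. The additional integral in $\tau$ suggests working with the ``vertical'' variable: a zero at depth $\abs{\operatorname{Im}z}\approx s\log\abs z$ contributes to $\int_0^t n(\tau,\cdot)\,d\tau$ with weight $(t-s)_+$. So the quantity on the left of \eqref{eq:103} should equal, up to lower order, $\frac{1}{2\pi}\sum_{\abs{z_j}<r}(t-s_j)_+\cdot(\text{radial weight})$. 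This is exactly what arises from integrating $\log\abs f$ against a harmonic-type kernel that is linear in the depth variable.

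\textbf{Step 1 (Jensen formula in a strip-like region).} Use the map $w=\log$-type coordinates, or directly apply Green's formula to $\log\abs{f}$ on $D_{t,r}=\set{0\ge \operatorname{Im} z\ge -t\log(1+\abs z),\ 0<\operatorname{Re} z<r}$ with test function $\phi(z)=(t\log\abs z+\operatorname{Im} z)_+$ times a radial weight. The Laplacian of $\log\abs f$ is $2\pi\sum\delta_{z_j}$. Since $\phi$ vanishes on $\Gamma_t$ and has controlled normal derivative there, the identity reads
\[
2\pi\sum_{z_j\in D_{t,r}}\phi(z_j)w(z_j)=\int_{\partial D}\bigl(\phi\,\partial_n\log\abs f-\log\abs f\,\partial_n(\phi w)\bigr)+\int_D \log\abs f\,\Delta(\phi w).
\]
On the real-axis piece, $\log\abs{f(s)}=\operatorname{Re}\int_0^s f'/f+O(1)=O(s^{\rho-1})$ by condition ii) of Definition \ref{def:1.1}. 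Integrated against $\partial_n(\phi w)$, this gives $o(r^\rho)$ once the weight $w\sim \nu^{\rho-1}$ is chosen to match. The term $\phi\,\partial_n\log\abs f$ on the real axis is handled by an integration by parts in $s$, again using ii).

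\textbf{Step 2 (the curve $\Gamma_t$).} On $\Gamma_t$, $\phi=0$, and $\partial_n\phi\approx 1$. So the boundary term is $\int_{\Gamma_t}\log\abs{f}\,w\,ds\le \int_0^r(\ell_f(t)+\varepsilon)\nu^{\rho-1}\log\nu\cdot w(\nu)\,d\nu$ by the definition \eqref{eq:102}. The interior Laplacian term lives in a region of width $t\log\nu$. Here $\log\abs f\le C\abs z^\rho$ by i), and $\Delta(\phi w)$ is $O(\nu^{-2})$ relative, so that term is $O(r^{\rho}\cdot\log r/r)=o(r^\rho\log r)$.

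\textbf{Step 3 (matching normalizations).} Convert the zero sum to $\int_0^t\int_0^r n(\tau,\nu)\,d\nu/\nu\,d\tau$ via Fubini: $\phi(z_j)=t\log\abs{z_j}-\abs{\operatorname{Im}z_j}=\log\abs{z_j}\,(t-s_j)$. Then choose $w$ so that $\phi w$ reproduces the kernel $\log(r/\abs{z_j})/\log\abs{z_j}$ up to $o$. The correct choice is an $r$-dependent radial weight, Carleman style, e.g.\ $w(\nu)=\frac{1}{\nu\log\nu}\bigl(1-(\nu/r)^{\rho}\bigr)/\rho$ type. The boundary integral then becomes $\int_0^r\ell_f(t)\nu^{\rho-1}(1-(\nu/r)^\rho)\,d\nu/\rho\sim \ell_f(t)r^\rho/(2\rho^2)\cdot$const. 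This yields the constant $\frac{\ell_f(t)}{2\pi\rho^2}$.

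The main obstacle is exactly this choice: the kernel must be (nearly) harmonic so that the interior term is negligible, vanish on $\Gamma_t$, and produce precisely the double-integral counting function with constant $1/(2\pi\rho^2)$. Verifying that the curvature of $\Gamma_t$ (the $\log$ growth) only introduces relative errors $O(1/\log r)$ is the delicate point. I would first try the explicit Carleman kernel in the variables $(\operatorname{Re}z,\operatorname{Im}z/\log\operatorname{Re}z)$ and estimate the errors crudely.
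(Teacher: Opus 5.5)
Your strategy (Green's formula for $\log\abs f$ against a nonnegative kernel that vanishes on $\Gamma_t$ and is linear in the depth variable) is viable. However, Step 3, which you yourself flag as the crux, fails as written.

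With the weight you propose, $w(\nu)=\frac{1-(\nu/r)^\rho}{\rho\,\nu\log\nu}$, a zero $z_j$ enters with weight $\phi(z_j)w(z_j)=(t-s_j)\frac{1-(\abs{z_j}/r)^\rho}{\rho\abs{z_j}}$. By Fubini, the left side of (1.3) is instead $\sum_{\abs{z_j}<r}(t-s_j)_+\log(r/\abs{z_j})$, so your identity bounds a different quantity.

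Inserting this $w$ into your own Step 2 bound also gives $\int^r\nu^{\rho-1}\log\nu\,w(\nu)\,d\nu=\frac1\rho\int^r\nu^{\rho-2}\bigl(1-(\nu/r)^\rho\bigr)d\nu=O(r^{\rho-1})$. A factor $1/\nu$ was dropped, and the factor $\frac12$ in the resulting $r^\rho/(2\rho^2)$ was absorbed into an unexplained ``const''. So neither the counting function nor the constant $1/(2\pi\rho^2)$ is actually produced.

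A smaller point: the outer boundary should be the arc $\abs z=r$, on which the kernel vanishes, not $\operatorname{Re}z=r$. Otherwise a term $\int\psi\,\partial_n\log\abs f\,ds$ survives on a non-real piece of $\partial D$, where $\partial_n\log\abs f$ is not controlled.

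The kernel that works is $w=\log(r/\abs z)/\log\abs z$, i.e.
\[
\psi(z)=\bigl(t\log\abs z+\operatorname{Im}z\bigr)\frac{\log(r/\abs z)}{\log\abs z}\quad\text{on}\quad D=\set{-t\log\abs z\le\operatorname{Im}z\le 0,\ \operatorname{Re}z>0,\ r_0<\abs z<r}.
\]
This kernel has the properties you need:
\begin{itemize}
\item $\psi(z_j)=(t-s_j)\log(r/\abs{z_j})$ exactly.
\item $\psi=0$ on $\Gamma_t$ and on $\abs z=r$, with $\partial_n\psi\le 0$ there.
\item $\Delta\psi=\frac{2\log r\,\abs{\operatorname{Im}z}}{\abs z^2\log^2\abs z}\bigl(1-\frac{1}{\log\abs z}\bigr)\ge 0$.
\end{itemize}
Hence on those boundary pieces and in the interior only upper bounds for $\log\abs f$ are needed. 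The interior and outer-arc contributions are $O(r^{\rho-1}\log^2 r)$ even with the crude bound $\log\abs f\le C\abs z^\rho$.

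On $\Gamma_t$ one has $-\partial_n\psi\,ds=\frac{\log(r/\nu)}{\log\nu}(1+o(1))\,d\nu$. That term is therefore at most $(\ell_f(t)+\varepsilon)\int_0^r\nu^{\rho-1}\log(r/\nu)\,d\nu+o(r^\rho)=(\ell_f(t)+\varepsilon)r^\rho/\rho^2+o(r^\rho)$, which gives exactly the constant $1/(2\pi\rho^2)$.

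The remaining pieces are lower order:
\begin{itemize}
\item On $\R$, $\psi=t\log(r/\nu)$ and $\partial_n\psi=\log(r/\nu)/\log\nu$; these terms are $o(r^\rho)$ by (2.2) and an integration by parts using ii).
\item The inner arc costs $O(\log r)$.
\item Passing between $\log\abs z$ and $\log(1+\abs z)$ in the depth parameter costs $o(r^\rho)$, because there are $O(r^\rho)$ zeros in $\abs z<r$.
\end{itemize}

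With this kernel your identity is a direct derivation of the paper's (2.19). The $\Gamma_t$ term corresponds to the main term $\int_{r_0}^r J^\nu(t)\frac{d\nu}{\nu}$ (recall $q\sim1/\log u$), and the outer-arc term is the $m$-term. The paper reaches (2.19) differently: the argument principle in the coordinates $(u,\varphi)$, the relations (2.14), integration in $r$ (Proposition 2.4) and then in $t$, with the two-sided bounds of Lemma 2.3 for its error terms. The sign properties of $\psi$ let your route avoid Lemma 2.3. On the other hand, the paper's formula for $\int_0^r n(t;\nu)\frac{d\nu}{\nu}$, before integrating in $t$, is what it reuses for Theorem 1.3.
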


This bound is optimal in the sense that there exist non-trivial $f\in\mathcal{F}$ for which \eqref{eq:103} is an equality for all $t\geq 0$. Notice, moreover, that the bound depends only upon the behavior of $f$ along a single $\log$ curve. 

The next result shows that with a better understanding of the growth of $f$ along logarithmic curves, we can obtain a stronger statement about the distribution of its zeros. 

\begin{thm}\label{thm:1.3}
    Let $f\in\mathcal{F}$, and assume that for some open interval $I\subset (0,\infty)$, the limit
    
    \begin{equation*}
        \lim_{\substack{\abs{z}\to \infty \\ \set{\operatorname{Im} z=-t\log\abs{z},\operatorname{Re} z>0}}} \frac{\log\abs{f(z)}}{\abs{z}^{\rho-1}\log\abs{z}}=\ell_f(t) \tag{1.4} \label{eq:104}
    \end{equation*}
exists for all $t\in I$. Then 
\begin{equation*}
    \lim_{r\to\infty} \frac{n(t;r)}{r^\rho}=\frac{\ell_f'(t)}{2\pi\rho}
\end{equation*}
for all $t\in I$ for which $\ell_f'(t)$ exists. 
\end{thm}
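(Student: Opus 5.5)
The proof will follow the classical route from the theory of functions of completely regular growth: first obtain an exact asymptotic formula for the double integral in Theorem \ref{thm:1.2} under hypothesis \eqref{eq:104}, then differentiate in $t$ and finally in $r$, using monotonicity in both variables to pass limits through derivatives.

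\textbf{Step 1: an identity for the double integral.} Apply the argument principle, or equivalently Jensen/Carleman-type formulas, to $f$ on the region
\[
\Omega_t(r)=\set{\operatorname{Re}z>0,\ 0\geq \operatorname{Im} z\geq -t\log(1+\abs{z}),\ \abs{z}<r}.
\]
This is the same computation that gives Theorem \ref{thm:1.2}. The result expresses $\int_0^t\int_0^r n(\tau,\nu)\nu^{-1}\,d\nu\,d\tau$ as a boundary integral of $\log\abs{f}$, up to error terms:
\begin{enumerate}[label=\roman*)]
\item the integral along the real axis, which is $O(r^{\rho})\cdot o(1)$ after dividing by $r^\rho$, by part ii) of Definition \ref{def:1.1};
\item the contribution of the arc $\abs{z}=r$, which is lower order because the arc has length $O(t\log r)$ and $f$ has finite order $\rho$;
\item the integral of $\log\abs{f}$ along the log curve $\operatorname{Im} z=-t\log\abs{z}$, weighted by $1/\nu$ and integrated over $\nu<r$.
\end{enumerate}
In Theorem \ref{thm:1.2} only the $\varlimsup$ of term iii) is used. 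Under \eqref{eq:104}, however, $\log\abs{f(z)}=(\ell_f(t)+o(1))\abs{z}^{\rho-1}\log\abs{z}$ along the curve. Since $\int_0^r \nu^{\rho-1}\log\nu\cdot(\text{curve weight})\,d\nu$ is computed explicitly, we get
\[
\lim_{r\to\infty} r^{-\rho}\int_0^t\int_0^r\frac{n(\tau,\nu)}{\nu}\,d\nu\,d\tau=\frac{\ell_f(t)}{2\pi\rho^2},\qquad t\in I.
\]
The constant must match the one in \eqref{eq:103}. Local uniformity of the limit in $t$ is not needed for this, since we work at each fixed $t$.

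\textbf{Step 2: differentiate in $t$.} Set $N(t;r)=\int_0^r n(t,\nu)\nu^{-1}\,d\nu$, which is nondecreasing in $t$, and $F_r(t)=r^{-\rho}\int_0^t N(\tau;r)\,d\tau$. Each $F_r$ is convex in $t$ because its derivative $r^{-\rho}N(t;r)$ is nondecreasing. By Step 1, $F_r\to \ell_f/(2\pi\rho^2)$ pointwise on $I$. Convex functions converging pointwise have one-sided derivatives that are asymptotically squeezed between the one-sided derivatives of the limit. Concretely, for $h>0$,
\[
\frac{F_r(t)-F_r(t-h)}{h}\leq r^{-\rho}N(t;r)\leq \frac{F_r(t+h)-F_r(t)}{h}.
\]
Let $r\to\infty$, then $h\to0$. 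At points where $\ell_f'(t)$ exists this gives
\[
\lim_{r\to\infty} r^{-\rho}N(t;r)=\frac{\ell_f'(t)}{2\pi\rho^2}.
\]

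\textbf{Step 3: differentiate in $r$.} Since $n(t;\nu)$ is nondecreasing in $\nu$, the same squeeze works in the variable $\log r$. For $k>1$,
\[
n(t;r)\log k\leq N(t;kr)-N(t;r)\leq n(t;kr)\log k.
\]
Combining this with $N(t;r)\sim c r^\rho$, where $c=\ell_f'(t)/(2\pi\rho^2)$, gives
\[
\varlimsup_{r\to\infty} \frac{n(t;r)}{r^\rho}\leq \frac{c(k^\rho-1)}{\log k},
\]
and similarly $\varliminf_{r\to\infty} n(t;r)/r^\rho\geq c(1-k^{-\rho})/\log k$. Letting $k\to1$ yields $c\rho=\ell_f'(t)/(2\pi\rho)$, as claimed.

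\textbf{Main obstacle.} The main difficulty is Step 1: making the boundary-integral identity precise on the curved region $\Omega_t(r)$. One must control the real-axis term using only hypothesis ii) (an $O(r^{\rho-1})$ bound on the argument), and carry out the change of variables along the log curve carefully enough to get the exact constant $1/(2\pi\rho^2)$. I would reuse the setup of the proof of Theorem \ref{thm:1.2}, replacing its $\varlimsup$ estimate by a genuine limit.
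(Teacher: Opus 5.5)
Your architecture matches the paper's. You get an exact asymptotic for $\int_0^t\int_0^r n(\tau;\nu)\nu^{-1}\,d\nu\,d\tau$ from the Carleman-type formula of Proposition 2.4, in its integrated form (2.19). You then squeeze in $t$ using monotonicity of $n$ in $t$; the paper integrates (2.21) over $[t,t+\delta]$ with $\delta$ of either sign, which is your convexity argument. Finally you run the standard argument in $r$ from the theory of completely regular growth. Steps 2 and 3 are fine, and you are right that (1.4) at each fixed $t$ suffices for the $J$-term.

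The gap is item ii) of Step 1. Finite order bounds $\log\abs{f}$ on the arc $\abs{z}=r$ only from \emph{above}. Nothing in your argument bounds it from below, and it can be very negative there, because the arc cuts through exactly the region whose zeros you are counting. Since $m(r,\varphi)\sim-\log(1+r)$, the arc term in (2.19) is essentially $\frac{\log r}{2\pi r}\int_0^t\int_0^\tau \log\abs{f(re^{i\theta_\varphi(r)})}\,d\varphi\,d\tau$. The lower half of your squeeze, $r^{-\rho}N(t;r)\geq h^{-1}\left(F_r(t)-F_r(t-h)\right)$, needs the corresponding difference (over $\tau\in[t-h,t]$) to be $\geq -o(r^\rho)$. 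The upper half only uses the upper bound on $\log\abs{f}$, which is precisely why the proof of Theorem 1.2 yields only a $\varlimsup$. Replacing that $\varlimsup$ by a limit therefore needs a two-sided estimate on the arc term, not just convergence of the $J$-term.

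The paper supplies this with Lemma 2.2. Factor $f=F\,P(z)/\overline{P(\bar z)}$, with $P$ the Weierstrass product over the zeros and $F$ zero-free of finite order in the half-plane. This gives $\abs{\log\abs{f(z)}}\leq C_\varepsilon\abs{z}^{\rho+\varepsilon}$ for $\abs{z}\notin E_\varepsilon$, a set of finite measure. With $\abs{m(r,\varphi)}\lesssim\log r$, the arc term is then $O(r^{\rho-1+\varepsilon}\log r)$, but only for $r\notin E$.

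Hence your Steps 1 and 2 hold only as $r\to\infty$ with $r\notin E$; this is (2.21)--(2.22) and what follows. Step 3 then needs one more monotonicity step before your $k\to1$ argument. For large $r$, both $[(1-\eta)r,r]$ and $[r,(1+\eta)r]$ meet the complement of $E$. Since $N(t;\cdot)$ is nondecreasing, $N(t;r)\sim cr^\rho$ along $r\notin E$ upgrades to all $r$ after letting $\eta\to0$.

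Alternatively, you could derive from the same factorization an integrated lower bound for $\log\abs{f}$ along the arc, valid for every large $r$, using that $\log\abs{z-z_j}$ is integrable along the arc. Either way, some such argument is indispensable.
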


The above complex analytic results are motivated by the theory of entire functions of completely regular growth \cite[Ch.'s II and III]{17} and its generalization to so-called \emph{curves of regular rotation} in \cite{1}, \cite{2}, which relate the growth of holomorphic functions along certain curves to the distribution of its zeros in regions bounded by these curves, though we remark that our growth condition \eqref{eq:104} is stronger than the ones assumed there. See the next subsection and Section \ref{sec:2} for more discussion. 

We now turn to applications of these results to the study of resonances in odd-dimensional Euclidean potential scattering. For any $V\in L_c^\infty(\R^n;\C)$, $n\geq 1$ odd, we will let $\ell_V$ be the function defined through \eqref{eq:102} with $f$ equal to $\det S_V(-\lambda)$, perhaps multiplied by a rational function. With at most finitely many exceptions, the zeros of this $f$ agree with resonances. When $V\in C_c^\infty$, it can be shown that $\ell_V\equiv 0$. On the other hand, Zworski \cite{28} and Stefanov \cite{24} show that for certain radial potentials with a jump singularity, there exists a non-zero $r^n$-density of resonances in a logarithmic region, and Theorem \ref{thm:1.2} then shows that $\ell_V\not\equiv 0$ for these potentials. In this sense, $\ell_V$ captures the singular behavior of the scattering system. See also Theorems \ref{thm:1.6}, \ref{thm:1.7}, and \ref{thm:1.8} below. In Sections \ref{sec:3} and \ref{sec:4}, the function $\ell_V$ is computed explicitly for certain families of singular potentials. 

We may now state a general bound for resonances in logarithmic regions. Recall that since $\ell_V$ is convex, its right-hand derivative $\ell_V'(t+)$ exists for all $t\geq 0$. 

\begin{thm}\label{thm:1.4}
    For any $V\in L_c^\infty(\R^n;\C)$ and any $t\geq 0$,

    \begin{equation*}
        \varlimsup_{r\to\infty} \frac{\# \set{\lambda_j\in\mathcal{R}_V\colon \operatorname{Im} \lambda_j>-t\log(1+\abs{\lambda_j}), \operatorname{Re} \lambda_j>0, \abs{\lambda_j}<r}}{r^n}\leq \frac{e}{\pi n}\ell_V'(2t+). 
    \end{equation*}
\end{thm}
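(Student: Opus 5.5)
The plan is to deduce Theorem \ref{thm:1.4} from Theorem \ref{thm:1.2} applied with $\rho=n$ to $f(\lambda)=\det S_V(-\lambda)$, multiplied by a suitable rational function. This uses two facts from Section \ref{sec:2}: that this $f$ lies in $\mathcal{F}$ with $\rho=n$, and that its zeros in $\set{\operatorname{Re} z>0}$ agree with the resonances up to finitely many exceptions. Only the lower half-plane matters, since there are at most finitely many resonances in the upper half-plane, and finitely many exceptional points do not affect an $r^n$-density. So it suffices to bound $\varlimsup_{r\to\infty} n(t;r)/r^n$, with $n(t;r)$ the zero counting function of $f$.

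\textbf{Step 1: from the averaged count to the count.} Because $n(\tau,\nu)$ is nondecreasing in both $\tau$ and $\nu$, for any $s>t$ and $a>1$,
\begin{equation*}
\int_0^s\int_0^r \frac{n(\tau,\nu)}{\nu}\,d\nu\,d\tau\;\geq\;\int_t^s\int_{r/a}^r \frac{n(t,r/a)}{\nu}\,d\nu\,d\tau=(s-t)\log a\; n(t;r/a).
\end{equation*}
Dividing by $r^n$ and applying Theorem \ref{thm:1.2} at $s$ gives
\begin{equation*}
\varlimsup_{R\to\infty}\frac{n(t;R)}{R^n}\leq \frac{\ell_f(s)}{2\pi n^2}\cdot\frac{a^n}{(s-t)\log a}.
\end{equation*}

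\textbf{Step 2: optimize in $a$ and $s$.} The function $a^n/\log a$ is minimized at $\log a=1/n$, where it equals $en$. This yields the bound $e\,\ell_f(s)/(2\pi n(s-t))$. Now take $s=2t$, which gives $e\,\ell_f(2t)/(2\pi n t)$.

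\textbf{Step 3: convexity.} By Lemma \ref{lem:2.1}, $\ell_f$ is convex. I also need $\ell_f(0)\leq 0$. This should follow from condition ii) of Definition \ref{def:1.1}: on the real axis, $\log\abs{f(x)}=\operatorname{Re}\int_0^x f'/f+O(1)=O(\abs{x}^{n-1})$, and dividing by $\abs{x}^{n-1}\log\abs{x}$ gives $0$ in the limit. Convexity then gives
\begin{equation*}
\ell_f(2t)\leq \ell_f(0)+2t\,\ell_f'(2t-)\leq 2t\,\ell_f'(2t+).
\end{equation*}
Substituting this into Step 2 gives exactly $\frac{e}{\pi n}\ell_V'(2t+)$. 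The case $t=0$ needs a separate remark: either handle it by monotonicity in $t$ together with right-continuity of $\ell'(\cdot+)$, or note that the region then only contains finitely many points near the real axis.

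\textbf{Main obstacle.} I expect the main obstacle to be the bookkeeping in Step 3 and in the identification of zeros with resonances. One must check that $\ell_f(0)\leq 0$ genuinely holds for the normalized $f$, including the effect of the rational factor, which contributes only $O(\log\abs{x})$ and so is negligible. One must also check that replacing $\log\abs{z}$ by $\log(1+\abs{z})$ and $\operatorname{Im}\lambda>-t\log$ by $\geq$ changes nothing at the $r^n$ scale. The analytic core is Steps 1 and 2, which are elementary once Theorem \ref{thm:1.2} is available.
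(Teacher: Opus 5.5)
Your proposal is correct and follows essentially the paper's route: the paper proves Corollary~\ref{cor:2.5} by the same monotonicity lower bound fed into Theorem~\ref{thm:1.2} (your $s=ct$ with $c=2$, and $a=d=e^{1/\rho}$), combined with convexity and $\ell_f(0)=0$, and then gets Theorem~\ref{thm:1.4} by checking that the rationally normalized $\det S_V(-\lambda)$ lies in $\mathcal F$ with $\rho=n$ and that its zeros agree with the resonances up to finitely many exceptions. Your separate treatment of $t=0$, using monotonicity in $t$ and right-continuity of $\ell_V'(\cdot+)$, covers a degenerate case that the paper's division by $(c-1)t$ silently skips.
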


This theorem will follow as a corollary to Theorem \ref{thm:1.2} provided in Section \ref{sec:2}. Using the methods of Section \ref{sec:3}, it is easy to construct examples of potentials in dimension one for which Theorem \ref{thm:1.4} gives a sharper bound than \eqref{eq:101}. In addition, it gives new quantitative estimates on resonance density in logarithmic regions for \emph{any} odd dimension.

Next we turn to a certain class of potentials in dimension one having many jump-like singularities, and for which the relationship between the singular behavior of $V$, the function $\ell_V$, and the distribution of resonances can be described in much more detail. 

\begin{definition}\label{def:1.5}
    We say $V \in \mathcal V$ provided that for some $N > 0$, there exist $x_0 < x_1 < \cdots < x_{N}$ and $\set{\nu_k}_{k=0}^{N} \subset \N_0$ such that $\operatorname{ch supp} V = [x_0, x_N]$ and the following hold:
    \begin{enumerate}[label=\roman*)] 
        \item $\nu_k$ is the smallest integer such that $V^{(\nu_k)}(x_k-) \neq V^{(\nu_k)}(x_{k}+), k = 0, 1, \ldots, N$. 
        
        \item $V \in C^{M_k}\left([x_{k-1}, x_k] \right)$ where $M_k = \max \set{\nu_{k-1}, \nu_k} + 1$, $k = 1, 2, \ldots, N$.

        \item Let $C_{j, k} := (-1)^{\nu_j + 1}\left(V^{(\nu_j)}(x_j+) - V^{(\nu_j)}(x_j-) \right) \left(V^{(\nu_k)}(x_k+) - V^{(\nu_k)}(x_k -) \right)$. If, for some $T \geq 1$, the lengths of some subintervals $\set{[x_{j_t}, x_{k_t}]}_{t=0}^{T}$ are all equal, and if also the corresponding $\set{\nu_{j_t} + \nu_{k_t}}_{t-0}^{T}$ are all equal, then we require $\sum_{t=0}^{T} C_{j_t, k_t} \neq 0$. 
    \end{enumerate}
\end{definition}

We remark that condition (iii) is not always required for the results below to hold; it is assumed only to ensure that we avoid a certain degenerate situation which could perhaps be dealt with by some other means (see Remark \ref{remark:1} after (\ref{eq:407}), and section \ref{sec:4.3}, for details).

In \cite{7}, Christiansen and the author studied the resonances for similar potentials having either two or three such singularities, and satisfying certain additional simplifying assumptions. It is shown there that such potentials can produce either one or two strings of resonances along logarithmic curves, generalizing prior results of \cite{20}, \cite{27}, and \cite{25}. The following theorems show that potentials in $\mathcal V$ produce multiple strings of resonances along logarithmic curves, and provide a detailed connection between these strings and the function $\ell_V$. 

\begin{thm}\label{thm:1.6}
    Let $V \in \mathcal V$. If $t_0$ is a discontinuity of $\ell_V'$, then there exists a sequence of resonances $\set{\lambda_{m}}_{m=1}^{\infty}$ with linear density 
    \begin{equation*}
        \frac{1}{2\pi }\left(\ell_V'(t_0 +)- \ell_V'(t_0-) \right)
    \end{equation*}
    satisfying 
    \begin{equation*}
        \lim_{m \to \infty} \frac{- \operatorname{Im} \lambda_m}{\log \operatorname{Re} \lambda_m} = t_0.
    \end{equation*}
\end{thm}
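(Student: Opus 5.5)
We will work with the one-dimensional structure directly. For $V\in\mathcal V$, $\rho=n=1$, and the scattering determinant is, up to a rational factor, $f(\lambda)=\det S_V(-\lambda)$. We plan to derive a high-energy asymptotic expansion of $f$ in the lower half-plane as a finite exponential sum. For each pair $j<k$, iterated integration by parts in the Born/Jost-solution series (using conditions (i)--(ii) of Definition \ref{def:1.5}) produces a term
\begin{equation*}
C_{j,k}\,(2i\lambda)^{-(\nu_j+\nu_k+2)}e^{-2i\lambda(x_k-x_j)},
\end{equation*}
and the remainders are of smaller order. Grouping pairs with equal length $L=x_k-x_j$ and equal $\nu_j+\nu_k=:p$, the coefficients add. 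Condition (iii) guarantees that each group's total coefficient is nonzero. Thus
\begin{equation*}
f(\lambda)=1+\sum_{(L,p)} A_{L,p}\lambda^{-p-2}e^{-2i\lambda L}\bigl(1+o(1)\bigr),
\end{equation*}
with finitely many distinct pairs $(L,p)$.

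On the curve $\operatorname{Im}\lambda=-t\log|\lambda|$ the term indexed by $(L,p)$ has modulus $|\lambda|^{2Lt-p-2}$ up to constants. Hence $\ell_V(t)=\max\bigl(0,\max_{(L,p)}(2Lt-p-2)\bigr)$, a piecewise linear convex function; the limit in \eqref{eq:104} exists off the finitely many breakpoints. A discontinuity $t_0$ of $\ell_V'$ is a breakpoint where the dominant term switches. At $t_0$, some set of exponential terms with differing $L$ tie for maximal modulus $|\lambda|^{\ell_V(t_0)}$. These are the term(s) with smallest $L$ among the maximizers (slope $\ell_V'(t_0-)$) and with largest $L$ (slope $\ell_V'(t_0+)$), possibly with intermediate ones.

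Near the curve $\operatorname{Im}\lambda=-t_0\log|\lambda|$, we would set $\lambda=\mu-it_0\log\mu+w$ with $w$ bounded. We divide $f$ by the dominant term with the smallest $L$. The equation $f=0$ then becomes, up to $o(1)$ errors, $P(e^{-2iw'})=0$, where $P$ is an exponential polynomial (a polynomial in $e^{-2i\lambda\delta}$ when the tied $L$'s are commensurate, and in general a fixed almost-periodic function) with nonzero extreme coefficients. We would apply Rouché's theorem on small discs around the zeros of this model equation to place zeros of $f$ nearby. Counting the model zeros in boxes $\{\operatorname{Re}\lambda\in[0,r]\}$ of bounded height around the curve gives linear density $\frac{1}{2\pi}\cdot 2(L_{\max}-L_{\min})$. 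Since $\ell_V'(t_0\pm)=2L_{\max}$ and $2L_{\min}$ respectively, this equals $\frac1{2\pi}(\ell_V'(t_0+)-\ell_V'(t_0-))$. As a consistency check, Theorem \ref{thm:1.3}, applied on intervals on either side of $t_0$, gives $n(t;r)/r\to\ell_V'(t)/2\pi$. The jump across $t_0$ therefore counts exactly the zeros in thin logarithmic bands around $t_0$, and this matches the count above. The resonances so found satisfy $-\operatorname{Im}\lambda_m/\log\operatorname{Re}\lambda_m\to t_0$ because $w$ stays bounded.

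The main obstacle is the first step: obtaining the asymptotic expansion of $\det S_V$ with uniform error control in the region $0\ge \operatorname{Im}\lambda\ge -M\log|\lambda|$. There $e^{-2i\lambda L}$ grows polynomially, so the remainders must be shown to be $o$ of the dominant terms uniformly. We would first try expressing $\det S_V(-\lambda)$ through the Jost function/transmission coefficient and a Volterra iteration, integrating by parts $M_k$ times on each $[x_{k-1},x_k]$. Only pairwise (second-order) interaction terms reach the leading order. Higher iterates carry extra powers $\lambda^{-1}$, which are not compensated by the largest exponential $e^{-2i\lambda(x_N-x_0)}$, and this should be checked carefully. A secondary difficulty is the Rouché step when the tied lengths are incommensurable. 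There we would use the standard zero-distribution theory for exponential sums (Pólya–Langer), which gives density $(L_{\max}-L_{\min})/\pi$ in vertical strips.
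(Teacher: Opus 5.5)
Your outline is correct, but your main route differs from the paper's. The paper never studies the zeros of a model equation. From \eqref{eq:413}, Lemma~\ref{lem:4.4} and Remark~\ref{rem:3} show that the limit \eqref{eq:104} exists for all $t$ except the finitely many discontinuities of $\ell_V'$. Hence Theorem~\ref{thm:1.3} applies at any $s<t_0<t$ near $t_0$, and the number of resonances with $-t\log(1+\abs{\lambda})<\operatorname{Im}\lambda<-s\log(1+\abs{\lambda})$, $\operatorname{Re}\lambda>0$, $\abs{\lambda}<r$ is $\frac{r}{2\pi}\bigl(\ell_V'(t)-\ell_V'(s)\bigr)+o(r)$. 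Choose $s,t$ so that $t_0$ is the only discontinuity in $[s,t]$. Thin bands around continuity points of $\ell_V'$ are eventually zero-free, as in the proof of Theorem~\ref{thm:1.7}, so these resonances form the required sequence. What you call a consistency check is therefore the whole of the paper's proof, and it makes Rouch\'e unnecessary.

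Your route compares $\mu_0^{-\ell_V(t_0)}f(\mu_0-it_0\log\mu_0+w)$ with the single exponential polynomial $g(\mu_0+w)$, where $g(\zeta)=\sum A_{L,p}e^{2iL\zeta}$ runs over the tied terms. It buys more: it places the resonances within $o(1)$ of the zeros of $g$ transported along the logarithmic curve. This is essentially the Hardy-type method the introduction mentions, extended to non-generic ties. It costs two things. First, the expansion must hold uniformly on two-dimensional neighborhoods of the curve; Lemma~\ref{lem:4.3} does give this. Second, Rouch\'e on small discs needs the uniform bound $\abs{g(\zeta)}\geq m_\delta>0$ on the strip outside the $\delta$-neighborhood of the zero set of $g$, which follows from almost periodicity. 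Where zeros cluster, apply Rouch\'e on the boundaries of the components of the union of discs. Without this bound you can neither compare on the circles nor exclude extra zeros, so you should state it explicitly.

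There are two bookkeeping slips; neither affects this theorem, since it is phrased through $\ell_V'$.
\begin{itemize}
\item By \eqref{eq:405}, $\det S_V(-\lambda)=1+\frac{1}{4\lambda^2}\rho_-(-\lambda)\rho_+(-\lambda)+O(\abs{\lambda}^{-1})$. So each pair contributes $\lambda^{-(\nu_j+\nu_k+4)}$, and $\ell_V(t)=\max\bigl(0,\max(2Lt-p-4)\bigr)$; compare $t_0$ in Theorem~\ref{thm:1.8}.
\item The exponential is $e^{+2i\lambda L}$. That is the one that grows in $\operatorname{Im}\lambda\leq 0$, and it matches your modulus $\abs{\lambda}^{2Lt-\cdots}$.
\end{itemize}

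Your concern about higher iterates is settled by the same formula. By \eqref{eq:402}, $T_\pm=O(1)$ uniformly in $\operatorname{Im}\lambda\leq 0$, so only $\rho_-\rho_+$ grows. Lemma~\ref{lem:4.3} shows that its remainders carry the same exponentials with an extra factor $\abs{\lambda}^{-1}$.
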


In other words, a jump discontinuity of $\ell_V'$ implies the existence of a string of resonances. The following converse shows that these strings constitute all of the resonances of $V$. 

\begin{thm}\label{thm:1.7}
    Let $V \in \mathcal V$ and let $\set{\lambda_m}_{m=1}^{\infty}$ be a sequence of resonances in $\set{\operatorname{Re \lambda \geq 0}}$. Then there exists $t_0$ with $\ell_V'(t_0 + ) \neq \ell_V'(t_0-)$ such that for some subsequence $\set{\lambda_{m_k}}_{k=1}^{\infty}$ we have 
    \begin{equation*}
        \lim_{k \to \infty} \frac{-\operatorname{Im} \lambda_{m_k}}{\log \operatorname{Re} \lambda_{m_k}} = t_0. 
    \end{equation*}
\end{thm}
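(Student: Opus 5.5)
The approach is to derive the structure of the scattering determinant for $V\in\mathcal V$ explicitly, read off $\ell_V$ as a piecewise linear function, and then locate all zeros by a Rouché-type argument along logarithmic curves. Concretely, in dimension one $\det S_V(-\lambda)$ is, up to a rational factor, a ratio built from the transmission coefficient. For large $|\lambda|$ in the lower half-plane it should admit an asymptotic expansion (derived in Section~\ref{sec:4}, via iterated integration by parts in the Born series using condition (ii)) of the form
\[
f(\lambda)=1+\sum_{(j,k)} \frac{c_{j,k}\,C_{j,k}}{\lambda^{\nu_j+\nu_k+4}}\,e^{-2i\lambda(x_k-x_j)}\bigl(1+o(1)\bigr)+\text{(higher-order products)},
\]
with exponent $e^{-2i\lambda L}$ for each interval length $L=x_k-x_j$ and power determined by $\nu_j+\nu_k$. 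Condition (iii) guarantees that, after grouping terms with equal length and equal power, the leading coefficients do not cancel. On the curve $\operatorname{Im}\lambda=-t\log\lambda$ we have $|e^{-2i\lambda L}|=\lambda^{-2tL}$, so each term has size $\lambda^{2tL-p}$ up to constants. Hence $\ell_V(t)=\max\bigl(0,\max_{(L,p)}(2tL-p)\bigr)$, i.e.\ $\ell_V$ is the upper envelope of finitely many lines (with $\rho=1$, normalized by $\log|\lambda|$). Its derivative jumps exactly at the points $t_0$ where two or more terms with different lengths tie for maximal modulus.

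Given this, I will prove Theorem~\ref{thm:1.7} as follows. Let $\lambda_m$ be resonances with $\operatorname{Re}\lambda_m\ge0$; there are finitely many in any compact set, so $|\lambda_m|\to\infty$. By the Vainberg-type result quoted in the introduction, only finitely many lie above any log curve when the potential is smooth. In our case I argue directly: if $-\operatorname{Im}\lambda_m/\log\operatorname{Re}\lambda_m$ is unbounded (or $\operatorname{Re}\lambda_m$ stays bounded), then the term with largest $L$ dominates all others by a power of $|\lambda|$, so $f\neq0$ there. This requires checking that the higher-order products in the Born expansion are controlled by the same envelope. So pass to a subsequence with $t_m:=-\operatorname{Im}\lambda_m/\log\operatorname{Re}\lambda_m\to t_0\in[0,\infty)$. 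Suppose $\ell_V'$ is continuous at $t_0$. Then a single term (the constant $1$ or one grouped term $(L,p)$) strictly dominates in a neighbourhood $|t-t_0|<\delta$: every other term is smaller by a factor $|\lambda|^{-\epsilon}$ uniformly there. Writing $f=T_{\max}\,(1+o(1))$ with $T_{\max}\neq0$ shows $f(\lambda_m)\neq0$ for large $m$, which is a contradiction. Hence $\ell_V'(t_0+)\neq\ell_V'(t_0-)$.

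The main obstacle is making the asymptotic expansion uniform in the logarithmic strip $\{ -C\log|\lambda|\le\operatorname{Im}\lambda\le0\}$, including the error terms and all multiple-scattering products. Each product term also has the form $\lambda^{-q}e^{-2i\lambda L'}$, with $L'$ a sum of interval lengths; I would bound it by the single-interval term for the total span and check that it cannot exceed the envelope except where it coincides with an existing line. Condition (iii) enters exactly here: it ensures that the dominant term on an open $t$-interval genuinely has nonzero coefficient, so the ``$1+o(1)$'' factorization is legitimate. A subtle point is $t_0=0$ near the real axis, where the constant $1$ dominates. This is handled the same way provided no line has slope zero, which holds because all $L>0$.
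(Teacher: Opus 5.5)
Your proposal is correct and is essentially the paper's proof. Dominance of the full-span $(N,0)$ term below a deep logarithmic curve (Remark~\ref{remark:2}) makes $-\operatorname{Im}\lambda_m/\log\operatorname{Re}\lambda_m$ eventually bounded; a single term of \eqref{eq:413} dominating uniformly on a strip around any $t$ where $\ell_V'$ is continuous (the mechanism behind Lemma~\ref{lem:4.4} and Remark~\ref{rem:3}) rules out zeros there; and a convergent subsequence finishes the argument. Only cosmetic fixes are needed. The exponential in \eqref{eq:413} is $e^{2i\lambda(x_p-x_m)}$, of modulus about $|\lambda|^{2tL}$ on the curve, so your $e^{-2i\lambda L}$ has the wrong sign. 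The multiple-scattering products you worry about do not arise: \eqref{eq:405} and Lemma~\ref{lem:4.3} involve only the physical resolvent $R_V(-\lambda)$ and give just the pair terms, with errors uniform in all of $\operatorname{Im}\lambda\le 0$. Finally, near $t=0$ the constant term dominates because every line equals $-(4+\nu_j+\nu_k)<0$ there, not because of any slope condition.
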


Together with Theorem \ref{thm:1.6}, this gives a new characterization of the strings of resonances in terms of discontinuities of $\ell_V'$, a function defined naturally in terms of the scattering determinant. Closer analysis provides connections between the parameters defining the singularities of $V$ and the discontinuities of $\ell_V'$. In particular, we show that each discontinuity $t_0$, and each of the jumps of $\ell_V'$ across $t_0$, have explicit expressions in terms of these parameters. The precise statements are contained in the following: 

\begin{thm}\label{thm:1.8}
    Let $V \in \mathcal V$, and let $\set{x_k}_{k=0}^N, \set{\nu_k}_{k=0}^N$ be as in the definition of $\mathcal V$. Then there are at most $N(N+1)/2$ discontinuities of $\ell_V'$, hence at most $N(N+1)/2$ strings of resonances. Moreover, the first discontinuity occurs at 
    \begin{equation*}
        t_0 := \min_{0 \leq m < l \leq N} \frac{4 + \nu_l + \nu_m}{2(x_l - x_m)}
    \end{equation*}
    and 
    \begin{equation*}
        \ell_V'(t_0+) - \ell_V'(t_0-) = \max \set{2(x_l - x_m) \colon \frac{4 + \nu_l + \nu_m}{2(x_l - x_m)} = t_0}.
    \end{equation*}
    If $\ell_V'$ has a discontinuity at $t > t_0$, then for some $l_0 > m_0$, $l_1 > m_1$, $t$ has the form
    \begin{equation*}
        t = \frac{\nu_{l_1} + \nu_{m_1} - \nu_{l_0} - \nu_{m_0}}{2[(x_{l_1} - x_{m_1}) - (x_{l_0} - x_{m_0})]}
    \end{equation*}
    and 
    \begin{equation*}
        \ell_V'(t +) - \ell_V'(t-) = 2[(x_{l_1} - x_{m_1}) - (x_{l_0} - x_{m_0})].
    \end{equation*}
\end{thm}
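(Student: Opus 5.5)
The plan is to compute $\ell_V$ explicitly from a high-energy asymptotic expansion of the scattering determinant for $V\in\mathcal V$. In one dimension, up to a rational factor, $\det S_V(-\lambda)$ can be expressed through the Wronskian of Jost solutions, or equivalently a Fredholm determinant $\det(I+R_0(\lambda)V)$. Expanding this determinant in a Born series, the leading oscillatory contributions at large $|\lambda|$ in the lower half plane come from pairs of singular points $x_m<x_l$. I will integrate by parts repeatedly, using the regularity $V\in C^{M_k}([x_{k-1},x_k])$. Each pair $(m,l)$ should then contribute a term of the form
\[
c\,C_{m,l}\,(2i\lambda)^{-(4+\nu_l+\nu_m)}e^{-2i\lambda(x_l-x_m)}\bigl(1+o(1)\bigr).
\]
The first Born term is smooth in the sense that its contribution is $O(1/\lambda)$ with no growing exponential. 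Higher-order terms are dominated by products of these pair terms. The target representation is therefore
\[
f(\lambda)=1+\sum_{m<l}a_{ml}\,\lambda^{-(4+\nu_l+\nu_m)}e^{-2i\lambda(x_l-x_m)}(1+o(1))+\text{(products)}.
\]
Condition (iii) guarantees that when several pairs share both the same length and the same exponent, their coefficients $\sum C$ do not cancel.

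Next I evaluate along $\operatorname{Im}\lambda=-t\log|\lambda|$. Here $\rho=1$, so the normalization in \eqref{eq:102} is $\log|\lambda|$. A term with length $d=x_l-x_m$ and exponent $p=4+\nu_l+\nu_m$ has modulus $|\lambda|^{2dt-p}$. Once I check that products of pair terms never dominate the largest single term (after the zeroth-order term $1$), I obtain
\[
\ell_V(t)=\max\Bigl(0,\ \max_{m<l}\bigl(2(x_l-x_m)t-(4+\nu_l+\nu_m)\bigr)\Bigr).
\]
This is a maximum of at most $N(N+1)/2$ affine functions together with $0$, so it is piecewise linear and convex.

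The breakpoints of this maximum give at most $N(N+1)/2$ discontinuities of $\ell_V'$. The first breakpoint is where some line first crosses $0$, namely $t_0=\min (4+\nu_l+\nu_m)/(2(x_l-x_m))$. The jump there is the largest slope among the lines vanishing at $t_0$, that is, $\max 2(x_l-x_m)$. Every later breakpoint is an intersection of two lines $2d_0t-p_0$ and $2d_1t-p_1$. This gives $t=(p_1-p_0)/(2(d_1-d_0))$, in which the constant $4$ cancels, with jump $2(d_1-d_1')$ as stated. For the count of strings, Theorems~\ref{thm:1.6} and \ref{thm:1.7} show that strings correspond exactly to these discontinuities.

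The main obstacle is justifying the asymptotic form of $\ell_V$ rigorously. This has two parts. First, the Born-series remainders must be uniformly smaller than the dominant exponential term along every log curve. Second, products of two or more pair terms, such as $e^{-2i\lambda(d+d')}$ with exponent $p+p'$, must never beat the dominant single term. For the second point I would first try the geometric claim that a product of several reflections corresponds to a longer path whose total length is at most $x_N-x_0$. Its exponent, however, adds at least $4$ per factor, so it lies below the single-pair line for the pair spanning the same total interval. This comparison needs care, and may need the nonvanishing from (iii) to rule out cancellation at ties.
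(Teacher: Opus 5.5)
Your final step is correct and is a cleaner packaging of what the paper does. Once $\det S_V(-\lambda)$ is known to be $1$ plus finitely many pair terms (merged by condition (iii) as in \eqref{eq:413}), writing $\ell_V(t)=\max\bigl(0,\max_{m<l}(2(x_l-x_m)t-(4+\nu_l+\nu_m))\bigr)$ gives the count, $t_0$, and both jump formulas at once. The later jump is $2(d_1-d_0)$, not $2(d_1-d_1')$. The paper reaches the same upper envelope by repeatedly factoring out the dominant term, \eqref{eq:414}--\eqref{eq:416}.

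The gap is the expansion itself, which carries all the analytic content. You leave it as your ``main obstacle,'' and the route you name would not produce it. In a genuine Born expansion (the Neumann series for $(I+VR_0)^{-1}$, or $\log\det(I+VR_0)=\sum_k\frac{(-1)^{k+1}}{k}\operatorname{tr}(VR_0)^k$) the paths are unordered. For instance, $\operatorname{tr}(VR_0)^4$ already contains the phase $e^{4i\lambda(x_l-x_m)}$, from a path crossing $[x_m,x_l]$ four times, with amplitude comparable to the square of the $(l,m)$ pair term. Take $(l,m)$ to be the pair realizing $t_0$, and $t$ slightly larger than $t_0$. On $\operatorname{Im}\lambda=-t\log|\lambda|$ that pair term has modulus $|\lambda|^{\varepsilon}$ with $\varepsilon>0$, so its square is larger. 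Hence your geometric claim (total length at most $x_N-x_0$, with exponent increasing by at least $4$ per extra factor) is false for Born paths. There is no term-by-term domination, and the expansion cannot be read off from that series.

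The missing idea is a structure in which multiple scattering is either ordered or absorbed.

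\emph{Keeping your approach.} Use the Fredholm minor expansion of $D(\lambda)=\det(I+VR_0(\lambda)\rho)$ instead of a Born series.
\begin{itemize}
\item For $y_1<\dots<y_k$ one has $\det[e^{i\lambda|y_i-y_j|}]_{i,j\le k}=\prod_{j=1}^{k-1}(1-e^{2i\lambda(y_{j+1}-y_j)})$. So every phase is $2\lambda$ times a sum of disjoint gaps, and your ``at least $4$ per factor'' comparison with the single pair spanning those gaps then does hold.
\item The $k$-th term is $O\bigl((C/|\lambda|)^k e^{2(x_N-x_0)|\operatorname{Im}\lambda|}/k!\bigr)$, so only finitely many orders need to be integrated by parts. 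These integrations are over simplices, include diagonal boundary terms, and use only $C^{M_k}$ regularity.
\item You also need $\det S_V(-\lambda)=D(\lambda)/D(-\lambda)$. The factor $D(-\lambda)$ is not rational, but it is $1+O(|\lambda|^{-1})$ in $\operatorname{Im}\lambda\le0$, so it does not affect $\ell_V$.
\end{itemize}

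\emph{The paper's route.} It uses the $2\times 2$ structure \eqref{eq:403}--\eqref{eq:405}.
\begin{itemize}
\item The $f_\pm^V$ are built from $R_V(-\lambda)$, which lies on the physical sheet for $\operatorname{Im}\lambda\le 0$, so $|f_\pm^V|\le C/|\lambda|$.
\item Consequently $\det S_V(-\lambda)=1+\frac{1}{4\lambda^2}\rho_-(-\lambda)\rho_+(-\lambda)+O(|\lambda|^{-1})$, and all multiple scattering is absorbed into bounded quantities.
\item Lemma~\ref{lem:4.3}, via the derivative expansions of Lemma~\ref{lem:4.2}, makes each $\rho_\pm$ a \emph{single} sum over the $x_p$. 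Their product therefore gives exactly the pair terms \eqref{eq:407}, and products of pairs never appear.
\end{itemize}

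Finally, fix the sign of the phase. On $\operatorname{Im}\lambda=-t\log|\lambda|$ it is $e^{+2i\lambda(x_l-x_m)}$ that has modulus $|\lambda|^{2(x_l-x_m)t}$; your $e^{-2i\lambda(x_l-x_m)}$ decays there.
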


The proof, given in Section \ref{sec:4.3}, will actually give an algorithm for computing every discontinuity of $\ell_V'$. This result gives a more refined picture of the correspondence between the singularities of $V$, the discontinuities of $\ell_V'$, and the location and density of the strings of resonances. 

There is a natural dynamical interpretation of the form of these discontinuities (hence locations of strings of resonances) with connections to what has been observed in other settings. We discuss this more in the next section, but it is interesting to note here that $t_0$ corresponding to the string closest to the real axis has a different form than the ones deeper in the plane. This agrees with observations of \cite{9}, \cite{3}
 in the parallel context of many $\delta$-function potentials on the line. There, the authors determine the location of this so-called \emph{dominant string} of resonances by characterizing it as the minimal slope of a certain Newton polygon defined directly in terms of the parameters, see \cite[Definition 3]{3}. Here, we characterize the location of the dominant string as the first discontinuity of $\ell_V'$, providing a direct connection to the asymptotic behavior of the scattering determinant. 

 Finally, as mentioned briefly above, for our final result we will explicitly construct a potential producing infinitely many strings of resonances along distinct logarithmic curves, each with a positive linear density (Theorem \ref{thm:5.3}).

\subsection{Relation to existing work}

Section \ref{sec:2} provides the proofs of Theorems \ref{thm:1.2}, \ref{thm:1.3}, and \ref{thm:1.4}, and parallels the theory of functions of completely regular growth, \cite[Ch.'s II and III]{17}, \cite{1}, and \cite{2}. In this context, our Proposition \ref{prop:2.4} is a direct analogue of \cite[(3.04)]{17} and \cite[Theorem 3]{2}, $\ell_f$ plays the role of an \emph{indicator function} (see \cite[(1.64)]{17} and \cite[Section 1, Definition 1]{2}), \eqref{eq:104} provides us with sufficient regularity of growth, and Theorem \ref{thm:1.3} completes the analogy with \cite[Ch. III, Section 3, Theorem 3]{17} and \cite[Theorem 4]{2}. 
Since logarithmic curves do not fit the hypotheses of curves of regular rotation, we must build the appropriate tools from scratch; some aspects of our version are technically simplified while others are more complicated than their counterparts in \cite{17}, \cite{1}, \cite{2}. This general framework is built up with the standard odd dimensional potential scattering in mind since this is the setting of our applications, but we remark that it is likely possible to adapt our methods to accommodate more general black box operators in any dimension. See the remarks at the end of Section \ref{sec:2}, and see \cite{23}, \cite[Chapter 4]{10} for information on resonances in black box scattering. Finally, we mention here that the theory of functions of completely regular growth has been used previously in the study of resonances, most notably by Christiansen in \cite{6} to describe the distribution of resonances in sectors. 

Our results on the class $\mathcal V$ sharpen and generalize recent results of \cite{7} on resonances in dimension one. By further refining the integral representations for the entries of the scattering matrix used in \cite{11}, \cite{22}, and \cite{7}, we obtain a precise description of the asymptotic behavior of the scattering determinant along logarithmic curves. This allows us to apply Theorem \ref{thm:1.3} and prove Theorems \ref{thm:1.6}, \ref{thm:1.7}, and \ref{thm:1.8}. Whereas \cite{7} uses Hardy's method to give some explicit asymptotic expressions for the resonances in the case of two or three singularities, we focus here on the structure of more general problems and on better understanding the connections between the nature of the singularities of $V$ and the location of the strings of resonances. We remark that for a generic choice of parameters, it is possible to apply the simplest version of Hardy's method, \cite[Lemma A.1]{7} to (\ref{eq:413}) to explicitly compute the asymptotic location of the resonances. The point with our alternative method is the ease with which we deal with the more complicated scenarios, and more importantly the connection to the asymptotic behavior of the scattering determinant. 

Potentials in $\mathcal V$ have similarities with one dimensional conormal distributions, for which a detailed study of propagation of singularities and applications to resonance-free regions may be found in \cite{13} and \cite{14} respectively. In this context, the structure of the discontinuities of $\ell_V'$ described in Theorem \ref{thm:1.8}, or equivalently the $t$ for which there is a string of resonances asymptotic to $\set{\operatorname{Im} \lambda = -t\log(1 + \abs{\lambda}), \operatorname{Re} \lambda > 0}$ is natural from a dynamical perspective and is connected to the notion of \emph{diffraction of singularities}. The denominators are differences of the lengths traversed by a wave that travels from one singularity to another and back, and the numerators involve the $\nu_k$, which quantify the amount of the wave that passes through the corresponding singularity. This structure is similar to what has been observed in other settings, including semiclassical resonances for many $\delta$-function potentials on the line, \cite{9}, \cite{3} (where the structure is very similar to ours - see below), and manifolds with conic singularities, \cite{15} (where the numerator depends upon the dimension of the manifold, and the denominator is the length of the geodesic connecting two cone points). 

We now give a more detailed comparison of our results on the class $\mathcal V$ to the results of \cite{9}, \cite{3}. As mentioned above, these papers study semiclassical resonances for operators of the form 
\begin{equation*}
    -h^2 \frac{d^2}{dx^2} + \sum_{k=0}^{N} V_k \delta\left(x - x_k \right)
\end{equation*}
where $x_{0} < x_{1} < \cdots < x_{N}$ and each $V_k = C_kh^{1 + \beta_{k}}$ for some $C_k \in \R/\set{0}$ and $\beta_{k} > 0$. It is shown in \cite{9} that resonances occur on finitely many lines of the form 
\begin{equation*}
    \operatorname{Im}z \sim - \gamma h\log{\frac{1}{h}} \quad \gamma > 0,
\end{equation*}
and the form of these $\gamma$ are determined in terms of the $x_{k}, \beta_{k}$, see \cite[Theorem 7]{9}. An upper bound of $2^{N} - 1$ on the possible number of distinct strings of resonances is given in \cite{9}; using some additional assumptions, \cite{3} refined the description of the possible $\gamma$, and used this refinement to sharpen the bound on the number of strings to $N$. 

Comparing to our results on potentials in $\mathcal V$, we note that our description for the location of the strings of resonances in Theorem \ref{thm:1.8} is more precise than the analogue in \cite{9}, but differs in form from the refinement in \cite{3}. This difference is due to the use in \cite{3} of a simplifying assumption on the choice of parameters (see \cite[Assumption 1]{3}) to avoid certain complicated situations. We make no such assumption, and our method deals with the analogous situation in a systematic way. The tradeoff, however, is that without an analogue of \cite[Assumption 1]{3}, we could not adapt the method of \cite{3} to obtain the more precise description of the locations of the strings of resonances. This is the reason our bound of $N(N+1)/2$ for the number of strings improves over the $2^{N}-1$ bound of \cite{9}, but does not match the bound of $N$ given by \cite{3}. However, we remark that we have verified directly the bound of $N$ for any relationship of the parameters up to $N = 6$, and we conjecture that $N$ is in fact the optimal bound in both settings, without restriction on the parameters. 

Both \cite{9} and \cite{3} prove their results by characterizing the locations of the strings of resonances in terms of the slopes of a Newton polygon defined directly in terms of the parameters $x_{k}, \beta_{k}$. Our interpretation of the strings in terms of $\ell_V$ has the benefit of capturing the location and density of the strings of resonances simultaneously, and provides a direct connection to the scattering determinant. In addition, we emphasize that $\ell_V$ is a natural object defined and relevant to the distribution of resonances for \emph{any} $V \in L_{c}^{\infty}$ in any odd dimension, not just for the class $\mathcal{V}$. Finally, we mention that the analogue of Theorem \ref{thm:1.7} is not known in the context of \cite{9}, \cite{3} (see \cite[Conjecture 1]{3}). Moreover, our results give some evidence of a positive answer to \cite[Conjecture 2]{3}: In the setting of \cite{3}, when $\operatorname{Re}z \to \infty$ with $h$ fixed, all singularities are of equal strength since they are just $\delta$-function singularities at the various points. The analogue in our setting is to take all $\nu_k$ equal and Theorem \ref{thm:1.8} shows that in this case there is indeed exactly one string of resonances, corresponding to the contribution of the endpoints $x_{0}, x_{N}$.

Many recent papers have explored the connection between certain singular behaviors for the scattering system and the production of one or more strings of resonances along logarithmic curves; in addition to the results of \cite{9}, \cite{3}, \cite{15}, already discussed above, we also mention \cite{4} (finding multiple strings of resonances for scattering by analytic obstacles, one having a corner), \cite{8} (finding a logarithmic string of resonances for a non-trapping surface of revolution with a cone point and a funnel), and see also related results in \cite{12}. We refer to \cite[Section 1.2]{14} for a more detailed overview. Comparatively, our results are very precise, exhibiting explicit correspondences between the nature of the singular behavior of the system and the location of the strings of resonances, as well as providing a direct connection to the scattering determinant. Moreover, to our knowledge, the construction of Section \ref{sec:4} provides the first example of a single system producing an infinite number of strings of resonances along distinct logarithmic curves. 

\noindent \textbf{Acknowledgments.} The results in this paper originate in the many discussions with Tanya Christiansen during and since our collaboration in \cite{7}. It is a pleasure to thank her for her generous advice and for several comments which have improved the exposition. We also thank Dan Cunningham for logistical help, and Ben Jeffers, Tian An Wong, and the Prison Mathematics Project for handling the typing of the original manuscript of this paper.

\section{DISTRIBUTION OF ZEROS OF HOLOMORPHIC FUNCTIONS}\label{sec:2}

In this section, we follow ideas of \cite[Ch.'s II and III]{17}, \cite{1}, \cite{2} to study the distribution of zeros in logarithmic neighborhoods of the real axis for the functions $f\in\mathcal{F}$ from Definition \ref{def:1.1} and prove Theorems \ref{thm:1.2} and \ref{thm:1.3}. We will then show that for any $V\in L_c^\infty(\R^n;\C)$, $n\geq 1$ odd, the scattering determinant, perhaps multiplied by a rational function, satisfies these hypotheses. Using Corollary \ref{cor:2.5}, the characterization of resonances as the zeros of $\det S_V(-\lambda)$ in $\operatorname{Im} \lambda\leq 0$ (see \cite[Theorems 2.14 and 3.45]{10}) then completes the proof of Theorem \ref{thm:1.4} as well. 

Recall that Definition \ref{def:1.1} says that $f\in\mathcal{F}$ provided for some integer $\rho\geq 1$, $f$ is holomorphic and of at most order $\rho$ and finite type in $-\pi\leq \arg z\leq 0$, and satisfies

\begin{equation*}
    \int_0^r \frac{f'(s)}{f(s)} ds =O\left(r^{\rho-1}\right), \quad\text{as }r\to \pm\infty \tag{2.1} \label{eq:201}
\end{equation*}

As mentioned above, our results are motivated by the theory of entire functions of completely regular growth along curves of regular rotation (see \cite{17}, \cite{1}, \cite{2}), and hypotheses such as those in Definition \ref{def:1.1} are natural in this context (in fact, ours are somewhat more general in that we allow certain functions holomorphic only in a half-plane).

Notice that (\ref{eq:201}) implies the bound 
\begin{equation*}
    \abs{\log \abs{f(x)}} \leq C\left(1 + \abs{x} \right)^{\rho-1}, \quad x \in \R \tag{2.2} \label{eq:202}
\end{equation*}
A simple application of the Phragm\'{e}n-Lindel\"{o}f principle in some sector $\arg z \in (-\varepsilon, 0)$ shows that any $f \in \mathcal F$ satisfies 
\begin{equation*}
    \log \abs{f(z)} \leq C_1 \abs{z}^{\rho -1}\abs{\operatorname{Im} z} + C_2 \abs{z}^{\rho-1}, \quad \arg z \in (-\varepsilon, 0), \abs{z} > C_{0},
\end{equation*}
and thus the function $\ell_f$ from \eqref{eq:102} is well-defined.

By analogy with \cite[(1.64)]{17} and \cite[Definition 1]{2}, we call $\ell_f$ the \emph{logarithmic indicator function}. As explained in the introduction, $\ell_f$ can be used to describe the distribution of zeros of $f$ in logarithmic neighborhoods of the real axis. Before giving results, we first note that the following gives a smooth parametrization of the regions $\set{0 \geq \operatorname{Im}z \geq -t_0 \log(1 + \abs{z}), \operatorname{Re} z > 0, \abs{z} > r_0}$ for any fixed $t_0, r_0 > 0$: 
\begin{equation*}
    z(r, t) := re^{i\theta_t(r)}, \quad \theta_t(r) := \sin^{-1}\left(\frac{-t\log(1 + r)}{r} \right), \quad t \in [0, t_0], r > r_0.
\end{equation*}

We now prove the following property of $\ell_f$:

\begin{lm}\label{lem:2.1}
    Let $f \in \mathcal F$. Then $\ell_f(0) = 0$ and $\ell_f$ is convex on $(0, \infty)$; in particular, it is continuous and has everywhere a right and left derivative which agree at all but countably many points. 
\end{lm}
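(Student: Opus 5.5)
The claim $\ell_f(0)=0$ follows directly from \eqref{eq:202}. The convexity I would obtain from a Phragm\'en--Lindel\"of comparison in the thin region between two logarithmic curves, using an explicit harmonic majorant that grows linearly in $t$ along the curves.

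\emph{The value at $0$.} The curve with $t=0$ is the positive real axis. By \eqref{eq:202}, $\abs{\log\abs{f(x)}}\le C(1+x)^{\rho-1}$, so dividing by $x^{\rho-1}\log x$ gives a quantity of absolute value $O(1/\log x)\to0$. Hence the $\varlimsup$ in \eqref{eq:102} equals $0$. Note also that the bound after \eqref{eq:202} gives $\ell_f(t)\le C_1t<\infty$ for every $t$. If $\ell_f(t)=-\infty$ for some $t$, the convexity argument below would propagate this, so I would treat that degenerate case separately or check that it cannot occur.

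\emph{Harmonic comparison functions.} Fix $0<t_1<t<t_2$ and $\varepsilon>0$. Write $z=x+iy$ with $x$ large and $\abs{y}=O(\log x)$ on the region
\[
D=\set{-t_2\log\abs{z}\le \operatorname{Im}z\le -t_1\log\abs{z},\ \operatorname{Re}z>R_0}.
\]
I use two harmonic functions on $D$:
\begin{itemize}
\item $u_1=\operatorname{Re}\bigl(z^{\rho-1}\log z\bigr)=x^{\rho-1}\log x\,(1+o(1))$, where the errors are $O(x^{\rho-2}\log^2x)$;
\item $u_2=-\rho^{-1}\operatorname{Im}(z^\rho)=x^{\rho-1}(-y)+O(x^{\rho-3}\abs{y}^3)$.
\end{itemize}
On the curve $\operatorname{Im}z=-s\log\abs{z}$ we therefore have $Au_1+Bu_2=(A+Bs)\,x^{\rho-1}\log x\,(1+o(1))$, and $\abs{z}^{\rho-1}\log\abs{z}=x^{\rho-1}\log x\,(1+o(1))$ there as well. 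Choose $A,B$ so that $A+Bt_j=\ell_f(t_j)+\varepsilon$ for $j=1,2$. Set
\[
H=Au_1+Bu_2+\varepsilon u_1 .
\]
Then for $R_0$ large, $\log\abs{f}\le H$ on both curved boundary pieces of $D$. On the compact segment $\set{\operatorname{Re}z=R_0}$ we get $\log\abs{f}\le H+K$ for some constant $K$, after adjusting by a bounded harmonic term, or simply by $K$ itself.

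\emph{Phragm\'en--Lindel\"of step.} The function $\log\abs{f}-H-K$ is subharmonic in $D$ and is $\le0$ on $\partial D$. It is a priori bounded above by $C\abs{z}^{\rho-1}\log\abs{z}$, using the displayed bound after \eqref{eq:202} together with $\abs{H}=O(\abs{z}^{\rho-1}\log\abs{z})$.

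This step is where I expect the main obstacle: justifying the maximum principle on the unbounded region $D$. The region has width only $O(\log x)$, so it is far thinner than any sector, and Phragm\'en--Lindel\"of there tolerates growth of order $\exp(cx/\log x)$. I would make this concrete in the simplest way: subtract $\delta\operatorname{Re}(z^{\rho})$, which is harmonic and $\sim\delta x^\rho$ on $D$, hence dominates the polynomial a priori bound. The function $\log\abs{f}-H-K-\delta\operatorname{Re}(z^\rho)$ is then $\le0$ on $\partial D$ and tends to $-\infty$ as $x\to\infty$ in $D$, so the maximum principle on truncations $D\cap\set{\operatorname{Re}z<R}$ gives $\log\abs{f}\le H+K+\delta\operatorname{Re}(z^\rho)$ in $D$. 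Letting $\delta\to0$ yields $\log\abs{f}\le H+K$ throughout $D$.

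\emph{Conclusion.} On the curve $\operatorname{Im}z=-t\log\abs{z}$ this gives
\[
\frac{\log\abs{f(z)}}{\abs{z}^{\rho-1}\log\abs{z}}\le (A+Bt)+O(\varepsilon)+o(1),
\]
so $\ell_f(t)\le \frac{t_2-t}{t_2-t_1}\ell_f(t_1)+\frac{t-t_1}{t_2-t_1}\ell_f(t_2)+O(\varepsilon)$. Letting $\varepsilon\to0$ gives convexity on $(0,\infty)$; the same argument with $t_1=0$ covers intervals starting at $0$, since the $t=0$ boundary is the real axis, where \eqref{eq:202} controls $\log\abs f$. Continuity and the existence of one-sided derivatives agreeing off a countable set are then the standard properties of finite convex functions.
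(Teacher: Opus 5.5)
Your proof is correct and is essentially the paper's. Up to relabeling (the paper's $A$ is your slope $B$, and its $-B$ is your intercept $A$) and the size of the slack, your majorant $H$ is exactly $-\log$ of the modulus of the factor $\exp\bigl((B-\delta)z^{\rho-1}\log z-i\tfrac{A}{\rho}z^{\rho}\bigr)$ by which the paper multiplies $f$ before applying Phragm\'en--Lindel\"of in the region between the two logarithmic curves.

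Your explicit justification of that step, via the a priori bound $\log\abs{f}\le C\abs{z}^{\rho-1}\log\abs{z}$ in the thin region plus the $\delta\operatorname{Re}(z^{\rho})$ correction, fills in what the paper asserts in one line. The finiteness $\ell_f>-\infty$ that you flag is also taken for granted in the paper. It can be recovered from the Jensen-type formula of Proposition~\ref{prop:2.4}, which does not use this lemma and, since $n\ge 0$, forces $\ell_f\ge 0$.
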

\begin{proof}
    That $\ell_f(0) = 0$ follows from (\ref{eq:202}). Fix $s, t$ with $0 \leq s < t < \infty$. Then convexity on $[s, t]$ is equivalent to showing that 
    \begin{equation*}
        \ell_f(u) \leq Au - B, \quad u \in [s, t],
    \end{equation*}
    where 
    \begin{equation*}
        A = \frac{\ell_f(t) - \ell_f(s)}{t-s}, \quad B = \frac{s\ell_f(t) - t\ell_f(s)}{t-s}.
    \end{equation*}

    Let $\delta > 0$ be given and define 
    \begin{equation*}
        \varphi_{\delta}(z) := \exp \left((B - \delta)z^{\rho-1} \log z - i \frac{A}{\rho}z^\rho \right)f(z)
    \end{equation*}
    which is holomorphic in $\set{\arg z \in [-\varepsilon, 0], \abs{z} > 0}$, for small $\varepsilon > 0$. Notice that 
    \begin{equation*}
        \varlimsup_{r \to \infty} \frac{\log \abs{\varphi_{\delta} \left(re^{i \theta_{u}(r)} \right)}}{r^{\rho-1} \log{r}} = \ell_f(u) - Au + B - \delta. \tag{2.3} \label{eq:203}
    \end{equation*}
    In particular, $\varphi_\delta\left(re^{i\theta_{s}(r)} \right) \to 0$, $\varphi_\delta\left(re^{i\theta_{t}(r)} \right) \to 0$ as $r \to \infty$, and by the Phragm\'{e}n-Lindel\"{o}f principle, we conclude that $\varphi_\delta(z)$ is uniformly bounded in the region 
    \[
    \set{-s \log(1 + \abs{z}) \geq \operatorname{Im}z \geq -t\log(1 + \abs{z}), \operatorname{Re}z > C_{\delta}}.
    \]
     From (\ref{eq:203}), then, we see that $\ell_f(u) \leq Au - B + \delta$ for all $u \in [s, t]$, and since $\delta$ is arbitrary, the lemma is proved. 
\end{proof}

Next we prove two technical lemmas that we use in the proofs of the main results of this section. Note that both describe certain regular growth properties for a function $f \in \mathcal F$. 

\begin{lm}\label{lem:2.2}
Let $f \in \mathcal F$. Then for any $\varepsilon > 0$, there exists $C_{\varepsilon}$ and a set $E_{\varepsilon} \in (0, \infty)$ of finite measure such that 
\begin{equation*}
    \abs{\log{\abs{f(z)}}} \leq C_{\varepsilon}\abs{z}^{\rho + \varepsilon}
\end{equation*}
as $\abs{z} \to \infty, \operatorname{Im}z \leq 0, \abs{z} \notin E_{\varepsilon}$. 
\end{lm}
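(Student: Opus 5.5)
The upper bound $\log\abs{f(z)}\le C\abs{z}^{\rho}$ in the closed lower half-plane is immediate from order $\rho$ and finite type, so the content of the lemma is a lower bound for $\log\abs{f}$ away from an exceptional set of radii. The model is the classical minimum-modulus theorem for functions of finite order (Cartan's lemma / Levin, Ch.~I). Since $f$ is only holomorphic in the half-plane, I will work through a canonical representation for the half-plane instead of a Hadamard product. The order of steps is: map to the disc, factor, apply Cartan's estimate, then translate the exceptional set back to radii.

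First I move to a disc setting. Set $w=\frac{z+i}{z-i}$ or, more conveniently, use the Nevanlinna-type (Carleman / Krein) representation for functions holomorphic in the closed lower half-plane. Concretely, I write
\[
\log\abs{f(z)}=\sum_{\operatorname{Im} z_k<0}\log\abs{\frac{z-z_k}{z-\bar z_k}}+P[\log\abs{f}](z)+(\text{term }\sigma\abs{\operatorname{Im} z}\text{-type from growth}),
\]
valid after truncating to half-discs $\set{\abs{z}<R,\operatorname{Im} z<0}$ with $R$ comparable to $\abs{z}$. Here $P$ denotes the Poisson integral of the boundary values. On the real-axis portion of the boundary, (\ref{eq:202}) gives $\abs{\log\abs{f(x)}}\le C(1+\abs{x})^{\rho-1}$, so that piece contributes $O(\abs{z}^{\rho})$ in absolute value. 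On the semicircle $\abs{\zeta}=2\abs{z}$, I control $\int\abs{\log\abs{f}}$ by $O(\abs{z}^{\rho})$ via the standard argument: the upper bound $\log^+\abs{f}\le C\abs{\zeta}^\rho$ together with Jensen/Carleman's formula in the half-disc, normalized at an interior point where $\log\abs{f}$ is bounded below, for instance a point near the real axis where (\ref{eq:202}) plus Harnack applies. Carleman's formula also shows that the number of zeros in the half-disc of radius $2\abs{z}$ is $O(\abs{z}^{\rho+\varepsilon/2})$, with the weights $\operatorname{Im}$ handled by the same Carleman identity.

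The main step is bounding the Blaschke-type sum from below. I will apply Cartan's lemma to the polynomial $\prod(z-z_k)$ over the $N(R)=O(R^{\rho+\varepsilon/2})$ zeros in $\abs{z_k}<2R$. Outside a union of discs with sum of radii $\le R^{1-\varepsilon'}$ hmm — more carefully, with sum of radii $\le h$, this gives $\sum\log\abs{z-z_k}\ge N\log(h/e)$. The denominators satisfy $\abs{z-\bar z_k}\le 4R$. Choosing $h=R^{-2}$ on each dyadic shell $R=2^j$ yields
\[
\sum\log\abs{\frac{z-z_k}{z-\bar z_k}}\ge -C N(R)\log R\ge -C_\varepsilon R^{\rho+\varepsilon}.
\]
The projection of the exceptional discs to the radial variable has measure $\le 2\sum_j 2^{-2j}$ per shell after rescaling. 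Summing over shells gives a set $E_\varepsilon$ of finite measure. Zeros farther away, with $\abs{z_k}\ge 2R$, are absorbed by the half-disc truncation.

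I expect the main obstacle to be the half-plane analogue of the Poisson–Jensen normalization. One must bound $\int\log^-\abs{f}$ on the large semicircles, so a lower bound for $\log\abs{f}$ at a reference point is needed. Since $f$ is not entire, I will obtain this from (\ref{eq:202}) on the real axis. Combining it with the Carleman formula for the half-plane, whose boundary term on $\R$ is controlled by (\ref{eq:202}) and (\ref{eq:201}), gives $\int_{-\pi}^{0}\abs{\log\abs{f(Re^{i\theta})}}\sin\theta\,d\theta=O(R^\rho)$. The weight $\sin\theta$ near the real axis must then be removed. I will do this by using circles centered slightly below the axis, or by using the real-axis bound directly in a thin strip.
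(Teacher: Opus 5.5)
\textbf{The gap is the zero count.} Your Cartan step needs the \emph{unweighted} number $N(R)$ of zeros of $f$ in $\set{\abs{z}<2R,\ \operatorname{Im} z\le 0}$ to be $O(R^{\rho+\varepsilon/2})$. You attribute this to Carleman's formula, but Carleman's formula does not give it. It only controls $\sum_{\abs{z_k}<R}\bigl(\abs{z_k}^{-1}-\abs{z_k}R^{-2}\bigr)\abs{\sin\arg z_k}$. The weight $\abs{\sin\arg z_k}\approx\abs{\operatorname{Im} z_k}/\abs{z_k}$ makes it blind to zeros near the real axis, which are exactly the zeros this paper is about.

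In fact finite order together with \eqref{eq:202} cannot give the count at all. Take $z_{k,j}=k+je^{-2k}-ie^{-3k}$ for $1\le j\le e^{k}$, and let $B(z)=\prod_{k,j}(z-z_{k,j})/(z-\overline{z_{k,j}})$ be the lower half-plane Blaschke product. It is holomorphic near $\set{\operatorname{Im} z\le 0}$, with $\abs{B}\le 1$ there and $\log\abs{B}=0$ on $\R$, so every term in Carleman's formula stays bounded. Yet $B$ has on the order of $e^{r}$ zeros of modulus $<r$. What excludes this in $\mathcal F$ is hypothesis (ii), i.e.\ \eqref{eq:201}, which controls $\arg f$ on $\R$; for $B$ the argument drops by $2\pi$ across each zero. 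You invoke \eqref{eq:201} only inside Carleman's formula, whose real-axis term involves $\log\abs{f}$ and never $\arg f$.

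The count has to come from the argument principle on the lower half-annulus. Use $\partial_\theta\arg f(te^{i\theta})=t\,\partial_t\log\abs{f(te^{i\theta})}$ on the arcs and \eqref{eq:201} on the two real segments. This gives
\[
2\pi\int_{t_0}^{r}\frac{N(t)}{t}\,dt=\int_{-\pi}^{0}\log\abs{f(re^{i\theta})}\,d\theta+O\bigl(r^{\rho-1}\log r\bigr)\le Cr^{\rho},
\]
hence $N(r)\le(\log 2)^{-1}\int_r^{2r}N(t)\,dt/t=O(r^{\rho})$. This is the fact the paper imports from \cite[(2-2)]{6}.

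\textbf{With that count supplied, the rest of your route is sound and genuinely different from the paper's.} The paper forms $F=f\,\overline{P(\bar z)}/P$, with $P$ the genus-$\rho$ Weierstrass product over the zeros, so that $F$ is zero-free with $\abs{F}=\abs{f}$ on $\R$. It then obtains $\abs{\log\abs{F}}\le C\abs{z}^{\rho+\varepsilon}$ from the maximum principle and Levin's representation of zero-free functions in a half-plane. Its exceptional radii come from the discs $D(z_j,(1+\abs{z_j})^{-\rho-\varepsilon})$ in the product estimates. Your half-disc Poisson--Jensen formula plus Cartan's lemma replaces the half-plane representation theorem by elementary estimates.

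Two of your concerns are moot:
\begin{enumerate}[label=\roman*)]
\item The Green function of $\set{\abs{w}<R,\operatorname{Im} w<0}$ carries the extra factor $\abs{(R^2-z_kz)/(R^2-\overline{z_k}z)}$. This is $\ge 1/3$ for $\abs{z}\le R/2$, so it costs only $N\log 3$.
\item The half-disc Poisson kernel on the arc is the disc kernel minus its reflection, namely $\frac{(R^2-\abs{z}^2)\,4\operatorname{Im}\zeta\,\operatorname{Im} z}{2\pi\abs{\zeta-z}^2\abs{\zeta-\bar z}^2}\,d\phi\le\frac{16}{\pi}\abs{\sin\phi}\,d\phi$ for $\abs{z}\le R/2$. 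So the $\sin\theta$-weighted bound from Carleman's formula is exactly what is required, and no weight has to be removed. That bound uses \eqref{eq:202} on $\R$, so no Harnack normalization is needed either.
\end{enumerate}
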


\begin{proof}
    We first note that using for instance \cite[(2-2)]{6}, it follows that for any $f \in \mathcal F$, the number of zeros in $\operatorname{Im}z \leq 0$ with $\abs{z} < r$ is $\leq Cr^{\rho}$ for some $C$. We define 

\begin{equation*}\tag{2.4}\label{eq:204}
    \begin{aligned}
        F(z) := &\; f(z)\frac{\overline{P(\overline{z})}}{P(z)}, \\
        P(z) = \prod_{\set{z_j \colon f(z_j)=0}} E_p\left( \frac{z}{z_j} \right), &\quad E_p(z) = (1-z)\exp\left(z + \frac{z^2}{2} + \cdots + \frac{z^\rho}{\rho} \right)
    \end{aligned}
\end{equation*}

       Standard estimates on the Weirstrass product $P(z)$ (see, e.g., \cite[Section D.2]{10}) show that for any $\varepsilon > 0$, 
       \begin{equation*}\tag{2.5}\label{eq:205}
       \begin{aligned}
           \abs{\log{\abs{P(z)}}} \leq C_{\varepsilon} \abs{z}^{\rho + \varepsilon}, & \quad \operatorname{Im}z \leq 0, z \notin \bigcup_{\set{z_j \colon f(z_j) = 0}} D\left( z_j, (1 + \abs{z_j})^{- \rho - \varepsilon} \right),\\ \
           \abs{\log{\abs{\overline{P(\overline{z})}}}} \leq C_{\varepsilon} \abs{z}^{\rho + \varepsilon}, & \quad \operatorname{Im}z \leq 0
       \end{aligned}
      \end{equation*}  
    and thus 
    
    \begin{equation*}
        \log{\abs{F(z)}} \leq C_{\varepsilon} \abs{z}^{\rho + \varepsilon}, \quad \operatorname{Im} z \leq 0, z \notin  \bigcup_{\set{z_j \colon f(z_j) = 0}} D\left( z_j, (1 + \abs{z_j})^{- \rho - \varepsilon} \right).
    \end{equation*}
    By the fact that $\abs{\log{\abs{F(t)}}} = \abs{\log{\abs{f(t)}}} \leq C(1 + \abs{t})^{\rho-1}$ on $\R$, and application of the maximum principle, we see that actually 
    \begin{equation*}
        \log{\abs{F(z)}} \leq C_{\varepsilon}\abs{z}^{\rho + \epsilon}
    \end{equation*}
    in all of $\operatorname{Im}z \leq 0$. In particular, $F$ is of finite order in $\operatorname{Im}z \leq 0$. Using the fact that $F$ has no zeros in $\operatorname{Im}z \leq 0$, the representation of a function holomorphic in a half-plane \cite[Appendix VIII, Section 2, (3)]{17} shows that in fact, 
    \begin{equation*}
        \abs{\log{\abs{F(z)}}} \leq C_{\varepsilon} \abs{z}^{\rho + \varepsilon}, \quad \operatorname{Im} z \leq 0, \tag{2.6} \label{eq:206}
    \end{equation*}
    and thus from (\ref{eq:204}), (\ref{eq:205}), and (\ref{eq:206}), 
    \begin{equation*}
        \abs{\log{\abs{f(z)}}} \leq C_{\varepsilon} \abs{z}^{\rho + \varepsilon}, \quad \operatorname{Im}z \leq 0, z \notin \bigcup_{\set{z_j \colon f(z_j) = 0}} D\left(z_j, (1 + \abs{z_j})^{- \rho - \varepsilon}\right).
    \end{equation*}
    Using that the number of zeros of modulus $< r$ is of order $r^\rho$, it is easy to verify that the set 
    \begin{equation*}
        E_{\varepsilon} := \set{r > 0 \colon r = \abs{z} \text{ for some } \operatorname{Im} z \leq 0, z \in \bigcup_{\set{z_j \colon f(z_j) = 0}} D\left(z_j, (1 + \abs{z_j})^{- \rho - \varepsilon}\right)}
    \end{equation*}
    has finite measure, and the lemma follows. 
\end{proof}

The next lemma gives control over integrals involving $\log{\abs{f}}$:

\begin{lm}\label{lem:2.3}
    Let $f \in \mathcal F$. Then for any $r_0 > 0, \varepsilon >0$, there exists $C > 0$ such that 
    \begin{equation*}
        \int_{r_0}^{r} \frac{\abs{\log{\abs{f(ue^{i \theta})}}}}{u} du \leq Cr^{\rho + \epsilon} \tag{2.7} \label{eq:207}
    \end{equation*}
    for $r \geq r_0$, and uniformly in $\theta \in [-\pi, 0]$. 
\end{lm}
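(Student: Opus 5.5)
We reuse the factorization from the proof of Lemma \ref{lem:2.2}: write
\[
f=F\cdot\frac{P(z)}{\overline{P(\overline{z})}},
\]
with $F$ zero-free in $\operatorname{Im} z\le 0$. By \eqref{eq:206}, $\abs{\log\abs{F(z)}}\le C_\varepsilon\abs{z}^{\rho+\varepsilon}$ everywhere in the closed lower half-plane, and the same bound holds for $\log\abs{\overline{P(\overline z)}}$ by \eqref{eq:205}. These two factors therefore contribute at most
\[
\int_{r_0}^r C u^{\rho+\varepsilon-1}\,du\le C r^{\rho+\varepsilon},
\]
uniformly in $\theta$. The whole difficulty is then the Weierstrass product $P$, whose logarithm has singularities at the zeros $z_j$. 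These singularities are only logarithmic, hence integrable along rays, but the integral must be controlled uniformly in $\theta$, including rays passing through or near zeros.

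For $P$, write
\[
\log\abs{P(z)}=\sum_j \log\abs{E_\rho(z/z_j)}
\]
and split the sum at a given point $z=ue^{i\theta}$. For the zeros with $\abs{z_j}\ge 2u$ or $\abs{z_j}\le u/2$, the standard bounds $\abs{\log\abs{E_\rho(w)}}\le C\abs{w}^{\rho+1}$ for $\abs{w}\le 1/2$ and $\le C\abs{w}^{\rho}$ for $\abs{w}\ge 2$ apply. Together with the counting bound $n(r)\le Cr^\rho$ (from \cite[(2-2)]{6}), these give a contribution $O(u^{\rho+\varepsilon})$ with no exceptional set, exactly as in \cite[Section D.2]{10}. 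For the zeros in the annulus $u/2<\abs{z_j}<2u$, the polynomial part of $E_\rho$ contributes $O(1)$ per zero, so $O(u^\rho)$ in total. What remains is
\[
\sum_{u/2<\abs{z_j}<2u}\bigl|\log\abs{1-z/z_j}\bigr| .
\]

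The main obstacle is bounding
\[
\int_{r_0}^r \frac{1}{u}\sum_{u/2<\abs{z_j}<2u}\bigl|\log\abs{1-ue^{i\theta}/z_j}\bigr|\,du
\]
uniformly in $\theta$. I would interchange the sum and the integral. Each zero $z_j$ enters only for $u\in(\abs{z_j}/2,2\abs{z_j})$, and there, writing $u=\abs{z_j}s$,
\[
\abs{1-ue^{i\theta}/z_j}\ge \abs{1-s}\quad\text{and}\quad \abs{1-ue^{i\theta}/z_j}\le 3,
\]
the first because the distance from $se^{i\theta}$ to $e^{i\arg z_j}$ is at least $\abs{1-s}$. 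Hence
\[
\int_{\abs{z_j}/2}^{2\abs{z_j}}\frac{\bigl|\log\abs{1-ue^{i\theta}/z_j}\bigr|}{u}\,du\le \int_{1/2}^{2}\frac{\log 3+\bigl|\log\abs{1-s}\bigr|}{s}\,ds=C_0,
\]
an absolute constant independent of $\theta$ and $j$. Summing over the zeros with $\abs{z_j}<2r$ gives $C_0\, n(2r)\le Cr^\rho$.

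Combining the three pieces yields \eqref{eq:207}. Using $\abs{\log\abs{f}}\le\abs{\log\abs{F}}+\abs{\log\abs{P}}+\abs{\log\abs{\overline{P(\overline z)}}}$ at every point except the zeros themselves, a measure-zero set on each ray, the bound holds uniformly for $\theta\in[-\pi,0]$.
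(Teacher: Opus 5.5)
Your proof is correct. It follows the paper's overall strategy: reduce to $P$ via the factorization from Lemma \ref{lem:2.2}, use the inequality $|1-z/z_j|\ge |1-|z|/|z_j||$, integrate in $u$ so that each zero costs $O(1)$, and multiply by the zero count $O(r^\rho)$. Where you differ is in how you control $|\log|P||$.

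The paper splits the product at $|z_j|=\tau r$ with a fixed $\tau>2$. It bounds the tail by \cite[Ch.~I, Section 17, Lemma 8]{17} and the polynomial parts of the inner factors by H\"older. It uses the reverse triangle inequality only to get a \emph{lower} bound for the signed integral $\int_{r_0}^r \log|P(ue^{i\theta})|\,du/u$. It then passes to the absolute value using the pointwise upper bound $\log|P|\le C|z|^{\rho+\varepsilon}$ and the positive/negative-part decomposition \eqref{eq:212}--\eqref{eq:213}.

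You instead split relative to the moving annulus $u/2<|z_j|<2u$. There you have the two-sided estimate $|1-s|\le |1-se^{i\phi}|\le 3$, so you can bound $|\log|P||$ directly. Tonelli and the substitution $u=|z_j|s$ then give a uniform constant per zero. This avoids both the sign trick and the cited lemmas from \cite{17}, at the cost of a little more elementary case analysis for $E_\rho$ when $|w|\le 1/2$ and $|w|\ge 2$.

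A side benefit of your version is that the contribution of $P$ is visibly $O(r^\rho\log r)$. The $\varepsilon$-loss enters only through $F$ and $\overline{P(\overline{z})}$ via \eqref{eq:205}--\eqref{eq:206}, on which your argument and the paper's rely equally.
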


\begin{proof}
    The proof of Lemma \ref{lem:2.2} (see (\ref{eq:204}), (\ref{eq:205}), (\ref{eq:206})) shows that for $P(z)$ as defined in \eqref{eq:204}, we have 
    \begin{equation*}
        \abs{\log{\abs{f(z)}}} = \abs{\log{\abs{P(z)}}} + O\left( \abs{z}^{\rho + \varepsilon} \right)
    \end{equation*}
    in $\operatorname{Im}z \leq 0$, so it suffices to prove (\ref{eq:207}) with $f$ replaced by $P$. 

    For any $\tau > 0$ and $r = \abs{z}$,
    \begin{equation*}
        \log{\abs{P(z)}} = \operatorname{Re} \left(z \sum_{\abs{z_j} < \tau r} z_j^{-1} + \cdots z^{\rho} \sum_{\abs{z_j} < \tau r} z_j^{-\rho} \right) + \sum_{\abs{z_j} < \tau r} \log{\abs{1 - \frac{z}{z_j}}} + \sum_{\abs{z_j} \geq \tau r} \log{\abs{E_{\rho}\left( \frac{z}{z_j}\right)}}. \tag{2.8} \label{eq:208}
    \end{equation*}
    If we fix $\tau > 2$, then \cite[Ch. I, Section 17, Lemma 8]{17} shows that 
    \begin{equation*}
        \abs{\sum_{\abs{z_j} \geq \tau r} \log{\abs{E_{\rho}\left( \frac{z}{z_j}\right)}}} \leq C_{\tau}r^{\rho}. \tag{2.9} \label{eq:209}
    \end{equation*}
    Since the number of zeros $z_j$ of modulus $< r$ is of order $r^\rho$ (see the first part of the proof of Lemma \ref{lem:2.2}), by \cite[Ch. I, Section 4, Lemma 1]{17} we have $\sum_{\abs{z_j} < \tau r} \abs{z_j}^{-\rho - \varepsilon} \leq C_{\varepsilon}$ for any $\varepsilon$ and all $r$. Thus, using H\"{o}lder's inequality, for each $k \leq \rho$,
    \begin{equation*}
        \abs{\sum_{\abs{z_j} < \tau r} z_j^{-k}} \leq Cr^{\rho - k + \varepsilon}.
    \end{equation*}
    It follows that 
    \begin{equation*}
        \abs{\operatorname{Re}\left(z \sum_{\abs{z_j} < \tau r} z_j^{-1} + \cdots + z^{\rho} \sum_{\abs{z_j} < \tau r} z_j^{-\rho} \right)} \leq Cr^{\rho + \epsilon}. \tag{2.10} \label{eq:210}
    \end{equation*}
    Using (\ref{eq:208}), (\ref{eq:209}), and (\ref{eq:210}), we find 
    \begin{align*}
        \log{\abs{P(z)}} \geq &\; -Cr^{\rho + \varepsilon} + \sum_{\abs{z_j} < \tau r} \log{\abs{1 - \frac{z}{z_j}}}\\
        \geq &\; -Cr^{\rho + \varepsilon} + \sum_{\abs{z_j} < \tau r}\log{\abs{1 - \frac{r}{\abs{z_j}}}}.
    \end{align*}
    Since 
    \begin{equation*}
        \int_{r_0}^{r} \log{\abs{1 - \frac{\nu}{\abs{z_j}}}} \frac{d\nu}{\nu} = \int_{r_0/\abs{z_j}}^{r/\abs{z_j}} \log \abs{1 - u}\frac{du}{u} \geq \int_0^2 \log{\abs{1-u}}\frac{du}{u} = -\frac{7}{24}\pi^2,
    \end{equation*}
    integrating the above equality, we obtain uniformly for $\theta \in [-\pi, 0]$, 
    \begin{equation*}
        \int_{r_0}^r \frac{\log\abs{P\left(ue^{i\theta} \right)}}{u} du \geq -Cr^{\rho + \varepsilon}, \quad r > r_0.
    \end{equation*}
    Since also $\log{\abs{P(z)}} \leq C\abs{z}^{\rho + \varepsilon}$ in $\operatorname{Im}z \leq 0$, we have uniformly for $\theta \in [-\pi, 0]$, 
    \begin{equation*}
        \int_{r_0}^{r} \frac{\log{\abs{P\left(ue^{i\theta} \right)}}}{u} du = O(r^{\rho + \varepsilon}). \tag{2.11} \label{eq:211}
    \end{equation*}
    Now let $G_{\pm}^{\theta}$ be the set of $u > r_0$ on which $\pm \log{\abs{P\left(ue^{i\theta} \right)}} > 0$. Then 
    \begin{equation*}
        \int_{G_{+}^{\theta} \cap (r_0, r)} \frac{\log{\abs{P \left(ue^{i \theta} \right)}}}{u} du \leq C \int_{r_0}^r u^{\rho - 1 + \varepsilon} du \leq C'r^{\rho + \epsilon}, \tag{2.12} \label{eq:212}
    \end{equation*}
    uniformly in $\theta \in [-\pi, 0]$. Whence also 
    \begin{equation*}
        \int_{G_{-}^{\theta} \cap (r_0, r)} \frac{-\log{\abs{P \left(ue^{i \theta} \right)}}}{u} du = -\int_{r_0}^r \frac{\log{\abs{P \left(ue^{i \theta} \right)}}}{u} du + \int_{G_{+}^\theta \cap (r_0, r)} \frac{\log{\abs{P \left(ue^{i \theta} \right)}}}{u} du \leq Cr^{\rho + \varepsilon} \tag{2.13} \label{eq:213}
    \end{equation*}
    uniformly in $\theta \in [-\pi, 0]$, by (\ref{eq:211}), (\ref{eq:212}). Together (\ref{eq:212}), (\ref{eq:213}) prove (\ref{eq:207}) with $f$ replaced by $P$, and this concludes the proof. 
\end{proof}

Next we prove a Jensen-type formula that we will use to connect the logarithmic indicator function, $\ell_f$, to the distribution of zeros in logarithmic regions. Fix large $\varphi_0, u_0$ so that $\theta_{\varphi}(u)$ is well-defined in $\varphi \leq \varphi_0$, $u > u_0$. For $\varphi \in [0, \varphi_0]$ and $u > u_0$, set
\begin{align*}
    x(u, \varphi) := &\; u\cos{\theta_{\varphi}(u)} = \sqrt{u^2 - \varphi^2 \log^2{(1 + u})},\\
    y(u, \varphi) := &\; u\sin{\theta_{\varphi}(u)} = \varphi \log{(1 + u)}, 
\end{align*}
and note that we can use the Cauchy-Riemann equations to calculate that 
\begin{align*}
    \frac{\partial }{\partial u} \left[\arg f\left(ue^{i\theta_{\varphi}(u)} \right) \right] =&\; p(u, \varphi)\frac{\partial}{\partial u} \left[\log{\abs{f\left(ue^{i \theta_{\varphi}(u)} \right)}} \right] + q(u, \varphi)\frac{\partial}{\partial \varphi} \left[\log{\abs{f\left(ue^{i \theta_{\varphi}(u)} \right)}} \right]\\
    \frac{\partial }{\partial \varphi} \left[\arg f\left(ue^{i\theta_{\varphi}(u)} \right) \right] =&\; m(u, \varphi)\frac{\partial}{\partial u} \left[\log{\abs{f\left(ue^{i \theta_{\varphi}(u)} \right)}} \right] - p(u, \varphi)\frac{\partial}{\partial \varphi} \left[\log{\abs{f\left(ue^{i \theta_{\varphi}(u)} \right)}} \right] \tag{2.14} \label{eq:214}
\end{align*}
where 
\begin{equation*}
    p = \frac{y_{\varphi}y_{u} + x_{\varphi}x_{u}}{x_uy_\varphi - y_ux_{\varphi}}, \quad q = \frac{x_u^2 + y_u^2}{x_{\varphi}y_u - x_u y_{\varphi}}, \quad m = \frac{x_{\varphi}^2 + y_{\varphi}^2}{x_u y_\varphi - x_\varphi y_u} \tag{2.15} \label{eq:215}
\end{equation*}
(subscripts here denoting partial derivatives). We note that $q(u, \varphi) \sim \frac{1}{\log(1+u)}$ and therefore we expect 
\begin{equation*}
    J^r(\varphi) := \int_{r_0}^r q(u, \varphi)\log{\abs{f(ue^{i \theta_{\varphi}(u)})}}du \tag{2.16} \label{eq:216}
\end{equation*}
to have similarities with $\ell_f(\varphi)$. Finally, for $0 \leq t < \infty$, we recall that 
\begin{equation*}
    n(t; r) := \# \set{z \colon f(z) = 0, \operatorname{Re} z>0, 0 \geq \operatorname{Im}z \geq -t\log(1 + \abs{z}), \abs{z} < r} 
\end{equation*}
counting all zeros with multiplicity. 

We may now state the following proposition. One might compare this formula to \cite[(3.04)]{17} and \cite[Theorem 3]{2}. 

\begin{prop}\label{prop:2.4}
    For $f \in \mathcal F$, the following formula holds: 
    \begin{equation*}
        \int_{0}^r \frac{n(t; \nu)}{\nu} d\nu = \frac{1}{2\pi}
 \frac{\partial}{\partial \varphi} \left[\int_{r_0}^r \frac{J^\nu(\varphi)}{\nu}d\nu\right]_{\varphi = t} - \frac{1}{2\pi} \int_0^t \frac{m(r, \varphi)}{r} \log \abs{f(re^{i\theta_{\varphi}(u)})}d\varphi + o(r^\rho) \tag{2.17} \label{eq:217}
 \end{equation*}
 as $r \to \infty$, uniformly for $t$ in any bounded interval. 
\end{prop}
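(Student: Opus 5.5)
We prove \eqref{eq:217} with the argument principle applied on the curvilinear region
\[
D_t(\nu) := \set{z(u,\varphi)=ue^{i\theta_\varphi(u)} \colon r_0<u<\nu,\ 0< \varphi< t},
\]
followed by an integration in $\nu$ against $d\nu/\nu$. Throughout we may take $r_0$ large and, if necessary, perturb $r_0$ and $\nu$ slightly so that no zeros lie on $\partial D_t(\nu)$. The zeros with $|z|\le r_0$ contribute only $O(\log r)=o(r^\rho)$ to the left-hand side, so we may count only zeros in $D_t(\nu)$.

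\textbf{Step 1: the argument principle on the curvilinear box.} Write $L(u,\varphi)=\log|f(z(u,\varphi))|$ and $A(u,\varphi)=\arg f(z(u,\varphi))$. The boundary $\partial D_t(\nu)$ has four pieces:
\begin{itemize}
\item the real segment $\varphi=0$;
\item the log curve $\varphi=t$;
\item the arc $u=\nu$;
\item the arc $u=r_0$.
\end{itemize}
Then
\[
2\pi n(t;\nu) = \int_{r_0}^{\nu} \partial_u A(u,0)\,du - \int_{r_0}^{\nu}\partial_u A(u,t)\,du + \int_0^t \partial_\varphi A(\nu,\varphi)\,d\varphi - \int_0^t\partial_\varphi A(r_0,\varphi)\,d\varphi + O(1).
\]
The sign conventions have to be fixed by the orientation, which we check at the end.
\begin{itemize}
\item The real-axis term is $\operatorname{Im}\int_{r_0}^\nu f'/f=O(\nu^{\rho-1})$ by \eqref{eq:201}.
\item The $u=r_0$ term is $O(1)$.
\end{itemize}

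\textbf{Step 2: substituting \eqref{eq:214}.} On the curve $\varphi=t$ we substitute $\partial_u A = p\,\partial_u L + q\,\partial_\varphi L$. On the arc $u=\nu$ we substitute $\partial_\varphi A = m\,\partial_u L - p\,\partial_\varphi L$.

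The key idea is not to keep $\partial_\varphi L$ pointwise. We write
\[
\int_{r_0}^\nu q\,\partial_\varphi L\,du = \partial_\varphi J^\nu(\varphi) - \int_{r_0}^\nu (\partial_\varphi q) L\,du,
\]
which produces the principal term $\partial_\varphi J^\nu$. Next we divide by $\nu$ and integrate in $\nu$ from $r_0$ to $r$. This Jensen-type averaging is what converts the $\partial_u L$ on the arc $u=\nu$ into boundary values at $u=r$: integrating $\int_{r_0}^{r}\nu^{-1}\int_0^t m\,\partial_u L\,d\varphi\,d\nu$ by parts in $\nu$ (swapping the order of integration, since $\partial_u L$ is differentiation along $\nu$ on that arc) yields
\[
\int_0^t \frac{m(r,\varphi)}{r}L(r,\varphi)\,d\varphi
\]
plus an integral of $L$ weighted by $\partial_\nu(m/\nu)$. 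Similarly, the $p\,\partial_u L$ term on the curve $\varphi=t$ is integrated by parts in $u$.

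\textbf{Step 3: error terms (main obstacle).} All leftover terms are integrals of $|L|$ against weights built from $p$, $\partial_u p$, $\partial_\varphi q$, $\partial_\varphi p$, and $\partial_u(m/u)$. We will compute asymptotics from \eqref{eq:215}:
\[
x_u\approx 1,\quad y_u\approx \varphi/u,\quad x_\varphi\approx -\varphi\log^2u/u,\quad y_\varphi=\log(1+u).
\]
These give $q\sim 1/\log u$, $m\sim \log u$, $p=O(\log^2 u/u)$, and $\partial_\varphi q=O(\log^{-2}u\cdot \log u/u)$, i.e.\ one extra factor of $\log u/u$ or $1/\log u$ relative to the main terms.

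These weights are then combined with Lemma \ref{lem:2.3}, which gives $\int|L|\,du/u=O(r^{\rho+\varepsilon})$ uniformly in direction. The directions in Lemma \ref{lem:2.3} are rays, while our curves are log curves, so we transfer the bound by a change of variables; the log curves are within angle $O(\log u/u)$ of the real axis. We will also use the pointwise bound $L\le C u^{\rho-1}\log u$ in the log region (Phragm\'en--Lindel\"of, as shown before Lemma \ref{lem:2.1}) for the positive part, and Lemma \ref{lem:2.2} off the exceptional set $E_\varepsilon$ for control on arcs $u=\nu$, after averaging over $\nu$.

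The danger is that weights like $p$ or $\partial_u(m/u)$ only gain a factor $\log u/u$, which converts $r^{\rho+\varepsilon}$ bounds into $r^{\rho-1+\varepsilon}\log r$, which is still $o(r^\rho)$. Any term without such a gain must be shown to be one of the two displayed main terms; this bookkeeping is where care is needed.

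Uniformity in $t$ on bounded intervals follows because all constants above depend only on an upper bound for $t$.
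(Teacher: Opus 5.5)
Your route is the paper's: argument principle on the curvilinear box, substitution of \eqref{eq:214}, integration by parts to produce $\partial_\varphi J^\nu$ and the arc term $\int_0^t \frac{m(r,\varphi)}{r}L(r,\varphi)\,d\varphi$, division by $\nu$ and integration, and Lemma \ref{lem:2.3} for the remainder.

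There is one organisational difference. You propose to estimate two terms separately: $\int_{r_0}^r \nu^{-1}\int_0^t p_\varphi L\,d\varphi\,d\nu$, which comes from $p\,\partial_\varphi L$ on the arc, and $\int_0^t\int_{r_0}^r \partial_\nu(m/\nu)L\,d\nu\,d\varphi$, which comes from your integration by parts in $\nu$. The paper never estimates these. By the identity $p_\varphi=r\partial_r(m/r)$ they are equal and enter with opposite signs, so they cancel exactly. Your separate bounds would also work, since both weights are of size $\log\nu/\nu^2$, but they are unnecessary.

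\textbf{The gap is in Step 3, exactly where you say care is needed.} You assert $\partial_\varphi q=O(1/(u\log u))$. Relative to the measure $du/u$ of Lemma \ref{lem:2.3}, this gains only a factor $1/\log u$. Lemma \ref{lem:2.3} loses a factor $r^\varepsilon$, so the corresponding error term
\[
\int_{r_0}^r\frac{1}{\nu}\int_{r_0}^\nu q_\varphi(u,t)L(u,t)\,du\,d\nu=\int_{r_0}^r q_\varphi(u,t)L(u,t)\log\frac{r}{u}\,du
\]
would only be bounded by $O(r^{\rho+\varepsilon}/\log r)$, which is not $o(r^\rho)$. It is also not one of the displayed main terms, so by your own bookkeeping the argument does not close.

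The estimate is simply miscomputed. From \eqref{eq:215} the Jacobian is exactly $\abs{x_uy_\varphi-x_\varphi y_u}=u\log(1+u)/x$. Hence
\[
q=\pm\frac{x}{u\log(1+u)}\left(x_u^2+y_u^2\right),\qquad x_u^2+y_u^2=1+O(\log^2u/u^2),\qquad x_\varphi=O(\log^2u/u).
\]
Differentiating gives $q_\varphi=O(\log u/u^2)$; indeed the paper records $p_u+q_\varphi\sim-2t/u^2$. With this, every leftover weight is $O(u^{-1}\log^2 r)$ relative to $du/u$. Abel summation against $G(u)=\int_{r_0}^u\abs{L}\,ds/s=O(u^{\rho+\varepsilon})$ then gives $O(r^{\rho-1+\varepsilon}\log^2 r)=o(r^\rho)$ for each term.

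Two minor points. First, Lemma \ref{lem:2.3} is not moved to the log curves by a change of variables. Its proof rests on pointwise upper and lower bounds for $\log\abs{P(z)}$ that depend only on $\abs{z}$, so the integrated estimate holds verbatim along $u\mapsto ue^{i\theta_\varphi(u)}$, uniformly for $\varphi$ in bounded sets; this is how the paper uses it. Second, Lemma \ref{lem:2.2} plays no role here, because the arc term at $u=r$ is kept as a main term in \eqref{eq:217}, so no control on arcs is needed.
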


\begin{proof}
    Fix $t_0 > t$, and then take a large $r_0$ such that $\theta_{t}(r)$ is well-defined in $[0, t_0] \times [r_0, \infty)$. Applying the argument principle, we obtain
    \begin{align*}
        2\pi n(t; r) = 2\pi n(t; r_0) + &\; \int_0^t \frac{\partial \left[\arg f \left(r_0 e^{i \theta_{\varphi}(r_0)} \right) \right]}{\partial \varphi} d \varphi +  \int_{r_0}^r \frac{\partial \left[\arg f \left(u e^{i \theta_{t}(u)} \right) \right]}{\partial u} du \\
        -&\;  \int_0^t \frac{\partial \left[\arg f \left(r e^{i \theta_{\varphi}(r)} \right) \right]}{\partial \varphi} d \varphi - \int_{r_0}^r \frac{\partial \left[\arg f(u) \right]}{\partial u} du.
    \end{align*}
    The first two terms on the right are $O(1)$, uniformly for $t \leq t_0$, while using (\ref{eq:201}), the last term on the right is $O(r^{\rho -1})$, again uniformly in $t \leq t_0$. Using (\ref{eq:214}) on the other two terms, integrating by parts, and using that $p_{\varphi} - r\partial_r \left( \frac{m}{r} \right) = 0$, we get 
    \begin{align*}
        2\pi n(t; r) = \frac{\partial}{\partial \varphi}J^r(\varphi)\bigg\rvert_{\varphi = t} -&\; \int_{r_0}^r \left(p_u(u, t) + q_\varphi(u, t) \right) \log{\abs{f\left(ue^{i\theta_t(u)} \right)}}du\\
        +&\; 2p(r, t)\log{\abs{f\left(re^{i\theta_t(r)} \right)}} - r \frac{\partial}{\partial r} \left[\int_0^t \frac{m(r, \varphi)}{r} \log \abs{f\left( re^{t \theta_\varphi (r)}\right)} d\varphi \right] + O(r^{\rho-1}),
    \end{align*}
    uniformly in $t \leq t_0$. Divide by $2\pi r$ and integrate to find  
    \begin{align*}
        \int_0^r \frac{n(t; \nu)}{\nu} =&\; \frac{1}{2\pi} \frac{\partial}{\partial \varphi} \left[\int_{r_0}^r \frac{J^\nu(\varphi)}{\nu}d\nu \right]_{\varphi = t} - \frac{1}{2\pi} \int_{r_0}^r \frac{1}{\nu} \int_{r_0}^\nu \left(p_u(u, t) + q_\varphi(u, t) \right)\log{\abs{f\left(ue^{i\theta_t(u)} \right)}} dud\nu\\
        +&\; \frac{1}{\pi} \int_{r_0}^r \frac{p(\nu, t)}{\nu} \log{\abs{f\left(re^{i\theta_t(\nu)} \right)}}d\nu - \frac{1}{2\pi} \int_{0}^t \frac{m(r, \varphi)}{r} \log{\abs{f\left(re^{i\theta_\varphi(r)} \right)}}d \varphi + O\left(r^{\rho-1}\right)
    \end{align*}
    uniformly in $t \leq t_0$. Calculating from (\ref{eq:215}) that 
    \begin{align*}
        p_u(u, t) + q_\varphi(u, t) \sim&\; -\frac{2t}{u^2}\\
        p(u, t) \sim&\; \frac{t\log\left(1 + u \right)}{u},
    \end{align*}
    we can apply Lemma \ref{lem:2.3} to bound the second and third terms on the right to conclude the proof. 
\end{proof}

We now use the formula of Proposition \ref{prop:2.4} to prove Theorem \ref{thm:1.2} Recall that this theorem says that for any $f\in\mathcal{F}$ and $t\geq 0$, we have 

\begin{equation*}
    \varlimsup_{r\to\infty} \frac{1}{r^\rho} \int_0^t\int_0^r \frac{n(\tau, \nu)}{\nu} d\nu d\tau\leq \frac{\ell_f(t)}{2\pi\rho^2} \tag{2.18} \label{eq:218}
\end{equation*}



\begin{proof}[Proof of Theorem \ref{thm:1.2}]
    We begin by integrating (\ref{eq:217}) in $t$, and using that 
    \begin{equation*}
        \int_{r_0}^r \frac{J^\nu(0)}{\nu}d \nu = O\left(r^{\rho- 1} \right)
    \end{equation*}
    to get that 
    \begin{equation*}
        \int_0^t \int_0^r \frac{n(\tau; \nu)}{\nu} d\nu d\tau = \frac{1}{2\pi}\int_{r_0}^r \frac{J^\nu(t)}{\nu} d\nu - \frac{1}{2\pi} \int_0^t \int_0^\tau \frac{m(r, \varphi)}{r} \log{\abs{f\left(re^{i\theta_\varphi(r)} \right)}} d\varphi d\tau + o\left(r^\rho\right) \tag{2.19} \label{eq:219}
    \end{equation*}
    uniformly for $t$ in any bounded interval.

    Next, since $q(u, \varphi) \sim \frac{1}{\log\left(1 + u \right)}$ uniformly for bounded $\varphi$, we obtain from (\ref{eq:216}) that 
    \begin{equation*}
        \varlimsup_{r \to \infty} \frac{1}{2\pi r^\rho} \int_{r_0}^r \frac{J^\nu(t)}{\nu} d\nu \leq \frac{\ell_f(t)}{2\pi \rho^2}.
    \end{equation*}
    Calculating from (\ref{eq:215}) that $m(r, \varphi) \sim -\log\left(1 + r\right)$, the remaining term on the right in (\ref{eq:219}) is easily bounded above by $O\left(r^{\rho-1} \right)$ and the proof is complete. 
\end{proof}

As mentioned in the introduction, our applications to the study of scattering resonances in later sections will show that this theorem is sharp, in the sense that there exist non-trivial $f$ for which (\ref{eq:218}) is an equality for all $t \geq 0$. We can also obtain a bound on the counting function itself, rather than the integrated version in (\ref{eq:218}):

\begin{cor}\label{cor:2.5}
    For $f \in \mathcal F$, we have 
    \begin{equation*}
        \varlimsup_{r \to \infty} \frac{n\left(t; r\right)}{r^\rho} \leq \frac{e}{\pi \rho} \ell_f'\left(2t+\right)
    \end{equation*}
    for all $t \geq 0$.
\end{cor}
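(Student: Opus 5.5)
Since $\ell_f$ is only controlled through an integrated quantity, the plan is to pass from the double integral in Theorem \ref{thm:1.2} to the counting function itself by two monotonicity arguments, one in $t$ and one in $r$. Set
\[
N(\tau;r):=\int_0^r \frac{n(\tau;\nu)}{\nu}\,d\nu .
\]
The first key observation is that $n(\tau;\nu)$ is nondecreasing in both $\tau$ and $\nu$.

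\textbf{Step 1 (removing the $\tau$-integral).} Fix $t\ge 0$ and $h>0$. By monotonicity in $\tau$,
\[
hN(t;r)\le \int_t^{t+h} N(\tau;r)\,d\tau = \int_0^{t+h}N\,d\tau-\int_0^{t}N\,d\tau .
\]
Theorem \ref{thm:1.2} bounds only the $\varlimsup$ of the first integral, so a lower bound for the subtracted term is not available directly. I therefore do not subtract. Instead I use that $n(\tau;\nu)$ is nondecreasing in $\tau$, so that $\tau\mapsto N(\tau;r)$ is nondecreasing. Hence for $s=2t$,
\[
tN(t;r)\le \int_t^{2t}N(\tau;r)\,d\tau\le \int_0^{2t}N\,d\tau .
\]
This is crude, and it gives $\ell_f(2t)/t$ rather than a derivative. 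The better route is to use convexity and $\ell_f(0)=0$. For $\tau\ge t$ one has $N(\tau;r)\ge N(t;r)$, so for any $s>t$
\[
(s-t)\,N(t;r)\le\int_0^{s}N(\tau;r)\,d\tau .
\]
Dividing by $r^\rho$ and applying Theorem \ref{thm:1.2} gives
\[
\varlimsup_r \frac{N(t;r)}{r^\rho}\le \frac{\ell_f(s)}{2\pi\rho^2(s-t)} .
\]
Taking $s=2t$ yields $\ell_f(2t)/(2\pi\rho^2 t)$. By convexity and $\ell_f(0)=0$ we have $\ell_f(2t)\le 2t\,\ell_f'(2t+)$, since the chord slope from $0$ is at most the right derivative at the endpoint. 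Therefore
\[
\varlimsup_r \frac{N(t;r)}{r^\rho}\le \frac{\ell_f'(2t+)}{\pi\rho^2}.
\]
The case $t=0$ needs separate treatment. There I would use any $s>0$ together with $\ell_f(s)/s\le \ell_f'(s+)$, which is a right-continuity-type limit. Alternatively I would simply let $s\downarrow0$, where $\ell_f(s)/s\to\ell_f'(0+)$.

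\textbf{Step 2 (removing the $\nu$-integral).} By monotonicity in $\nu$, for any $k>1$,
\[
n(t;r)\log k\le \int_r^{kr}\frac{n(t;\nu)}{\nu}\,d\nu\le N(t;kr).
\]
Hence
\[
\varlimsup_r\frac{n(t;r)}{r^\rho}\le \frac{k^\rho}{\log k}\cdot\frac{\ell_f'(2t+)}{\pi\rho^2}.
\]
Minimizing $k^\rho/\log k$ gives $\log k=1/\rho$ and the value $e\rho$. This produces the stated bound $\frac{e}{\pi\rho}\ell_f'(2t+)$.

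\textbf{Main obstacle.} The only delicate points are the endpoint convexity inequality $\ell_f(2t)\le 2t\,\ell_f'(2t+)$ and the case $t=0$. The inequality follows from Lemma \ref{lem:2.1}, because $\ell_f$ is convex on $[0,\infty)$ with $\ell_f(0)=0$ and is continuous. Both points should be routine.
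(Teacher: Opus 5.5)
Your proof is correct and is essentially the paper's argument: the paper bounds $(c-1)t\,\frac{\log d}{d^\rho}\,n(t;r)$ by the double integral of $n(\tau;u)/u$ over $[t,ct]\times[r,dr]$ in one step, applies Theorem \ref{thm:1.2}, and then takes $c=2$, $d=e^{1/\rho}$ and uses $\ell_f(2t)/(2t)\le \ell_f'(2t+)$, which is exactly your two monotonicity steps combined. Your separate treatment of $t=0$ is a small addition, since the paper's division by $(c-1)t$ degenerates there; that case is trivial in any event, because $f$ has no real zeros by \eqref{eq:202}, so $n(0;r)=0$, while $\ell_f'(0+)\ge 0$.
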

\begin{proof}
    Using Theorem \ref{thm:1.2}, we have for any $c, d > 1$, 
    \begin{equation*}
        \left(c-1\right)t\frac{\log d}{d^\rho} \varlimsup_{r \to \infty} \frac{n\left(t;r \right)}{r^\rho} \leq \varlimsup_{r\to \infty} \frac{1}{d^\rho r^\rho}\int_{t}^{ct}\int_r^{dr} \frac{n\left( \tau;  u\right)}{u} du d \tau \leq \frac{\ell_f(ct)}{2\pi \rho^2}
    \end{equation*}
    or 
    \begin{equation*}
        \varlimsup_{r\to \infty} \frac{n(t; r)}{r^\rho} \leq \frac{1}{2\pi \rho^2} \cdot \frac{c}{c-1} \cdot \frac{d^\rho}{\log d} \cdot \frac{\ell_f(ct)}{ct}.
    \end{equation*}
    Now, $\frac{d^\rho}{\log d}$ is minimized at $d = e^{1/\rho}$, where it is equal to $e\rho$. On the other hand, setting $c = 2$, and using the fact that $\ell_f$ is convex, $\ell_f(0) = 0$ (Lemma \ref{lem:2.1}), to obtain $\frac{\ell_f(2t)}{2t} \leq \ell_f'(2t+)$ for all $t \geq 0$, we complete the proof of the corollary. 
\end{proof}

Next we prove Theorem \ref{thm:1.3} showing that more information about the growth of $f$ along some logarithmic curves leads to a much stronger statement about the density of zeros. This theorem is analogous to \cite[Ch. III, Section 3, Theorem 3]{17} and \cite[Theorem 4]{2}, but our growth condition (see \eqref{eq:104} or \eqref{eq:220} below) is stronger than the ones assumed there. Recall that the theorem says that if for $f\in\mathcal{F}$ and some open interval $I\subset (0,\infty)$ the limit 

\begin{equation*}
    \lim_{r\to\infty} \frac{\log\abs{f(re^{i\theta_t(r)})}}{r^{\rho-1}\log r}=\ell_f(t) \tag{2.20} \label{eq:220}
\end{equation*}
exists for all $t\in I$, then 
\[
    \lim_{r\to\infty} \frac{n(t;r)}{r^\rho}=\frac{\ell_f'(t)}{2\pi\rho}
\]
for all $t\in I$ for which $\ell_f'(t)$ exists. 



\begin{proof}[Proof of Theorem \ref{thm:1.3}]
    By Proposition \ref{prop:2.4}, Lemma \ref{lem:2.2}, and the fact that $m(r, \varphi) \sim -\log(1 + r)$, we see that there is a set $E \subset (0, \infty)$ of finite measure such that 
    \begin{equation*}
        \int_0^r \frac{n(t, \nu)}{\nu}d\nu = \frac{1}{2\pi} \frac{\partial}{\partial \varphi} \left[\int_{r_0}^r \frac{J^\nu(\varphi)}{\nu} d\nu \right]_{\varphi=t} + o(r^\rho) \quad r \notin E \tag{2.21} \label{eq:221}
    \end{equation*}
    uniformly for $t$ in any bounded interval. For small $\abs{\delta}$, we integrate (\ref{eq:221}) to find 
    \begin{equation*}
        \frac{1}{\delta} \int_{t}^{t + \delta} \int_0^r \frac{n(\tau; \nu)}{\nu} d\nu d\tau = \frac{1}{2\pi} \int_{r_0}^r \frac{J^\nu(t + \delta)- J^\nu(t)}{\delta\nu}d\nu + o(r^\rho) \quad r \notin E. \tag{2.22} \label{eq:222}
    \end{equation*}

    We now let $t \in I$ be such that $\ell_f'(t)$ exists. Given $\varepsilon > 0$, take $\abs{\delta}$ small enough that $t + \delta \in I$ and 
    \begin{equation*}
        \abs{\frac{\ell_f(t + \delta) - \ell_f(t)}{\delta} - \ell_f'(t)} < \varepsilon. \tag{2.23} \label{eq:223}
    \end{equation*}
    Existence of the limit (\ref{eq:220}) implies the existence of the limit
    \begin{equation*}
        \lim_{r \to \infty} \frac{1}{r^\rho} \int_{r_0}^r \frac{J^\nu(t)}{\nu} d\nu = \frac{\ell_f(t)}{\rho^2},
    \end{equation*}
    and it follows from (\ref{eq:223}) that if $r > r_\varepsilon$ and $\abs{\delta}$ is small, then 
    \begin{equation*}
        \abs{\frac{1}{r^\rho} \int_{r_0}^r \frac{J^\nu(t + \delta) - J^\nu(t)}{\delta \nu} d\nu - \frac{\ell_f'(t)}{\rho^2}} < 2\epsilon.
    \end{equation*}
    Thus from (\ref{eq:222}), if $r > r_\varepsilon, r \notin E$, and $\abs{\delta}$ is small, then 
    \begin{equation*}
        \abs{\frac{1}{\delta r^\rho} \int_t^{t + \delta} \int_0^r \frac{n(\tau; \nu)}{\nu}d\nu d\tau - \frac{\ell_f'(t)}{2\pi \rho^2}} < 3\varepsilon.
    \end{equation*}
    
    Now, if $\delta < 0$, then 
    \begin{equation*}
        \frac{1}{r^\rho} \int_0^r \frac{n(t; \nu)}{\nu} d\nu \geq \frac{1}{\delta r^\rho} \int_t^{t + \delta}\int_0^r \frac{n(\tau; \nu)}{\nu} d\nu d\tau > \frac{\ell_f'(t)}{2\pi \rho^2} - 3\varepsilon,
    \end{equation*}
    and taking $\delta > 0$, we similarly obtain 
    \begin{equation*}
        \frac{1}{r^\rho} \int_0^r \frac{n(t; \nu)}{\nu}d\nu < \frac{\ell_f'(t)}{2\pi \rho^2} + 3\varepsilon,
    \end{equation*}
    both for $r > r_\varepsilon, r \notin E$. Since $\varepsilon$ is arbitrary, we've shown that 
    \begin{equation*}
        \lim_{\substack{r \to \infty \\ r \notin E}} \frac{1}{r^\rho} \int_0^r \frac{n(t; \nu)}{\nu} d\nu = \frac{\ell_f'(t)}{2\pi \rho^2}.
    \end{equation*}
    The same argument used at the end of the proof of \cite[Ch. III, Section 3, Theorem 3]{17} takes this to the conclusion of the theorem. 
    
\end{proof}

We conclude this section by explaining how these results apply to the scattering determinant, $\det S_V(-\lambda)$, for $V \in L_c^\infty(\R^n, \C), n \geq 1$ odd. We first note that by \cite[Theorem 3.54]{10}, $\det S_V(-\lambda)$ is meromorphic in $\C$, with at most finitely many poles in $\operatorname{Im} \lambda \leq 0$, say $\lambda_1, \lambda_2, \ldots, \lambda_m$ (where we repeat poles to the multiplicity). If we set 
\begin{equation*}
    f(\lambda) := \prod_{j=1}^m \frac{\lambda - \lambda_j}{\lambda + \lambda_j} \det S_V(-\lambda),
\end{equation*}
then $f$ is holomorphic in $\operatorname{Im} \lambda \leq 0$, and we claim that $f \in \mathcal F$. 

Indeed, for $n = 1$, that $f$ is of order 1 and finite type in $\operatorname{Im} \lambda \leq 0$ follows from \cite[(2.5.13)]{10}. For $n \geq 3$, that $f$ is of order at most $n$ and finite type follows from \cite[Theorem 5]{24} (see the remarks following \cite[Theorem 3.2]{6} for an explanation that \cite[Theorem 5]{24} also applies to complex-valued $V$). That (\ref{eq:201}) holds for $n \geq 3$ follows exactly as in \cite[Section 4]{6}. For $n =1$, we use Part 3 of \cite[Proof of Theorem 2.20]{10} (which makes no use of the hypothesis that $V$ be real-valued), to see that for $\lambda$ real, 
\begin{equation*}
    \frac{\frac{d}{d\lambda}\left[\det S_V(-\lambda) \right]}{\det S_V(-\lambda)} = -\operatorname{tr}\left(\partial_\lambda S_V(-\lambda) S_V(-\lambda)^{-1} \right) = O(\lambda^{-2}). 
\end{equation*}
Then (\ref{eq:201}) for $n = 1$ again follows as in \cite[Section 4]{6}.

Thus, we have verified that $f \in \mathcal F$; as mentioned at the beginning of the section, since the resonances and the zeros of $\det S_V(-\lambda)$ agree with multiplicity but for at most finitely many exceptions, this completes the proof of Theorem 1.4.

Finally, we remark that it may be possible to adapt the hypotheses in Definition \ref{def:1.1} so that the results apply more generally to black box scattering matrices (see \cite{23} and \cite[Section 4]{10}) in any dimension. This would require, at a minimum, an appropriate alternative to Definition \ref{def:1.1} (ii), and then our results would require an additional term involving the asymptotics of the scattering phase, which has been described in \cite{5}.

\section{MANY STRINGS OF RESONANCES FOR SINGULAR POTENTIALS}\label{sec:3}

\subsection{Preliminaries}\label{sec:4.1}
In this section we will study the class $\mathcal V$ of singular potentials in dimension one from Definition \ref{def:1.5} in the introduction, and prove Theorems \ref{thm:1.6}, \ref{thm:1.7}, and \ref{thm:1.8}. We begin with some technical material generalizing that of \cite[Section 2 and Section 3.1]{7}, and we start with the representation of the scattering matrix used there (see also \cite{11}, \cite{22}, \cite[Chapter 2]{10}). 

For $V \in L_c^\infty(\R; \C)$ let $R_V(\lambda) := \left( -\frac{d^2}{dx^2} + V - \lambda^2 \right)^{-1} \colon L_c^2 \to H_{\operatorname{loc}}^2$ denote the meromorphic continuation of the resolvent from the physical half-plane $\set{\operatorname{Im} \gg 1}$ to all of $\C$. We note in particular that the free resolvent, $R_0$, has explicit integral kernel 
\begin{equation*}
    R_0(\lambda; x,y) = \frac{i}{2\lambda} e^{i \lambda \abs{x - y}}. \tag{3.1} \label{eq:401}
\end{equation*}

Define the functions 
\begin{equation*}
    f_{\pm}^V(x, \lambda) := e^{\pm i \lambda x} R_V(-\lambda)e^{\mp i \lambda \cdot} V.
\end{equation*}
By \cite[Lemma 3.3]{11}, we have 
\begin{equation*}
    \abs{f_\pm^V(\cdot, \lambda)} \leq \frac{C}{\abs{\lambda}} \tag{3.2} \label{eq:402}
\end{equation*}
for $\operatorname{Im} \lambda \leq 0, \abs{\lambda} > C_1$. The scattering matrix, $S_V$, then has the following representation:
\begin{equation*}
    S_V(\lambda) = I + \frac{1}{2i\lambda} \begin{pmatrix}
        T_+(\lambda) & \rho_-(\lambda) \\  \rho_+(\lambda) & T_-(\lambda)
    \end{pmatrix} \tag{3.3} \label{eq:403}
\end{equation*}
where 
\begin{equation*}
    \rho_\pm(-\lambda) := \int e^{\mp 2 i \lambda x} V(x) (1 - f_\pm^V(x, \lambda)) dx \quad T_\pm(-\lambda) := \int V(x)(1 - f_\pm^V(x, \lambda))dx. \tag{3.4} \label{eq:404}
\end{equation*}

\noindent Note that $\frac{1}{2i\lambda} \rho_\pm (\lambda)$ are the reflection coefficients, and $1 + \frac{1}{2i \lambda} T_+(\lambda) = 1 + \frac{1}{2i \lambda} T_-(\lambda)$ is the transmission coefficient. 

Directly from (\ref{eq:402}) and (\ref{eq:403}), we have an expansion for the scattering determinant 
\begin{equation*}
    \det S_V(-\lambda) = 1 + \frac{1}{4\lambda^2} \rho_-(-\lambda) \rho_+(-\lambda) + O(\abs{\lambda}^{-1}). \tag{3.5} \label{eq:405}
\end{equation*}

We now define the following class of functions:
\begin{definition}\label{def:4.1}
    Let $x_0 < x_1 < \cdots < x_N$ and let $\set{\nu_k}_{k=0}^N \subset \N_0$. We say that a function $U \in \mathcal U(\set{x_k}, \set{\nu_k})$ provided $U \in C^\infty (\R/[x_0, x_N])$ and 
    \begin{enumerate}[label=\roman*)] 
        \item $U \in C^{M_k}([x_{k-1}, x_k])$ where $M_k := \max \set{\nu_{k-1}, \nu_k} + 1$, $k = 1, 2, \ldots , N$,
        
        \item $U^{(\nu)}(x_k-) = U^{(\nu)}(x_k +)$ for all $\nu < \nu_k$, $k = 0, 1, \ldots , N$. 
    \end{enumerate}
\end{definition}

Clearly the definition covers the class $\mathcal V$, but has added generality so as to also include the functions $f_\pm^V$ for any $V \in \mathcal V$. The following lemma is similar to \cite[Lemma 3.1]{7}; as expected, the statement and proof are a bit more complicated with the level of generality we consider. In fact, to state it adequately, we need a few more definitions. 

Fix $k \in \set{1, 2, \ldots , N}$ and set 
\begin{align*}
    X_{k, 0} :=&\; \set{x_0, x_1, \ldots , x_{k-1}},\\
    S_{k, 0} := &\; \set{\nu_0, \nu_1, \ldots, \nu_{k-1}},\\
    D_{k, 0} := &\; \min S_{k, 0},
\end{align*}
and for $m = 1, 2, \ldots$, 
\begin{align*}
    X_{k, m} := &\; \set{x_p \in X_{k, 0} \colon \nu_p = D_{k, m-1}},\\
    S_{k, m} := &\; \set{\nu \in S_{k, 0} \colon \nu > D_{k, m-1}},\\
    D_{k, m} := &\; \min S_{k, m},
\end{align*}
the latter definitions terminating after a finite number of steps. Finally, with $M_k$ as in Definition \ref{def:4.1}, we either have $1 + D_{k, 0} = M_k$ or else $2 + D_{k,0} \leq M_k$. In the latter case we define $m_k \geq 1$ to be the largest $m$ for which $2 + D_{k, m-1} \leq M_k$. Note that this implies $2 + D_{k, m_k - 1} \leq M_k \leq 1 + D_{k, m_k}$. 

We may finally state our main lemma; there is an analogue for $f_+^V$:
\begin{lm}\label{lem:4.2}
    Let $V \in \mathcal U(\set{x_k}, \set{\nu_k})$ with $\operatorname{ch supp} V = [x_0, x_N]$. Then there exists $C_0 > 0$ so that in $\operatorname{Im}\lambda \leq 0, \abs{\lambda} > C_0$, we have $f_-(\cdot, \lambda) = f_-^V( \cdot, \lambda) \in \mathcal U(\set{x_k}, \set{\nu_k + 2})$, and the following expansions hold: In each $[x_{k-1}, x_k], k = 1, 2, \ldots, N$, we have 
    \begin{enumerate}[label=\roman*)] 
        \item $\partial_x^d f_-(\cdot, \lambda) = O(\abs{\lambda}^{-1})$ for $0 \leq d \leq 1 + D_{k, 0}$,
        
        \item For $\mu = 1, 2, \ldots , m_k$, 
        \begin{equation*}
            \partial_x^d f_-(\cdot, \lambda) = \sum_{m=1}^\mu \sum_{x_l \in X_{k, m}} e^{-2i \lambda(x - x_l)} O_{l, m}\left(\abs{\lambda}^{d - D_{k, m-1} -2} \right) + O(\abs{\lambda}^{-1})
        \end{equation*}
        for $2 + D_{k, \mu-1} \leq d \leq \min \set{1 + D_{k, \mu}, M_k}$, with the understanding that if $M_k = 1 + D_{k, 0}$, we have only the bounds in (i). 
    \end{enumerate}
    
\end{lm}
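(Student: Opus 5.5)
Write $f_-(x,\lambda)=e^{-i\lambda x}R_V(-\lambda)(e^{i\lambda\cdot}V)$, so that $u:=e^{i\lambda x}f_-$ solves $(-\partial_x^2+V-\lambda^2)u=e^{i\lambda x}V$. Equivalently, $f_-$ satisfies the conjugated ODE
\begin{equation*}
f_-'' + 2i\lambda f_-' = V(f_- - 1)...
\end{equation*}
with sign conventions to be fixed from the computation $(-\partial_x^2-\lambda^2)(e^{i\lambda x}g)=e^{i\lambda x}(-g''-2i\lambda g')$. That is, $-f_-''-2i\lambda f_-' = V(1-f_-)$. Outgoing conditions give $f_-\equiv 0$ to the right of $x_N$ (or to the left, depending on orientation; we fix the side on which $f_-$ vanishes). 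The key representation is the Volterra form obtained from (\ref{eq:401}):
\begin{equation*}
f_-(x,\lambda)=\frac{1}{2i\lambda}\int_{x}^{\infty}\left(1-e^{-2i\lambda(x-y)}\right)V(y)\bigl(1-f_-(y,\lambda)\bigr)\,dy,
\end{equation*}
or its mirror image. With (\ref{eq:402}) giving the $O(\abs{\lambda}^{-1})$ bound, this is the base case (i) for $d=0$. Regularity, i.e.\ $f_-\in\mathcal U(\set{x_k},\set{\nu_k+2})$, follows from the ODE. $f_-$ is smooth wherever $V$ is, it is $C^{M_k+2}$-type on each $[x_{k-1},x_k]$, and at $x_k$ the jump of $f_-''$ equals minus the jump of $V(1-f_-)$, so the first discontinuous derivative of $f_-$ is of order $\nu_k+2$.

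For the derivative expansions I will differentiate the ODE and argue by induction on $d$ within each interval $[x_{k-1},x_k]$. Write $g=V(1-f_-)$, so that $\partial_x(e^{2i\lambda x}f_-')=-e^{2i\lambda x}g$. Then
\begin{equation*}
\partial_x^{d}f_- (x)= -\int e^{-2i\lambda(x-y)}\partial_y^{d-1}g(y)\,dy+\text{jump terms},
\end{equation*}
where the integral is taken from the relevant endpoint. I will integrate by parts repeatedly across all singular points lying on the integration side. This gives the index set $X_{k,0}=\set{x_0,\dots,x_{k-1}}$; I orient things so that these are the points already traversed. Each integration by parts over a piece $[x_{l-1},x_l]$ produces boundary terms $e^{-2i\lambda(x-x_l)}(2i\lambda)^{-j-1}\bigl[\partial^{d-1+j}g\bigr]_{x_l}$. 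The jumps of these derivatives vanish unless $d-1+j\geq \nu_l$. Hence the first nonvanishing contribution from $x_l$ has size $\abs{\lambda}^{d-1-\nu_l-1}=\abs{\lambda}^{d-\nu_l-2}$. This contribution is $O(\abs{\lambda}^{-1})$ exactly when $d\le 1+\nu_l$, which explains statement (i) for $d\le 1+D_{k,0}$. Once $d\ge 2+D_{k,\mu-1}$, the points $x_l\in X_{k,\mu}$ (those with $\nu_l=D_{k,\mu-1}$) start contributing terms of size $\abs{\lambda}^{d-D_{k,\mu-1}-2}$ that are not $O(\abs{\lambda}^{-1})$. This yields the sum over $m\le\mu$ in (ii). The stopping at $\min\set{1+D_{k,\mu},M_k}$ is because the next group enters at $d=2+D_{k,\mu}$, and differentiability is only available up to order $M_k$.

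The main obstacle is controlling the feedback of $f_-$ into $g=V(1-f_-)$. Derivatives of $f_-$ of high order are no longer $O(\abs{\lambda}^{-1})$: by induction they carry oscillating factors $e^{-2i\lambda(x-x_l)}$ with growing powers of $\lambda$. So the integral of $\partial^{d-1}(Vf_-)$ could a priori produce terms as large as the main ones. I will handle this by a joint induction on $d$ and on $k$ (moving interval by interval from the starting endpoint). Products $V^{(a)}\partial^{b}f_-$ involve $b\le d-1$, so the inductive hypothesis applies to them. Integrating $e^{2i\lambda y}\cdot e^{-2i\lambda(y-x_l)}O(\abs{\lambda}^{b-D-2})$ gives a nonoscillatory integrand whose total size is $O(\abs{\lambda}^{b-D-2})\le O(\abs{\lambda}^{d-D-3})$. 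After multiplication by $e^{-2i\lambda x}$ this is absorbed into the $O_{l,m}$ terms with one fewer power of $\lambda$. Checking that the exponent bookkeeping closes is where the care is needed. In particular, I must verify that the condition $2+D_{k,m_k-1}\le M_k$ keeps all terms within the allowed regularity. This is the step I would check most carefully, first for $N=1,2$ as in \cite[Lemma 3.1]{7}.
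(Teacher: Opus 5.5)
Your strategy matches the paper's proof: an integral identity for $\partial_x f_-$, piecewise integration by parts so that jumps of $g^{(j)}$, $g=V(1-f_-)$, appear at the $x_p$ and vanish for $j<\nu_p$, and a joint induction over the intervals (left to right) and over $d$. The gap is the representation you call key. It is wrong, and the side is not a convention you may fix at will.

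From (\ref{eq:401}), $R_0(-\lambda;x,y)=-\frac{i}{2\lambda}e^{-i\lambda\abs{x-y}}$, and the resolvent identity gives
\[
f_-(x,\lambda)=\frac{1}{2i\lambda}\Bigl(\int_{-\infty}^{x}e^{-2i\lambda(x-y)}g(y)\,dy+\int_{x}^{\infty}g(y)\,dy\Bigr).
\]
So $f_-$ vanishes on neither side. It equals the constant $T_-(-\lambda)/(2i\lambda)$ for $x<x_0$, and $\rho_-(-\lambda)e^{-2i\lambda x}/(2i\lambda)$ for $x>x_N$. What vanishes is $\partial_x f_-$ for $x<x_0$. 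Integrating your (correct) identity $\partial_x(e^{2i\lambda x}f_-')=-e^{2i\lambda x}g$ from the left gives exactly (\ref{eq:406}).

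Your formula instead describes the solution of the same equation that vanishes for $x>x_N$. Its derivative is $\int_x^\infty e^{2i\lambda(y-x)}g(y)\,dy$, whose kernel has modulus $e^{2\abs{\operatorname{Im}\lambda}(y-x)}\ge 1$. Built on that, no bound would be uniform in $\operatorname{Im}\lambda\le 0$. Integration by parts would also produce factors $e^{-2i\lambda(x-x_l)}$ with $x_l>x$. These are exponentially large and attached to points on the wrong side, contradicting the structure of the statement.

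The lemma rests on $y\le x$ in (\ref{eq:406}). That gives $\abs{e^{-2i\lambda(x-y)}}\le 1$ and makes the contributing points exactly $X_{k,0}=\set{x_0,\dots,x_{k-1}}$, which your bookkeeping already presupposes. With (\ref{eq:406}) substituted, your sketch is the paper's argument.

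A further point, which the paper also leaves inside ``in exactly the same way'': your sentence ``products $V^{(a)}\partial^b f_-$ involve $b\le d-1$, so the inductive hypothesis applies'' is too quick on the pieces $[x_{p-1},x_p]$ with $p<k$. There $V$ is only $C^{M_p}$, and $M_p$ can be far below $d-1$. For example, take $\nu_0=\nu_1=0$ and $\nu_2$ large, working on $[x_1,x_2]$.

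On such a piece you integrate by parts only as far as its regularity allows. The frozen contribution is $e^{-2i\lambda(x-x_p)}c_p(\lambda)$ with $c_p=O(1)$, up to lower-order terms attached to points further left. Further $x$-derivatives fall on the exponential, costing $\abs{\lambda}$ each. This yields $e^{-2i\lambda(x-x_p)}O(\abs{\lambda}^{d-2-\max\set{\nu_{p-1},\nu_p}})$. That term is of the allowed form if $\nu_p\le D_{k,\mu-1}$, since then $x_p\in X_{k,m}$ for some $m\le\mu$. It is $O(\abs{\lambda}^{-1})$ if $\nu_p\ge D_{k,\mu}$, because $d\le 1+D_{k,\mu}$.
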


\begin{proof}
    Standard elliptic regularity theory and the definition of $f_-$ in terms of the resolvent show that $f_- \in \mathcal U(\set{x_k}, \set{\nu_k + 2})$. Using a resolvent identity, we get 
    \begin{equation*}
        f_-(x, \lambda) = e^{-i \lambda x}R_0(-\lambda)(e^{i \lambda \cdot}V)(x)-e^{-i \lambda x}R_0(-\lambda)(e^{i \lambda \cdot} V f_-(\cdot, \lambda))(x).
    \end{equation*}
    From the explicit integral kernel of the free resolvent, (\ref{eq:401}), we obtain the equation 
    \begin{equation*}
        \partial_x f_-(x, \lambda) = - \int_{x_0}^x e^{-2i\lambda(x - y)}V(y)dy + \int_{x_0}^x e^{-2i \lambda (x-y)} V(y)f_-(y, \lambda) dy, \tag{3.6} \label{eq:406}
    \end{equation*}
    which immediately shows that any bounds made in an interval $[x_{k-1}, x_k]$ will depend upon the behavior of $f_-$ and $V$ on each of the intervals $[x_0, x_1], [x_1, x_2], \ldots$, to the left of $[x_{k-1}, x_k]$. Accordingly, we first prove the portion of the lemma involving $[x_0, x_1]$. 

    If $\nu_1 \leq \nu_0$, then $1 + D_{1, 0} = M_1$, and an elementary induction proof using (\ref{eq:401}) and beginning from (\ref{eq:402}) shows that 
    \begin{equation*}
        \partial_x^df_-(\cdot, \lambda) = O(\abs{\lambda}^{-1}) \quad \text{for } 0 \leq d \leq M_1.
    \end{equation*}
    If $\nu_0 < \nu_1$, then just as above, we easily obtain part (i) in this case. We then differentiate (\ref{eq:406}) and integrate by parts to find 
    \begin{equation*}
        \partial_x^{1 + D_{1, 0}} f_-(x, \lambda) = - \int_{x_0}^x e^{-2i \lambda(x-y)} V^{(D_{1, 0})}(y)dy + \int_{x_0}^x e^{-2i \lambda(x-y)}(Vf_-)^{(D_{1, 0})}(y)dy,
    \end{equation*}
    whence 
    \begin{align*}
        \partial_x^{2 + D_{1, 0}}f_-(x, \lambda) =&\; -V^{(D_{1, 0})}(x_0 +)e^{-2i \lambda(x-x_0)} - \int_{x_0}^x e^{-2i \lambda(x-y)} V^{(D_{1, 0} + 1)}(y) dy \\
        +&\; (Vf_-)^{(D_{1, 0})}(x_0+, \lambda)e^{-2i \lambda(x - x_0)} + \int_{x_0}^x e^{-2i \lambda(x-y)}(Vf_-)^{(D_{1, 0} + 1)}(y)dy\\
        =&\; e^{-2i \lambda(x-x_0)}O(1) + O(\abs{\lambda}^{-1}).
    \end{align*}
    Continuing this way, using the previously made bounds at each step, we obtain 
    \begin{equation*}
        \partial_x^d f_-(x, \lambda) = e^{-2i \lambda(x - x_0)}O(\abs{\lambda}^{d - D_{1, 0} - 2}) + O(\abs{\lambda}^{-1}) \quad \text{for } 2 + D_{1, 0} \leq d \leq M_1,
    \end{equation*}
    concluding the proof of the lemma for the interval $[x_0, x_1]$. 

    Next, we consider an interval $[x_{k-1}, x_k]$, assuming the lemma proved for all intervals to the left of it. The proof of (i) for this interval is again very easy, using (\ref{eq:406}) and beginning from (\ref{eq:402}). If $M_k = 1 + D_{k, 0}$, we are done. Otherwise, since $V\in C^{(D_{k, 0})}([x_0, x_k])$, we get 
    \begin{align*}
        \partial_x^{1 + D_{k, 0}}f_-(x, \lambda) =&\; - \int_{x_0}^x e^{-2i \lambda(x-y)}V^{(D_{k, 0})}(y)dy + \int_{x_0}^x e^{-2i \lambda(x-y)}(Vf_-)^{(D_{k, 0})}(y)dy \\
        =&\; - \sum_{p=1}^{k-1}\int_{x_{p-1}}^{x_p} e^{-2i \lambda(x-y)} V^{(D_{k, 0})}(y)dy - \int_{x_{k-1}}^x e^{-2i \lambda(x-y)}V^{(D_{k, 0})}(y)dy \\
        +&\; \sum_{p=1}^{k-1} \int_{x_{p-1}}^{x_p} e^{-2i \lambda(x-y)}(Vf_-)^{(D_{k, 0})}(y)dy + \int_{x_{k-1}}^x e^{-2i \lambda(x-y)}(Vf_-)^{(D_{k, 0})}(y)dy.
    \end{align*}
    Differentiate and then integrate by parts to get 
    \begin{align*}
        \partial_x^{2 + D_{k, 0}}f_-(x, \lambda) =&\; - V^{(D_{k, 0})}(x_0 +)e^{-2i \lambda(x - x_0)} + \sum_{p=1}^{k-1}\left(V^{(D_{k, 0})}(x_p -) - V^{(D_{k, 0})}(x_p+) \right)e^{-2i\lambda(x - x_p)}\\
        &\; -\sum_{p=1}^{k-1} \int_{x_{p-1}}^{x_p} e^{-2i \lambda(x-y)}V^{(D_{k, 0} + 1)}(y)dy - \int_{x_{k-1}}^x e^{-2i \lambda(x-y)}V^{(D_{k, 0} + 1)}(y) dy \\
        &\; +(Vf_-)^{(D_{k, 0})}(x_0+)e^{-2i \lambda(x - x_0)} - \sum_{p=1}^{k-1} \left((Vf_-)^{(D_{k, 0})}(x_p-) - (Vf_-)^{(D_{k, 0})}(x_p+) \right)e^{-2i \lambda(x - x_p)}\\
        +&\; \sum_{p=1}^{k-1} \int_{x_{p-1}}^{x_p}e^{-2i \lambda(x-y)} (Vf_-)^{(D_{k, 0} + 1)}(y)dy + \int_{x_{k-1}}^x e^{-2i \lambda(x-y)}(Vf_-)^{(D_{k, 0} + 1)}(y)dy.
    \end{align*}
    But $V^{(D_{k, 0})}(x_0+)= 0$ unless $\nu_0 = D_{k, 0}$, and $V^{(D_{k, 0})}(x_p -) = V^{(D_{k, 0})}(x_p +)$ unless $\nu_p = D_{k, 0}$ so using the induction hypothesis to bound the integral terms, one easily produces from this the case $\mu = 1$ of (ii) in the lemma. 

    The remaining bounds are produced in exactly the same way, and we conclude the proof. 
\end{proof}

In the same way as in \cite[Section 3.1]{7}, we use this to obtain asymptotic information on the reflection coefficients $\rho_\pm$

\begin{lm}\label{lem:4.3}
    For $V \in \mathcal U(\set{x_k}, \set{\nu_k})$ with $\operatorname{supp} V = [x_0, x_N]$ we have 
    \begin{equation*}
        \rho_\pm (-\lambda) = \widehat{V}(\pm 2\lambda) + \sum_{p = 0}^N e^{\mp 2i \lambda x_p} O(\abs{\lambda}^{-\nu_p - 2}).
    \end{equation*}
\end{lm}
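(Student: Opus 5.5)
From \eqref{eq:404}, $\rho_-(-\lambda) = \widehat V(-2\lambda)\cdot$(up to the paper's Fourier convention) $ - \int e^{2i\lambda x}V(x) f_-(x,\lambda)\,dx$, where $f_- = f_-^V$; I treat $\rho_-$, and $\rho_+$ is symmetric using the analogue of Lemma \ref{lem:4.2} for $f_+^V$. So the task is to show
\begin{equation*}
\int_{x_0}^{x_N} e^{2i\lambda x}V(x)f_-(x,\lambda)\,dx=\sum_{p=0}^N e^{2i\lambda x_p}O(\abs{\lambda}^{-\nu_p-2}).
\end{equation*}
I split the integral into the pieces over $[x_{k-1},x_k]$ and integrate by parts repeatedly on each piece, always integrating the exponential $e^{2i\lambda x}$ and differentiating the product $Vf_-$.

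Each integration by parts produces boundary terms $\frac{(-1)^j}{(2i\lambda)^{j+1}}\,e^{2i\lambda x_p}\,(Vf_-)^{(j)}(x_p\pm)$. Summed over adjacent intervals, these give the jumps $(Vf_-)^{(j)}(x_p+)-(Vf_-)^{(j)}(x_p-)$. By Lemma \ref{lem:4.2}, $f_-\in\mathcal U(\set{x_k},\set{\nu_k+2})$, and $V\in\mathcal U$ with jump orders $\nu_p$. So $Vf_-$ has continuous derivatives through order $\nu_p-1$ at $x_p$. The first surviving boundary term at $x_p$ is therefore $j=\nu_p$, with coefficient $\abs{\lambda}^{-\nu_p-1}$ times $(V^{(\nu_p)}\text{ jump})\cdot f_-(x_p,\lambda)$. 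Since $f_-=O(\abs{\lambda}^{-1})$ by Lemma \ref{lem:4.2}(i), the total is $e^{2i\lambda x_p}O(\abs{\lambda}^{-\nu_p-2})$, which is exactly the required form. Higher $j$ terms gain further powers of $\lambda^{-1}$, provided the derivatives of $f_-$ do not grow too fast.

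The main obstacle is that Lemma \ref{lem:4.2}(ii) says the higher derivatives of $f_-$ are not uniformly $O(\abs{\lambda}^{-1})$. They contain oscillatory pieces $e^{-2i\lambda(x-x_l)}O(\abs{\lambda}^{d-D_{k,m-1}-2})$ which grow in $\lambda$ and carry a phase that cancels $e^{2i\lambda x}$. I will therefore decompose $f_-$ on $[x_{k-1},x_k]$ according to Lemma \ref{lem:4.2}: a non-oscillatory part with derivatives $O(\abs{\lambda}^{-1})$, plus the terms $e^{-2i\lambda(x-x_l)}g_{l,m}(x,\lambda)$ with $x_l\in X_{k,m}$.

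For the non-oscillatory part I integrate by parts up to order $M_k$. Boundary terms at $x_p$ with $j\ge\nu_p$ give $e^{2i\lambda x_p}O(\abs{\lambda}^{-\nu_p-2})$. The final remainder integral is $O(\abs{\lambda}^{-M_k-1})$ with no phase at an endpoint. Since $M_k\ge\nu_p+1$ for both endpoints of the interval, and $\abs{e^{2i\lambda x}}\le \abs{e^{2i\lambda x_k}}$ on $[x_{k-1},x_k]$ when $\operatorname{Im}\lambda\le0$, the remainder can be absorbed into $e^{2i\lambda x_k}O(\abs{\lambda}^{-\nu_k-2})$.

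For the oscillatory terms, the product $e^{2i\lambda x}e^{-2i\lambda(x-x_l)}=e^{2i\lambda x_l}$ factors out exactly. What remains is $e^{2i\lambda x_l}\int V g_{l,m}\,dx$, where $g_{l,m}$ is of size $O(\abs{\lambda}^{-D_{k,m-1}-2})$ at level $d=0$ (reading off the exponent with $d=0$). Because $x_l\in X_{k,m}$ means $\nu_l=D_{k,m-1}$, this is $e^{2i\lambda x_l}O(\abs{\lambda}^{-\nu_l-2})$, again of the required form.

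The delicate bookkeeping is checking that the lemma's derivative-level expansions really give such a decomposition of $f_-$ itself with these amplitude bounds. I would do this by integrating the $d=2+D_{k,m-1}$ expansion back down, or by reusing directly the integral equation \eqref{eq:406}. I expect this step to be the crux.
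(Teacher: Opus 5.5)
Your skeleton is the paper's: split at the $x_p$, integrate by parts, cancel the boundary terms with $j<\nu_p$ using continuity of $(Vf_-)^{(j)}$ across $x_p$, and use the phase identity $e^{2i\lambda x}e^{-2i\lambda(x-x_l)}=e^{2i\lambda x_l}$. However, the step you call the crux is left undone, and Lemma~\ref{lem:4.2} does not supply it. Your treatment of the oscillatory terms needs, on each $[x_{k-1},x_k]$, a decomposition of the function itself, $f_-=F+\sum_l e^{-2i\lambda(x-x_l)}g_{l,m}$, with every derivative of $F$ equal to $O(\abs{\lambda}^{-1})$ and $g_{l,m}=O(\abs{\lambda}^{-\nu_l-2})$. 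Lemma~\ref{lem:4.2} gives only separate pointwise bounds for each $\partial_x^d f_-$. Its part (ii) is asserted only for $d\ge 2+D_{k,m-1}$, and at $d=0$ you have only (i), with no phase structure. So ``reading off the exponent with $d=0$'' is not licensed. Nor can you integrate the $d=2+D_{k,m-1}$ expansion back down using the lemma alone. Without control on $x$-derivatives of the amplitudes, $\int_{x_{k-1}}^x e^{-2i\lambda(y-x_l)}a(y)\,dy$ is only bounded by a multiple of $\abs{e^{-2i\lambda(x_{k-1}-x_l)}}$. That exceeds $\abs{e^{-2i\lambda(x-x_l)}}$ by the factor $e^{2\abs{\operatorname{Im}\lambda}(x-x_{k-1})}$ and in general gains no power of $\abs{\lambda}^{-1}$. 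You would in effect have to re-prove a structurally stronger version of Lemma~\ref{lem:4.2} from \eqref{eq:406}. A per-interval decomposition also spoils the exact cancellation you rely on: the non-oscillatory parts on the two sides of $x_p$ need not agree there; only $f_-$ does.

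The decomposition is unnecessary. On $[x_{k-1},x_k]$ integrate by parts exactly $J=M_k$ times, and apply Lemma~\ref{lem:4.2} pointwise only where derivatives of $f_-$ actually occur. These are the boundary terms $(2i\lambda)^{-j-1}e^{2i\lambda x_p}(Vf_-)^{(j)}(x_p\pm)$, $j<J$, and the remainder $(2i\lambda)^{-J}\int_{x_{k-1}}^{x_k}e^{2i\lambda x}(Vf_-)^{(J)}\,dx$. An oscillatory piece of $\partial_x^d f_-$ contributes $e^{2i\lambda x_l}O(\abs{\lambda}^{d-j-\nu_l-3})$ to a boundary term (where $d\le j$). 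It contributes $e^{2i\lambda x_l}O(\abs{\lambda}^{d-J-\nu_l-2})$ to the remainder (where $d\le J$). The phase identity holds pointwise under the integral, so no further integration by parts is needed. Both contributions lie in $e^{2i\lambda x_l}O(\abs{\lambda}^{-\nu_l-2})$, since $\nu_l=D_{k,m-1}$ for $x_l\in X_{k,m}$.

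You already handle the non-oscillatory $O(\abs{\lambda}^{-1})$ pieces correctly. For the boundary terms, $j\ge\nu_p$ suffices. For the remainder, use $M_k\ge\nu_k+1$ and $\abs{e^{2i\lambda x}}\le\abs{e^{2i\lambda x_k}}$. The terms with $j<\nu_p$ cancel exactly between the two adjacent intervals, each of which is integrated by parts at least $\nu_p+1$ times. This works because the full function $(Vf_-)^{(j)}$ is continuous at $x_p$, and vanishes at $x_0+$ (for $j<\nu_0$) and at $x_N-$ (for $j<\nu_N$). This is exactly what the paper means by integrating by parts an appropriate number of times, noting cancellations, and then appealing to Lemma~\ref{lem:4.2}.
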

\begin{proof}
    The definition of $\rho_-$ in (\ref{eq:404}) gives 
    \begin{align*}
        \rho_-(-\lambda) =&\; \widehat{V}(-2\lambda) - \int_{x_0}^{x_N} e^{2i\lambda x} V(x)f_-(x, \lambda) dx\\
        =&\; \widehat{V}(-2\lambda) - \sum_{p = 0}^{N-1} \int_{x_p}^{x_{p+1}} e^{2i\lambda x} V(x)f_-(x, \lambda)dx.
    \end{align*}
    Integrating by parts an appropriate number of times in each term, noting cancellations due to regularity of the integrand across the $x_p$, and then appealing to Lemma \ref{lem:4.2}, we obtain the desired expansion for $\rho_-$. An analogous proof gives the expansion for $\rho_+$. 
\end{proof}

\subsection{Proofs of Theorems \ref{thm:1.6} and \ref{thm:1.7}}\label{sec:4.2}
A straightforward integration by parts argument allows us to show that for $V \in \mathcal V$, we have 
\begin{equation*}
    \widehat V(\pm 2\lambda) = \sum_{p=0}^N \frac{V^{(\nu_p)}(x_p+) - V^{(\nu_p)}(x_p-)}{(\pm 2i \lambda)^{\nu_p + 1}}e^{\mp 2i \lambda x_p}(1 + o(1))
\end{equation*}
in $\operatorname{Im} \lambda \leq 0$. Using this, Lemma \ref{lem:4.3}, and the expansion of the scattering determinant, (\ref{eq:405}), we obtain that for $C_{p, m} := (-1)^{\nu_p + 1}(V^{(\nu_p)}(x_p+) - V^{(\nu_p)}(x_p-)) \cdot (V^{(\nu_m)}(x_m +) - V^{(\nu_m)}(x_m-))$, we have 
\begin{equation*}
    \det S_V(-\lambda) = 1 + \sum_{m=0}^N\sum_{p = m+1}^N \frac{C_{p, m}}{\lambda^{\nu_p + \nu_m + 4}}e^{2i \lambda(x_p - x_m)} (1 + o(1)) + O(\abs{\lambda}^{-1}). \tag{3.7} \label{eq:407}
\end{equation*}

\begin{remark}\label{remark:1}
    It is here that condition (iii) from Definition \ref{def:1.5} plays its role. If for some $T \geq 1$, the lengths of some subintervals $\set{[x_{m_t}, x_{p_t}]}_{t = 0}^T$ are all equal, and if also the corresponding $\set{\nu_{m_t} + \nu_{p_t}}_{t=0}^T$ are all equal, then without condition (iii) we \emph{could} have a term of the form 
    \begin{equation*}
        \frac{e^{2i \lambda (x_{p_0} - x_{m_0})}}{\lambda^{\nu_{p_0} + \nu_{m_0} + 4}} o(1)
    \end{equation*}
    in (\ref{eq:407}) and our analysis below would not apply. However, we note that even if this happened, if there were some interval $[x_{\overline m}, x_{\overline p}]$ with $x_{\overline p} - x_{\overline m} \geq x_{p_0} - x_{m_0}$ and $\nu_{\overline p} + \nu_{\overline m} < \nu_{p_0} + \nu_{m_0}$, then 
    \begin{equation*}
        \frac{e^{2i \lambda(x_{p_0} - x_{m_0})}}{\lambda^{\nu_{p_0} + \nu_{m_0} + 4}} =  \frac{e^{2i \lambda(x_{\overline p} - x_{\overline m})}}{\lambda^{\nu_{\overline p} + \nu_{\overline m} + 4}}o(1)
    \end{equation*}
    in $\operatorname{Im} \lambda \leq 0$, so the terms corresponding to the intervals $\set{[x_{m_t}, x_{p_t}]}_{t=0}^T$ \emph{absorb} into the one corresponding to $[x_{\overline m}, x_{\overline p}]$ and the requirement in (iii) is then unnecessary (see Section \ref{sec:4.3} for more on this).
\end{remark}

\begin{remark}\label{remark:2}
    For large enough $M> 0$, it's easy to see that 
    \begin{equation*}
        \abs{\lambda^{\nu_N + \nu_0 + 4} e^{-2i \lambda(x_N - x_0)}} = o(1)
    \end{equation*}
    and 
    \begin{equation*}
        C_{N, 0}^{-1} \lambda^{\nu_N + \nu_0 + 4}e^{-2i \lambda(x_N - x_0)} \sum_{m=0}^N \sum_{p = m+1}^N \frac{C_{p, m}}{\lambda^{\nu_p + \nu_m + 4}}e^{2i\lambda(x_p - x_m)}(1 + o(1)) = 1 + o(1)
    \end{equation*}
    as $\abs{\lambda} \to \infty$ through $\set{\operatorname{Im} \lambda \leq -M \log(1 + \abs{\lambda}), \operatorname{Re} \lambda >0}$. This shows that in the same region, 
    \begin{equation*}
        C_{N, 0}^{-1} \lambda^{\nu_N + \nu_0 + 4}e^{-2i \lambda(x_N - x_0)} \det S_V(-\lambda) = 1 + o(1)
    \end{equation*}
    so there are at most finitely many resonances in $\set{\operatorname{Im}\lambda \leq -M \log(1 + \abs{\lambda}), \operatorname{Re} \lambda > 0}$. This observation is used in the proof of Theorem \ref{thm:1.7}. 
\end{remark}

The next lemma implies that $\det S_V(- \lambda)$ has very nice asymptotic behavior along logarithmic curves. 

\begin{lm}\label{lem:4.4}
    Let $M \in \N, \alpha_m \geq 0, A_m \in \R, C_m \in \C/ \set{0}$, and 
    \begin{equation*}
        f(\lambda) = 1 + \sum_{m=1}^M \frac{C_m}{\lambda^{\alpha_m}} e^{2i \lambda A_m}(1 + o_m(1)) + o(1) \quad \text{in } \operatorname{Im} \lambda \leq 0, \operatorname{Re} \lambda > 1. \tag{3.8} \label{eq:308}
    \end{equation*}
    Then the limit 
    \begin{equation*}
        \lim_{r \to \infty} \frac{\log \abs{ f\left( re^{i \theta_t(r)} \right)}}{\log r} \tag{3.9} \label{eq:309}
    \end{equation*}
    exists for all but finitely many $t \in (0, \infty)$. 
\end{lm}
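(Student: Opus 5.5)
Along the curve $\lambda = re^{i\theta_t(r)}$ we have $\operatorname{Im}\lambda = -t\log(1+r)$ and $\operatorname{Re}\lambda = r(1+o(1))$. The plan is to compute the size of each term in \eqref{eq:308} explicitly along this curve. For the $m$-th term,
\begin{equation*}
\log\abs{\frac{C_m}{\lambda^{\alpha_m}}e^{2i\lambda A_m}} = \log\abs{C_m} - \alpha_m\log r - 2A_m\operatorname{Im}\lambda + o(1) = \left(2A_m t - \alpha_m\right)\log r + O(1),
\end{equation*}
using $\log(1+r) = \log r + o(1)$. Hence the $m$-th term has modulus $r^{L_m(t)+o(1)}$, where $L_m(t) := 2A_m t - \alpha_m$ is affine in $t$. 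We also set $L_0(t) :\equiv 0$ for the leading constant $1$, and define
\begin{equation*}
L(t) := \max_{0\le m\le M} L_m(t),
\end{equation*}
which is convex and piecewise affine. Our goal is to show that the limit \eqref{eq:309} equals $L(t)$ at every $t$ outside a finite exceptional set.

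First we isolate the exceptional set. Any two distinct affine functions $L_m, L_{m'}$ coincide at no more than one $t$, and there are finitely many pairs. We take $E$ to be the finite set of all $t>0$ at which two non-identical lines $L_m\neq L_{m'}$ agree. For $t\notin E$, the lines attaining the maximum in $L(t)$ all have the same slope and intercept, i.e.\ they share the same $(A_m,\alpha_m)$, and this group is strictly above every other line by some gap $\eta(t)>0$.

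Next we group the terms at a fixed $t\notin E$ and analyse two cases. Let $G$ be the set of indices $m$ with $(A_m,\alpha_m)=(A,\alpha)$ maximizing at $t$.

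\emph{Case $G=\{0\}$, or $G$ consists of $m\geq 1$ with $(A,\alpha)=(0,0)$.} Then all other terms are $O(r^{-\eta+o(1)})=o(1)$, so $f = 1 + \sum_{m\in G}C_m(1+o(1)) + o(1)$. This equals $1+\sum_G C_m + o(1)$, and the limit \eqref{eq:309} is $0$ provided $1+\sum_G C_m\neq 0$.

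\emph{Case $G$ consists of indices $m\geq1$ with common $(A,\alpha)\ne(0,0)$, and $L(t)>0$.} Then
\begin{equation*}
f(\lambda) = \lambda^{-\alpha}e^{2i\lambda A}\left[\sum_{m\in G}C_m + o(1)\right] + O\!\left(r^{L(t)-\eta+o(1)}\right) + O(1) + o(1).
\end{equation*}
Since $L(t)>0$, the $O(1)$ is also dominated. So $\log\abs{f}/\log r\to L(t)$, provided $\sum_G C_m\neq 0$.

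The main obstacle is exactly this nondegeneracy: if $\sum_{m\in G}C_m=0$, the $o(1)$ errors could dominate, and the limit need not exist. I would handle it by first merging terms in the hypothesis with identical $(A_m,\alpha_m)$. If the combined coefficient vanishes, the merged term is $\lambda^{-\alpha}e^{2i\lambda A}o(1)$, which is negligible relative to the other terms and can be dropped (absorbed into the $o(1)$ of a dominating term). I would then either assume the merged coefficients are nonzero or observe that dropped groups are irrelevant wherever another line strictly dominates them. This reflects the role of condition (iii) of Definition \ref{def:1.5} and Remark \ref{remark:1}; for $\det S_V$ it holds by \eqref{eq:407}. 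There is one more subtle point: the group containing the constant $1$ (the line $L_0=0$) may merge with terms having $A_m=0=\alpha_m$. We handle it the same way, and any point where a vanishing sum occurs for a dominating line is added to $E$. After this reduction the limit exists and equals $L(t)$ for all $t\in(0,\infty)\setminus E$.
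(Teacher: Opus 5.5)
Your argument is correct under the nondegeneracy hypothesis discussed below, but it is organized differently from the paper's.

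\textbf{How the two routes differ.} The paper inducts on $M$. It first discards terms with $A_m\le 0$, which decay along every log curve or are constants removed by dividing. It then singles out the term with minimal $\alpha_M$ (and maximal $A_M$ among ties) and splits at $T=\alpha_M/(2A_M)$. For $t<T$ that term is absorbed into the $o(1)$. For $t>T$ it is factored out, and the induction hypothesis is applied to the bracket $1+C_M^{-1}\lambda^{\alpha_M}e^{-2i\lambda A_M}g_{M-1}+o(1)$.

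You instead compute all growth exponents $L_m(t)=2A_mt-\alpha_m$ at once and read the limit off the upper envelope $L(t)=\max(0,\max_m L_m(t))$. Off the finitely many crossing points, the top group is separated from the rest by a gap $\eta(t)>0$, and that is all you need.

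\textbf{What each route buys.} Yours is non-inductive and needs no preliminary reduction to $A_m>0$. It also gives the limit explicitly as a convex piecewise-affine function. Its only possible kinks are the crossings $t=\alpha_m/(2A_m)$ and $t=(\alpha_m-\alpha_{m'})/(2(A_m-A_{m'}))$, and the slope jump at a kink is twice the difference of the $A$'s of the envelope lines on either side. This is exactly the information Remark~\ref{rem:3} and Section~\ref{sec:4.3} extract afterwards for Theorem~\ref{thm:1.8}.

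Your route also avoids a loose point in the paper's induction step. The bracket above has the form \eqref{eq:308} only along curves with $t>T$, because it contains $C_M^{-1}\lambda^{\alpha_M}e^{-2i\lambda A_M}$, which is $o(1)$ only there. So the induction hypothesis has to be read in a relaxed form. The paper's peeling scheme, on the other hand, is the same mechanism it reuses in Section~\ref{sec:4.3} and in Theorem~\ref{thm:5.3}.

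\textbf{The degenerate case.} Keep only your first option, the nonvanishing assumption. Your fallback of dropping vanishing groups and adding the offending points to $E$ does not work. A group whose merged coefficient vanishes contributes $\lambda^{-\alpha}e^{2i\lambda A}o(1)$, whose size is uncontrolled. Its line can be the top of the envelope on a whole interval of $t$, so no finite $E$ suffices.

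In fact the lemma is false in that case. Take $M=1$, $(A_1,\alpha_1)=(0,0)$, $C_1=-1$; then any $o(1)$ function has the required form, e.g.\ $f(\lambda)=\lambda^{-2-\sin\log\log\lambda}$, and for it the limit fails for every $t$. So nonvanishing of the merged coefficients, including $1+\sum_{(A_m,\alpha_m)=(0,0)}C_m$, must be a hypothesis.

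The paper assumes the same thing implicitly. Its induction needs the choice of $(\alpha_M,A_M)$ to ``uniquely select the term'', and its reduction divides by that constant. In the application this is supplied by condition (iii) of Definition~\ref{def:1.5} through the merging in \eqref{eq:413}, where moreover every $A=x_p-x_m$ is positive.
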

\begin{proof}

    We first note that if $A_m \leq 0$ for some $m$ in \eqref{eq:308}, then the corresponding term either decays along every $\log$ curve in the lower half-plane or is constant. In the latter case we divide \eqref{eq:308} by a constant to produce a new function of the form \eqref{eq:308} such that the limit \eqref{eq:309} exists for the new function if and only if it exists for the old one. Thus, we may assume the $A_m>0$. 

    The proof is by induction on $M$, and we begin with the case in which 
    \begin{equation*}
        f(\lambda) = 1 + \frac{C}{\lambda^\alpha}e^{2i \lambda A}(1 + o(1)) + o(1) \quad \text{in } \operatorname{Im} \lambda \leq 0, \operatorname{Re} \lambda > 1.
    \end{equation*}
    Set $T = \frac{\alpha}{2A}$ and notice that for $r \to \infty$ and $\lambda = re^{i \theta_t(r)}$, we have 
    \begin{equation*}
        \abs{\lambda^{-\alpha}e^{2i\lambda A}} = o(1) \quad \text{or} \quad \abs{\lambda^\alpha e^{-2i \lambda A}} = o(1) 
    \end{equation*}
    according to $t < T$ or $t > T$, respectively. In the first case the limit (\ref{eq:309}) is zero. In the second, we pull out the $C\lambda^{-\alpha} e^{2i \lambda A}$ term to write 
    \begin{equation*}
        f(\lambda) = \frac{C}{\lambda^\alpha} e^{2i \lambda A}(1 + o(1)),
    \end{equation*}
    whence 
    \begin{equation*}
        \log \abs{f(re^{i \theta_t(r)})} = 2At \log(1 + r) - \alpha \log r + \log \abs{C} + o(1)
    \end{equation*}
    so the limit (\ref{eq:309}) exists for all $t \neq T$, proving the case $M = 1$. 
    
    Assuming the case $M -1$ proven, for $f$ as in (\ref{eq:308}) we may assume that the term $\frac{C_M}{\lambda^{\alpha_M}}e^{2i\lambda A_M}$ is such that $\alpha_M = \min_m \set{\alpha_m}$. If more than one $\alpha_m$ is minimal, we also assume $A_M$ is maximal amongst those terms, and this uniquely selects the term $C_M\lambda^{-\alpha_M} e^{2i\lambda A_M}$. We write 
    \begin{equation*}
        f(\lambda) = g_{M - 1}(\lambda) + \frac{C_M}{\lambda^{\alpha_M}}e^{2i \lambda A_M}(1 + o(1)) \tag{3.10} \label{eq:310}
    \end{equation*}
    where $g_{M-1}(\lambda)$ satisfies the induction hypothesis. Set $T = \frac{\alpha_M}{2A_M}$. Similarly to part one, we have 
    \begin{equation*}
        \abs{\lambda^{-\alpha_M}e^{2i \lambda A_M}} = o(1) \quad \text{or} \quad \abs{\lambda^{\alpha_M}e^{-2i \lambda A_M}} = o(1)
    \end{equation*}
    as $r \to \infty$ with $\lambda = re^{i\theta_t(r)}$ according to $t < T$ or $t > T$. In the first case, we absorb the second term in (\ref{eq:310}) into $g_{M-1}$ and the limit (\ref{eq:309}) exists for all but finitely many $t < T$ by the induction hypothesis. In the second case, we write 
    \begin{equation*}
        f(\lambda) = \frac{C_M}{\lambda^{\alpha_M}}e^{2i \lambda A_M}\left(1 + C_M^{-1}\lambda^{\alpha_M} e^{-2i \lambda A_M}g_{M-1}(\lambda) + o(1) \right).
    \end{equation*}
    By our choice of the $\alpha_M, A_M$, the factor in the parentheses is a function satisfying the induction hypothesis, and therefore the limit (\ref{eq:309}) exists for all but finitely many $t > T$ as well. This proves the lemma. 
\end{proof}

\begin{remark}\label{rem:3}
    Lemma \ref{lem:4.4} and (\ref{eq:407}) show that 
    \begin{equation*}
        \lim_{r \to \infty} \frac{\log \abs{\det S_V(-re^{i \theta_t(r)})}}{\log r} = \ell_V(t) \tag{3.11} \label{eq:411}
    \end{equation*}
    exists for all but finitely many $t$. The proof above shows moreover that the $t$ for which the limit (\ref{eq:411}) does not exist must coincide with the discontinuities of $\ell_V'$. 
\end{remark}

As a corollary, we may now give the proof of Theorem \ref{thm:1.7}. 

\begin{proof}[Proof of Theorem \ref{thm:1.7}]
    Remark \ref{remark:2} after (\ref{eq:407}) shows that there exists $M > 0$ so that all but finitely many resonances in $\set{\operatorname{Re} \lambda \geq 0}$ belong to the region $\set{\operatorname{Im}\lambda \geq -M \log(1 + \abs{\lambda})} $. 

    Now let $t_1$, be a point at which $\ell_V'(t_1)$ exists. By the above considerations, the limit in (\ref{eq:411}) also exists, and it follows from the fact that discontinuities of $\ell_V'$ are isolated that there exists $\varepsilon > 0$ such that the limit (\ref{eq:411}) exists for all $t \in [t_1 - \varepsilon, t_1 + \varepsilon]$. In particular, we see that for any $t_1$ for which $\ell_V'(t_1)$ exists, there exists $\varepsilon > 0$ and $C > 0$ such that there are no resonances in the set 
    \begin{equation*}
        \set{\lambda \in \C \colon -(t_1 + \varepsilon) \log(1 + \abs{\lambda}) \leq \operatorname{Im} \lambda \leq -(t_1 - \varepsilon) \log(1 + \abs{\lambda}), \operatorname{Re} \lambda \geq 0, \abs{\lambda} > C}.
    \end{equation*}
    The conclusion of the theorem follows from these considerations. 
\end{proof}

The proof of Theorem \ref{thm:1.6} is also now very easy:

\begin{proof}[Proof of Theorem \ref{thm:1.6}]
By Remark \ref{rem:3} after Lemma \ref{lem:4.4}, and application of Theorem \ref{thm:1.3}, we have 
\begin{equation*}
    \lim_{r \to \infty} \frac{\# \set{\lambda_j \in \mathcal R_V: -s\log(1 + \abs{\lambda_j}) > \operatorname{Im} \lambda_j > -t\log(1 + \abs{\lambda_j}), \operatorname{Re} \lambda_j > 0, \abs{\lambda_j} < r}}{r} = \frac{1}{2\pi}\left(\ell_V'(t) - \ell_V'(s) \right)
\end{equation*}
for any $0 < s < t$ for which $\ell_V'(s), \ell_V'(t)$ exist; the theorem follows from this by a simple argument. 
    
\end{proof}

\subsection{Proof of Theorem \ref{thm:1.8}}\label{sec:4.3}
Having proven Theorems \ref{thm:1.6} and \ref{thm:1.7}, we now take a more careful look at the expansion in (\ref{eq:407}) to obtain more precise information about the connection between the parameters defining the singularities of $V$, discontinuities of $\ell_V'$, and the location and density of the strings of resonances, and to give the proof of Theorem \ref{thm:1.8}. 

In Remark \ref{remark:1} after the expansion (\ref{eq:407}) we noted a situation in which certain terms in (\ref{eq:407}) may absorb into a more dominant one. We now discuss this in a bit more detail. If for some pairs $(p_0, m_0), (p_1, m_1)$ with $0 \leq m_i < p_i \leq N$, we have $x_{p_0} - x_{m_0} = x_{p_1} - x_{m_1}$, then there are just three cases: 

\begin{enumerate}[label=\roman*)] 
        \item If $\nu_{p_0} + \nu_{m_0} + 4 < \nu_{p_1} + \nu_{m_1} + 4$, then 
        \begin{equation*}
            \lambda^{-\nu_{p_1} - \nu_{m_1} - 4}e^{2i\lambda(x_{p_1} - x_{m_1})} = \lambda^{-\nu_{p_0} - \nu_{m_0} - 4}e^{2i\lambda(x_{p_0} - x_{m_0})}o(1)
        \end{equation*}
        in $\operatorname{Im}\lambda \leq 0$, so we may absorb the term in (\ref{eq:407}) corresponding to $(p_1, m_1)$ into the one for $(p_0, m_0)$. 
        
        \item If $\nu_{p_0} + \nu_{m_0} + 4 = \nu_{p_1} + \nu_{m_1} + 4$, then we let $\set{(p_t, m_t)}_{t=0}^T$ be all the pairs with $x_{p_t} - x_{m_t} = x_{p_0} - x_{m_0}$ and $\nu_{p_t} + \nu_{m_t} = \nu_{p_0} + \nu_{m_0}$. Then the terms corresponding to $\set{(p_t, m_t)}_{t = 0}^T$ merge into the one for $(p_0, m_0)$, with a new constant $= \sum_{t=0}^T C_{p_t, m_t}$ (which is nonzero by condition (iii) of  Definition \ref{def:1.5}).

        \item If $\nu_{p_0} + \nu_{m_0} + 4 > \nu_{p_1} + \nu_{m_1} + 4$, we absorb the $(p_0, m_0)$ term into the $(p_1, m_1)$ term, similarly to (i). 
    \end{enumerate}
The point of this consideration is that in each case, there is actually only one contributing pair $(p_0, m_0)$ in the expansion (\ref{eq:407}) with the length $x_{p_0} - x_{m_0} = x_{p_1} - x_{m_1}$ in the exponential factor; clearly this argument also works for any finite number of pairs and we may therefore define the following equivalence classes of pairs:
\begin{equation*}
    [(p, m)] := \set{(p', m') \colon x_{p'} - x_{m'} = x_p - x_m \text{ and } \nu_{p'} + \nu_{m'} = \nu_p + \nu_m}. \tag{3.12} \label{eq:412}
\end{equation*}
We also define constants 
\begin{equation*}
    C_{[(p, m)]} := \sum_{(p', m') \in [(p, m)]} C_{p', m'}.
\end{equation*}
If we now let $S$ be a set of pairs containing one and only one representative from each equivalence class, (\ref{eq:412}), then we may rewrite (\ref{eq:407}) as 
\begin{equation*}
    \det S_V(-\lambda) = 1 + \sum_{(p, m) \in S} \frac{C_{[(p, m)]}}{\lambda^{\nu_p + \nu_m + 4}}e^{2i \lambda (x_p - x_m)}(1 + o(1)) + O(\abs{\lambda}^{-1}). \tag{3.13} \label{eq:413}
\end{equation*}

Having established this, we may now begin the proof of Theorem \ref{thm:1.8}. With $t_0 := \min_{0 \leq m < p \leq N} \frac{4 + \nu_p + \nu_m}{2(x_p - x_m)}$ as in the statement of Theorem \ref{thm:1.8}, we find that for any $0 \leq m < p \leq N, \lambda^{-(4 + \nu_p + \nu_m)}e^{2i\lambda(x_p - x_m)} = o(1)$ as $r \to \infty$ with $ \lambda = re^{i \theta_t(r)}$ and $t < t_0$. From (\ref{eq:413}), $\det S_V(-re^{i \theta_t(r)}) = 1 + o(1)$ there and hence $\ell_V(t) \equiv 0$ for $t < t_0$. 

Let $S_0$ denote the set of all pairs $(p, m) \in S$ with 
\begin{equation*}
    \frac{4 + \nu_p + \nu_m}{2(x_p - x_m)} = t_0
\end{equation*}
(there is at least one - and maybe only one - such pair). Let $(p_0, m_0) \in S_0$ be the pair such that $x_{p_0} - x_{m_0}$ (hence also $4 + \nu_{p_0} + \nu_{m_0}$) is maximized amongst the pairs in $S_0$ (here we used our preliminary considerations about uniqueness of these lengths). Using that for any other pair $(p, m) \in S_0$ (if there is one), we also have 
\begin{equation*}
    \frac{\nu_{p_0} + \nu_{m_0} - (\nu_p + \nu_m)}{2[(x_{p_0} - x_{m_0}) - (x_p - x_m)]} = t_0,
\end{equation*}
we see that as $r \to \infty$ with $\lambda = re^{i \theta_t(r)}$ and $t > t_0$, 
\begin{equation*}
    \abs{\lambda^{4 + \nu_{p_0} + \nu_{m_0}}e^{-2i\lambda (x_{p_0} - x_{m_0})}} = o(1)
\end{equation*}
and 
\begin{equation*}
    \abs{\lambda^{\nu_{p_0} + \nu_{m_0} - (\nu_p + \nu_m)}e^{-2i\lambda [(x_{p_0} - x_{m_0}) - (x_p - x_m)]}} = o(1).
\end{equation*}
From (\ref{eq:413}), we obtain that as $r \to \infty$ with $\lambda = re^{i \theta_t(r)}$ and $t > t_0$,
\begin{align*}
    C_{[(p_0, m_0)]}^{-1} \lambda^{4 + \nu_{p_0} + \nu_{m_0}} &e^{-2i \lambda(x_{p_0} - x_{m_0})}\det S_V(-\lambda)\\
    =&\; 1 + \sum_{(p, m) \in S/S_0} \frac{C_{[(p_0, m_0)]}^{-1}C_{[(p, m)]}}{\lambda^{\nu_p + \nu_m - (\nu_{p_0} + \nu_{m_0})}}e^{2i \lambda [(x_p - x_m) - (x_{p_0} - x_{m_0})]}(1 + o(1)) + o(1). \tag{3.14} \label{eq:414}
\end{align*}
Now, for $(p, m) \in S/S_0$, the fact that $(4 + \nu_{p_0} + \nu_{m_0})/2(x_{p_0} - x_{m_0}) < (4 + \nu_p + \nu_m)/2(x_p - x_m)$ implies that 
\begin{equation*}
    t_0 < \frac{\nu_p + \nu_m - (\nu_{p_0} + \nu_{m_0})}{2[(x_p - x_m) - (x_{p_0} - x_{m_0})]}.
\end{equation*}
If we define 
\begin{equation*}
    t_1 := \min_{(p, m) \in S/S_0} \frac{\nu_p + \nu_m - (\nu_{p_0} + \nu_{m_0})}{2[(x_p - x_m) - (x_{p_0} - x_{m_0})]}, \tag{3.15} \label{eq:415}
\end{equation*}
then it follow from (\ref{eq:414}) that as $r \to \infty$ with $\lambda = re^{i \theta_t(r)}$ and $t_0 < t < t_1$, we have 
\begin{equation*}
    C_{[(p_0, m_0)]}^{-1} \lambda^{4 + \nu_{p_0} + \nu_{m_0}}e^{-2i \lambda (x_{p_0} - x_{m_0})} \det S_V(-\lambda) = 1 + o(1)
\end{equation*}
and hence one calculates that 
\begin{equation*}
    \ell_V(t) = 2(x_{p_0} - x_{m_0})t - (4 + \nu_{p_0} + \nu_{m_0}), \quad t_0 < t < t_1.
\end{equation*}
This proves that the first discontinuity of $\ell_V'$ is indeed $t_0$, and has the form given in Theorem \ref{thm:1.8}. Moreover, we calculate $\ell_V'(t_0+) - \ell_V'(t_0-) = 2(x_{p_0} - x_{m_0})$. 

We plan to mimic the foregoing, using (\ref{eq:414}) in place of (\ref{eq:413}). We claim that the next discontinuity occurs at $t_1$ defined in (\ref{eq:415}). To see this, we let $S_1$ denote all the pairs $(p, m) \in S/S_0$ with 
\begin{equation*}
    \frac{\nu_p + \nu_m - (\nu_{p_0} + \nu_{m_0})}{2[(x_p - x_m) - (x_{p_0} - x_{m_0})]} = t_1,
\end{equation*}
and take $(p_1, m_1) \in S_1$ to be the pair such that $x_{o_1} - x_{m_1}$ is maximized. Using that for any other pair $(p, m) \in S_1$, we have 
\begin{equation*}
    \frac{\nu_{p_1} + \nu_{m_1} - (\nu_p + \nu_m)}{2[(x_{p_1} - x_{m_1}) - (x_p - x_m)]} = t_1
\end{equation*}
we see that as $r \to \infty$ with $\lambda = re^{i \theta_t(r)}$ and $t > t_1$, 

\begin{equation*}
    \abs{\lambda^{\nu_{p_1} + \nu_{m_1} - (\nu_{p_0} + \nu_{m_0})} e^{-2i \lambda [(x_{p_1} - x_{m_1}) - (x_{p_0} - x_{m_0})]}} = o(1)
\end{equation*}
and 
\begin{equation*}
    \abs{\lambda^{\nu_{p_1} + \nu_{m_1} - (\nu_{p} + \nu_{m})} e^{-2i \lambda [(x_{p_1} - x_{m_1}) - (x_{p} - x_{m})]}} = o(1).
\end{equation*}
Multiply \eqref{eq:414} by $C_{[(p_0, m_0)]}C^{-1}_{[(p_1, m_1)]}\lambda^{\nu_{p_1} + \nu_{m_1} - (\nu_{p_0} + \nu_{m_0})}e^{-2i \lambda[(x_{p_1} - x_{m_1}) - (x_{p_0} - x_{m_0})]}$ to find that as $r \to \infty$ with $\lambda = re^{i \theta_t(r)}$ and $t > t_1$, 

\begin{align*}
    C_{[(p_1, m_1)]}^{-1}& \lambda^{4 + \nu_{p_1} + \nu_{m_1}}e^{-2i\lambda(x_{p_1} - x_{m_1})}\det S_V(-\lambda)\\
    =&\; 1 + \sum_{(p, m) \in S/(S_0 \cup S_1)} \frac{C_{[(p_1, m_1)]}^{-1} C_{[(p, m)]}}{\lambda^{\nu_p + \nu_m - (\nu_{p_1} + \nu_{m_1})}}e^{2i \lambda[(x_p - x_m) - (x_{p_1} - x_{m_1})]}(1 + o(1)) + o(1) \tag{3.16} \label{eq:416}
\end{align*}

For $(p, m) \in S/(S_0 \cup S_1)$, we have 
\begin{equation*}
    t_1 < \frac{\nu_p + \nu_m - (\nu_{p_1} + \nu_{m_1})}{2[(x_p - x_m) - (x_{p_1} - x_{m_1})]}.
\end{equation*}
If we define 
\begin{equation*}
    t_2 := \min_{(p, m) \in S/(S_0 \cup S_1)} \frac{\nu_p + \nu_m - (\nu_{p_1} + \nu_{m_1})}{2[(x_p - x_m) - (x_{p_1} - x_{m_1})]},
\end{equation*}
then it follows from (\ref{eq:416}) that as $r \to \infty$ with $\lambda = re^{i \theta_t(r)}$ and $t_1 < t < t_2$, we have 
\begin{equation*}
    C_{[(p_1, m_1)]}^{-1} \lambda^{4 + \nu_{p_1} + \nu_{m_1}}e^{-2i \lambda (x_{p_1} - x_{m_1})} \det S_V(-\lambda) = 1 + o(1)
\end{equation*}
and hence 
\begin{equation*}
    \ell_V(t) = 2(x_{p_1} - x_{m_1})t - (4 + \nu_{p_1} + \nu_{m_1}), \quad t_1 < t < t_2.
\end{equation*}
This proves that $t_1$ is indeed the next discontinuity of $\ell_V'$, and has the form claimed in Theorem \ref{thm:1.8}. Moreover, $\ell_V'(t_1 +) - \ell_V'(t_1-) = 2[(x_{p_1} - x_{m_1}) - (x_{p_0} - x_{m_0})]$.

Continuing in this way, we find that all discontinuities have the form claimed in Theorem \ref{thm:1.8}. To see the upper bound of $N(N+1)/2$ on the number of discontinuities, we only need to note that this is the maximal number of terms that can appear in the sum in the expansion (\ref{eq:413}) (the case in which no terms absorb, and all lengths of subintervals are unique). Since each step in the above process reduces the number of terms in the subsequent expansion, it is obvious that we can have at most $N(N+1)/2$ discontinuities. This completes the proof of Theorem \ref{thm:1.8}.

\section{A POTENTIAL PRODUCING INFINITELY MANY STRINGS OF RESONANCES}\label{sec:4}

Here we use methods from the prior section to construct a potential $V \in L_c^\infty(\R; \C)$ such that $-\frac{d^2}{dx^2} + V$ has infinitely many distinct strings of resonances. It is easy to see from the construction that it can also be used to produce examples of potentials with any finite number of strings of resonances. 

To begin the construction, we make the following assumption on $V$.

\begin{assumption}\label{assumption:5.1}
    Let $b_0 = 0 < b_1 < \cdots < b_k < b_{k+1} \nearrow 1$, let $0 < \beta_1 < \beta_2 < \cdots < \beta_k < \beta_{k+1} < \cdots$ be integers, and let $\set{B_k}_{k=0}^\infty \subset \C /\set{0}$ with $\abs{B_k} \leq 1$ for all $k$. Then we assume that $V \in L_c^\infty(\R; \C)$ has the form 
    \begin{equation*}
        V(x) := \begin{cases}
            B_k(x - b_k)^{\beta_{k+1}}(b_{k+1} - x)^{\beta_{k+1}}, & x \in [b_k, b_{k+1}]\\
            0, & x \in \R/(0, 1).
        \end{cases}
    \end{equation*}
\end{assumption}

Our technical lemma below holds for any $V$ as in Assumption \ref{assumption:5.1}, but for the construction of a potential producing infinitely many strings of resonances in Theorem \ref{thm:5.3}, we will fix $\set{b_k}_{k=0}^\infty$, require $\set{\beta_k}_{k=1}^\infty$ to satisfy a certain growth condition (see \eqref{eq:508}), and then the $B_k$ will be chosen depending upon these parameters to ensure the convergence of certain sums.

The following lemma, similar to Lemma \ref{lem:4.2}, gives the expansions of $f_\pm^V$ for $V$ as in Assumption \ref{assumption:5.1}, that we will use to obtain an expansion of $\det S_V$ analogously to (\ref{eq:407}), but having an infinite number of terms. 

\begin{lm}\label{lem:5.2}
    Let $V$ satisfy Assumption \ref{assumption:5.1}. Then $f_+^V \in C^{\beta_1 + 1}([0, b_1]) \cap C^{\beta_1 + 1}([b_1, 1])$, and on each of these intervals separately, we have $\partial_x^d f_+^V(x, \lambda) = O(\abs{\lambda}^{-1})$ for $0 \leq d \leq \beta_1 + 1$. Moreover, for each $k = 0, 1, 2, \ldots, f_-^V \in C^{\beta_{k+1} + 1}([b_k, b_{k+1}])$ and for $x \in [b_k, b_{k+1}]$, we have 

     \begin{enumerate}[label=\roman*)] 
        \item $\partial_x^d f_-^V(x, \lambda) = O(\abs{\lambda}^{-1})$ for $0 \leq d \leq \beta_1 + 1$,
        
        \item For $m = 1, 2, \ldots , k$, 
        \begin{equation*}
            \partial_x^d f_-^V(x, \lambda) = \sum_{p=1}^m B_p e^{-2i \lambda(x - b_p)}O_{p, k}(\abs{\lambda}^{d - \beta_p - 2}) + O(\abs{\lambda}^{-1})
        \end{equation*}
        for $\beta_m + 2 \leq d \leq \beta_{m+1} + 1$. Here the $O_{p, k}(\cdot)$ bounds may be taken with constants depending on $p, \beta_{p+1}, b_p, b_{p+1}, k, \beta_{k+1}, b_k, b_{k+1}$, but not on $B_p$ or $B_k$. 
    \end{enumerate}
\end{lm}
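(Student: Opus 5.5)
The plan is to follow the proof of Lemma \ref{lem:4.2} essentially verbatim, now with infinitely many singular points $b_1<b_2<\cdots\nearrow 1$. I will track the constants carefully so they do not depend on the $B_p$.

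First I record the regularity. On $[b_k,b_{k+1}]$, $V$ is a polynomial, and at $b_k$ the one-sided derivatives of order $<\beta_k$ (from the left) and $<\beta_{k+1}$ (from the right) vanish. So $V$ is $C^{\beta_k-1}$ across $b_k$, and the first derivative to jump at $b_k$ has order $\nu_k=\beta_k$. Here I use $\beta_k<\beta_{k+1}$: the left piece contributes $V^{(\beta_k)}(b_k-)=\beta_k!\,(-1)^{\beta_k}B_{k-1}(b_k-b_{k-1})^{\beta_k}$, while the right piece vanishes to order $\beta_{k+1}>\beta_k$. Hence the $D_{k,m}$ bookkeeping of Lemma \ref{lem:4.2} becomes simply $\beta_1<\beta_2<\cdots$. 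Elliptic regularity applied to the equation for $f_\pm^V$, which is the resolvent identity leading to \eqref{eq:406}, gives the stated $C^{\beta_{k+1}+1}$ regularity on each piece.

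For $f_+^V$ the integral equation analogous to \eqref{eq:406} runs from the right endpoint $1$ leftward. I would still restrict to $[0,b_1]$ and $[b_1,1]$. On $[b_1,1]$, $V$ is $C^{\beta_1}$-smooth enough (all jumps there have order $\ge\beta_2>\beta_1$), so differentiating $\beta_1+1$ times and integrating by parts produces no boundary terms beyond $O(|\lambda|^{-1})$. The same holds on $[0,b_1]$.

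For $f_-^V$ I proceed by induction on $k$, starting from \eqref{eq:402} and
\[
\partial_x f_- = -\int_0^x e^{-2i\lambda(x-y)}V(y)\,dy+\int_0^x e^{-2i\lambda(x-y)}V f_-(y)\,dy .
\]
Part (i) follows by differentiating up to $\beta_1+1$ times, since every jump has order $\ge\beta_1$. For part (ii) on $[b_k,b_{k+1}]$ with $d=\beta_m+2$, I split $\int_0^x$ into the pieces $\int_{b_{p-1}}^{b_p}$, differentiate, and integrate by parts. The boundary term at $b_p$ is $(V^{(\beta_p)}(b_p+)-V^{(\beta_p)}(b_p-))e^{-2i\lambda(x-b_p)}$. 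It is nonzero only when $\beta_p\le\beta_m$, and I factor it as $B_{p-1}$ times a constant depending only on $\beta_p,b_{p-1},b_p$. (The indexing $B_p$ versus $B_{p-1}$ needs to be reconciled with the statement's labeling convention.) The matching $(Vf_-)$ boundary terms are $O(|\lambda|^{-1})$ times the same factor. The remaining integrals are bounded using the induction hypothesis on the earlier intervals.

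The hard step is uniformity. Each earlier interval carries its own $O_{p,\cdot}$ constants, and infinitely many intervals $[b_{p-1},b_p]$ with $p\le k$ feed into $[b_k,b_{k+1}]$. Since $k$ is fixed this is only a finite sum, but the constants must not involve $B_p$ multiplicatively beyond the one displayed factor. I would exploit $|B_p|\le1$: every appearance of $V$ or its derivatives on $[b_{p-1},b_p]$ is bounded by a constant depending on $\beta_p$ and the $b$'s only. I would also exploit the fact that the bound \eqref{eq:402} (from \cite[Lemma 3.3]{11}) depends only on $\|V\|_\infty\le1$ and the support, which is contained in $[0,1]$. This makes the $O_{p,k}$ constants independent of the $B$'s, and the remaining bounds are produced exactly as in Lemma \ref{lem:4.2}.
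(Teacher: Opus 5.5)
Your proposal is correct and follows the paper's proof. The smoothness claims come from elliptic regularity. The $f_-^V$ estimates come from the same interval-by-interval induction as in Lemma~\ref{lem:4.2}, which is simpler here because the $\beta_k$ increase, and the constants are $B$-independent because $\abs{B_k}\le 1$. The $f_+^V$ estimates come from the first step of Lemma~\ref{lem:4.2} applied on $[0,b_1]$ and on $[b_1,1]$, using that $V\in C^{\beta_2-1}([b_1,1])$.

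Your indexing flag is justified. The order-$\beta_p$ jump at $b_p$ carries the factor $B_{p-1}$, which matches $C_k$ in \eqref{eq:507}, so the $B_p$ in the statement should be read as $B_{p-1}$. Your remark that the Neumann-series bound \eqref{eq:402} depends only on a bound for $V$ and its support makes explicit a uniformity that the paper leaves implicit.
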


\begin{proof}
    The statements about regularity follow from elliptic regularity theory and the definition of $f_\pm^V$ in terms of the resolvent. 

    The statements for $f_-^V$ follow by an induction argument in exactly the same was as in Lemma \ref{lem:4.2}, and are in fact simpler due to our requirement that the $\beta_k$ are increasing; that proof also gives the claim about the $O_{p, k}$ bounds, using in addition that $\abs{B_k} \leq 1$ for all $k$.

    For the statements about $f_+^V$, we use that $V \in C^\infty([0, b_1])$, vanishing at both endpoints up to order $\beta_1$, and that $V \in C^{\beta_2-1}([b_1, 1]) \subset C^{\beta_1}([b_1, 1])$ vanishing up to and including order $\beta_1$, and then apply a proof directly analogous to the first part of the proof of Lemma \ref{lem:4.2} on each of these intervals. This concludes the proof of the lemma. 
\end{proof}

Next, we use integration by parts to compute that, formally, 
\begin{align*}
    \widehat V(-2\lambda) = \sum_{k=0}^\infty \frac{B_k(-1)^{\beta_{k+1}}}{(2i \lambda)^{\beta_{k+1} + 1}} \bigg\{(-1)^{\beta_{k+1}} \beta_{k+1}!&\; (b_{k+1} - b_k)^{\beta_{k+1}} e^{2i\lambda b_{k+1}} - \beta_{k+1}!(b_{k+1} - b_k)^{\beta_{k+1}} e^{2i \lambda b_k}\\
   &\; - \int_{b_k}^{b_{k+1}}e^{2i\lambda x} \partial_x^{\beta_{k+1}+1} \left[(x-b_k)^{\beta_{k+1}}(b_{k+1} - x)^{\beta_{k+1}} \right]dx \bigg\}.
\end{align*}
Hence, for $\operatorname{Im} \lambda \leq 0$, we get 
\begin{equation*}
    \widehat V(-2\lambda) = \sum_{k=0}^\infty \frac{B_k \beta_{k+1}!(b_{k+1} - b_k)^{\beta_{k+1}}}{(2i \lambda)^{\beta_{k+1} + 1}} e^{2i\lambda b_{k+1}}(1 + O_k(\abs{\lambda}^{-1})) + O(\abs{\lambda}^{-\beta_1 - 1}) \tag{4.1} \label{eq:501}
\end{equation*}
where we note that the $O_k(\abs{\lambda}^{-1})$ decaying terms depend upon $k, \beta_{k+1}, b_k,$ and $b_{k+1}$, but not on $B_k$; thus, assuming the $\abs{B_k}$ are sufficiently small (depending upon $k, \beta_{k+1}, b_k, b_{k+1}$), the sum will converge. We also have 
\begin{equation*}
    \widehat V(2\lambda) = \frac{B_0 \beta_1! b_1^{\beta_1}}{(2i\lambda)^{\beta_1 + 1}}(1 + O(\abs{\lambda}^{-1})) + e^{-2i \lambda b_1}O(\abs{\lambda}^{-\beta_1 - 1}). \tag{4.2} \label{eq:502}
\end{equation*}

We now want to obtain similar expansions for the reflection coefficients 
\begin{equation*}
    \rho_\pm^V(-\lambda) = \widehat V(\pm 2\lambda) - \int_0^1 e^{\mp 2i\lambda x}V(x) f_\pm^V(x, \lambda)dx,
\end{equation*}
so we need to bound the integral term on the right. Working first with $\rho_-$ we begin by splitting the integral into a sum of integrals over $[b_k, b_{k+1}]$ and integrating by parts an appropriate number of times on each interval to obtain: 
\begin{align*}
    \int_0^1& e^{2i \lambda x}V(x) f_-^V(x, \lambda)dx = \sum_{k=0}^\infty \frac{B_k(-1)^{\beta_{k+1}}}{(2i \lambda)^{\beta_{k+1} + 1}} \bigg\{ (-1)^{\beta_{k+1}} \beta_{k+1}!(b_{k+1} - b_k)^{\beta_{k+1}} f_-^V(b_{k+1}, \lambda)e^{2i \lambda b_{k+1}}\\
    -&\; \beta_{k+1}!(b_{k+1} - b_k)^{\beta_{k+1}} f_-^V(b_k, \lambda)e^{2i \lambda b_k} - \int_{b_k}^{b_{k+1}} e^{2i\lambda x} \partial_x^{\beta_{k+1} + 1} \left[(x - b_k)^{\beta_{k+1}}(b_{k+1} - x)^{\beta_{k+1}} f_-^V(x, \lambda) \right] dx \bigg \} \tag{4.3} \label{eq:503}
\end{align*}
Now, we may write $\partial_x^{\beta_1 + 1}\left[(x - b_0)^{\beta_1}(b_1 - x)^{\beta_1}f_-^V(x, \lambda)\right] = O(\abs{\lambda}^{-1})$ and for $k \geq 1$, 
\begin{equation*}
    \partial_x^{\beta_{k+1} + 1}\left[(x - b_k)^{\beta_{k+1}}(b_{k+1} - x)^{\beta_{k+1}} f_-^V(x, \lambda) \right] = \sum_{d = 0}^{\beta_1 + 1} g_{k, d}(x) \partial_x^d f_-^V(x, \lambda) + \sum_{m=1}^k \sum_{d = \beta_m + 2}^{\beta_{m+1} + 1} g_{k, d}(x) \partial_x^d f_-^V(x, \lambda)
\end{equation*}
for functions $g_{k, d}(x) = O_k(1)$ with bounds depending only on $k, \beta_{k+1}, b_k, b_{k+1}$. Thus, using Lemma \ref{lem:5.2} and rearranging a sum, if $k \geq 1$, 
\begin{align*}
    \partial_x^{\beta_{k+1} + 1} \left[(x - b_k)^{\beta_{k+1}}(b_{k+1} - x)^{\beta_{k+1}} f_-^V(x, \lambda) \right] =&\; O_k(\abs{\lambda}^{-1}) + \sum_{m=1}^k \sum_{d = \beta_m + 2}^{\beta_{m+1} + 1} g_{k, d}(x) \sum_{p=1}^m B_p e^{-2i \lambda(x - b_p)}O_{p, k}(\abs{\lambda}^{d - \beta_p - 2})\\
    =&\; O_k(\abs{\lambda}^{-1}) + \sum_{p=1}^k B_p e^{-2i \lambda (x - b_p)}O_{p, k}(\abs{\lambda}^{\beta_{k+1} - \beta_p - 1})
\end{align*}
where the $O_k(\cdot)$ is bounded in terms of $k, \beta_{k+1}, b_k, b_{k+1}$, and the $O_{p, k}(\cdot)$ in terms of $p, \beta_{p+1}, b_p, b_{p+1}, k, \beta_{k+1}, b_k, b_{k+1}$. Using this in (\ref{eq:503}) and rearranging another sum, we easily obtain 
\begin{equation*}\tag{4.4} \label{eq:504}
\begin{aligned}
    \int_0^1 e^{2i \lambda x} V(x) f_-^V(x, \lambda) dx =&\; e^{2i \lambda b_1} O(\abs{\lambda}^{-\beta_1 - 2}) + \sum_{k=1}^\infty \bigg\{ B_k e^{2i \lambda b_{k+1}} O_k(\abs{\lambda}^{-\beta_{k+1} - 2}) \\
    +&\; B_k \sum_{p=1}^k B_p e^{2i \lambda b_p} O_{p, k}(\abs{\lambda}^{- \beta_p - 2}) \bigg\}\\
    =&\; \sum_{k=0}^\infty B_k e^{2i \lambda b_{k+1}} O_k( \abs{\lambda}^{- \beta_{k+1} - 2}) + \sum_{p=1}^\infty B_p e^{2i \lambda b_p} \sum_{k=p}^\infty B_k O_{p, k}(\abs{\lambda}^{- \beta_p - 2}) 
    \end{aligned}
\end{equation*}

Taking the $\abs{B_k}$ yet smaller if necessary, all sums in \eqref{eq:504} converge. Thus, from \eqref{eq:504} and \eqref{eq:501}, we have

\begin{equation*}
    \rho_-^V(-\lambda) = \sum_{k=0}^\infty \frac{B_k \beta_{k+1}!(b_{k+1} - b_k)^{\beta_{k+1}}}{(2i \lambda)^{\beta_{k+1} + 1}}e^{2i \lambda b_{k+1}}(1 + O_k( \abs{\lambda}^{-1})) + O(\abs{\lambda}^{-\beta_1 - 1}) \tag{4.5} \label{eq:505}
\end{equation*}
with $O_k(\cdot)$ bounded in terms of $k, \beta_{k+1}, b_k, b_{k+1}$. Analogously, using the bounds for $f_+^V$ from Lemma \ref{lem:5.2} and (\ref{eq:502}), we get 
\begin{equation*}
    \rho_+^V(-\lambda) = \frac{B_0 \beta_1! b_1^{\beta_1}}{(2i \lambda)^{\beta_1 + 1}}(1 + O(\abs{\lambda}^{-1})) + e^{-2i \lambda b_1}O(\abs{\lambda}^{- \beta_1 - 1}). \tag{4.6} \label{eq:506}
\end{equation*}

From (\ref{eq:405}), (\ref{eq:505}), (\ref{eq:506}), we obtain the following expansion of the scattering determinant in $\operatorname{Im} \lambda \leq 0$:
\begin{equation*}
    \det S_V(-\lambda) = 1 + \sum_{k=1}^\infty \frac{C_k}{(2i \lambda)^{4 + \beta_1 + \beta_k}} e^{2i \lambda b_k}(1 + O_k(\abs{\lambda}^{-1})) + O(\abs{\lambda}^{-1}), \tag{4.7} \label{eq:507}
\end{equation*}
where $C_k := -B_0\beta_1!b_1^{\beta_1} B_{k-1}\beta_k!(b_k - b_{k-1})^{\beta_k}$, and the $O_k(\cdot)$ depend only on $k, \beta_{k+1}, b_k, b_{k+1}$. 

We now make the following additional requirements on the $\beta_k$.

\begin{equation*}\tag{4.8} \label{eq:508}
\begin{aligned}
    i)\;\;\frac{4 + 2 \beta_1}{2b_1} < \frac{\beta_2 - \beta_1}{2(b_2 - b_1)} < \frac{\beta_3 - \beta_2}{2(b_3 - b_2)} < \cdots < \frac{\beta_{k+1} - \beta_k}{2(b_{k+1} - b_k)} <\frac{\beta_{k+2} - \beta_{k+1}}{2(b_{k+2} - b_{k+1})}  < \cdots\\
    ii)\;\; \frac{4 + 2 \beta_1}{2b_1} \leq \frac{4 + \beta_1 + \beta_k}{2b_k}
        \text{ for all } k \geq 1, \text{ and for } p = 1, 2, 3, \ldots , \frac{\beta_{p+1} - \beta_p}{2(b_{p+1} - b_p)} \leq\\ \frac{\beta_{k+1} - \beta_p}{2(b_{k+1} - b_p)} \text{ for all } k \geq p. 
\end{aligned}    
\end{equation*}

We note that once the $\set{b_k}$ are fixed, these requirements on $\set{\beta_k}$ can be made inductively, simply requiring each $\beta_k$ to be large enough depending upon the previous ones. 

Defining $t_1 := \frac{4 + 2\beta_1}{2b_1}, t_k := \frac{\beta_k - \beta_{k-1}}{2(b_k - b_{k-1})}$ for $k \geq 2$, we may now prove:

\begin{thm}\label{thm:5.3}
    Let $V$ satisfy Assumption \ref{assumption:5.1}, and assume, in addition, that the $\beta_k$ satisfy the requirements in (\ref{eq:508}), and $\abs{B_k}$ are chosen sufficiently small. Then for each $k = 1, 2, 3, \ldots$, there is a sequence of resonances asymptotic to $\set{\operatorname{Im} \lambda = -t_k \log(1 + \abs{\lambda}), \operatorname{Re} \lambda > 0}$, with linear density 
    \begin{equation*}
        \frac{1}{\pi}(b_k - b_{k-1}).
    \end{equation*}
\end{thm}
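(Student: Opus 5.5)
The plan is to run the argument of Section \ref{sec:4.3} on the infinite expansion \eqref{eq:507}, and then conclude with Theorem \ref{thm:1.3} exactly as in the proof of Theorem \ref{thm:1.6}. Set $f(\lambda)$ equal to $\det S_V(-\lambda)$ multiplied by the rational factor of Section \ref{sec:2}, so that $f\in\mathcal F$ with $\rho=1$. For $k\ge 1$ the $k$-th term of \eqref{eq:507} is $\lambda^{-(4+\beta_1+\beta_k)}e^{2i\lambda b_k}$ up to constants. Along $\lambda=re^{i\theta_t(r)}$ its modulus is $r^{2b_k t-(4+\beta_1+\beta_k)}(1+o(1))$. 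So the problem reduces to determining, for each $t$, which term dominates. I will show that term $k$ dominates exactly for $t\in(t_k,t_{k+1})$, that the constant $1$ dominates for $t<t_1$, and that on these intervals
\[
\ell_V(t)=2b_kt-(4+\beta_1+\beta_k).
\]

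\textbf{Step 1: comparing exponents.} Fix $t\in(t_k,t_{k+1})$. For $j>k$, the exponent difference between term $j$ and term $k$ is
\[
2(b_j-b_k)t-(\beta_j-\beta_k).
\]
Telescoping and using \eqref{eq:508}(i) gives $(\beta_j-\beta_k)/(2(b_j-b_k))\ge t_{k+1}>t$, so this difference is negative. For $j<k$, the difference between term $j$ and term $k$ is $-[2(b_k-b_j)t-(\beta_k-\beta_j)]$. Here $(\beta_k-\beta_j)/(2(b_k-b_j))$ is a weighted average of $t_{j+1},\dots,t_k$, so it is at most $t_k<t$, and again the difference is negative. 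Finally, term $k$ beats the constant $1$ because $t>t_1$ and \eqref{eq:508}(i),(ii) give $t_k\ge t_1$ and $(4+\beta_1+\beta_k)/(2b_k)\le t_k$. The latter follows by writing $4+\beta_1+\beta_k=(4+2\beta_1)+(\beta_k-\beta_1)$ and averaging again.

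\textbf{Step 2: uniform control of the infinite tail.} This is the main obstacle. Pointwise domination of each term is not enough, because infinitely many terms remain. The factor $r^{-\eta(j)}$ with $\eta(j)=(\beta_j-\beta_k)-2(b_j-b_k)t$ grows in $j$, since by \eqref{eq:508}(i) the increments $\beta_{j+1}-\beta_j>2t(b_{j+1}-b_j)$ exceed a fixed multiple of $t_{k+1}-t$. However, the $O_k(|\lambda|^{-1})$ remainders have constants depending on $k$. I would first choose $|B_k|$ small, depending on those constants, so that $\sum_k|C_k|\sup_{|\lambda|>R_0}|1+O_k(|\lambda|^{-1})|<\infty$ with a uniform $R_0$. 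Then for $r\ge R_0$ the tail $\sum_{j\ne k}$, after dividing by term $k$, is bounded by $\sum_{j\ne k}|C_j/C_k|\,r^{-\eta(j)}$. For $j>k$ the gaps are at least $c(j-k)$, and for $j<k$ the gaps form a finite set of positive numbers, so this sum tends to $0$. This gives
\[
C_k^{-1}(2i\lambda)^{4+\beta_1+\beta_k}e^{-2i\lambda b_k}\det S_V(-\lambda)=1+o(1),
\]
locally uniformly for $t$ in compact subsets of $(t_k,t_{k+1})$. The same bound with all terms small handles $t<t_1$. In particular, the limit \eqref{eq:104} exists for all $t\notin\{t_k\}$. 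For the degenerate cases of equality in \eqref{eq:508}(ii), I would note that the $t_k$ are still strictly increasing, so the formula for $\ell_V$ holds on each open interval.

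\textbf{Step 3: counting.} Since $\ell_V$ is convex and continuous (Lemma \ref{lem:2.1}), we get $\ell_V'=2b_k$ on $(t_k,t_{k+1})$, with $b_0=0$ below $t_1$. Apply Theorem \ref{thm:1.3} with $\rho=1$ at points $s\in(t_{k-1},t_k)$ and $t\in(t_k,t_{k+1})$. This shows that the number of resonances between the curves $\{\operatorname{Im}\lambda=-s\log(1+|\lambda|)\}$ and $\{\operatorname{Im}\lambda=-t\log(1+|\lambda|)\}$ with $|\lambda|<r$ is
\[
\frac{r}{2\pi}\,2(b_k-b_{k-1})(1+o(1)).
\]
Letting $s\uparrow t_k$ and $t\downarrow t_k$ gives a string asymptotic to the curve with parameter $t_k$ and linear density $\frac1\pi(b_k-b_{k-1})$, as in the proof of Theorem \ref{thm:1.6}.
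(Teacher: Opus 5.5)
Your proposal is correct and follows the paper's proof. As in the paper, you compute from \eqref{eq:507} that $\ell_V\equiv 0$ on $(0,t_1)$ and $\ell_V(t)=2b_kt-(4+\beta_1+\beta_k)$ on $(t_k,t_{k+1})$, with the limit existing there, and then conclude via Theorem \ref{thm:1.3} as in the proof of Theorem \ref{thm:1.6}. The only difference is cosmetic: you compare every term directly with the dominant $k$-th term using the weighted-average form of \eqref{eq:508}(i), and you spell out the uniform tail bound, whereas the paper peels off dominant terms inductively via \eqref{eq:511}.
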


\begin{proof}
    We will use (\ref{eq:507}) to show that for $t < t_1$, the limit 
    \begin{equation*}
        \ell_V(t) = \lim_{r \to \infty} \frac{\log \abs{\det S_V(-re^{i \theta_t(e)})}}{ \log r} = 0, \tag{4.9} \label{eq:509}
    \end{equation*}
    exists, and for each $k \geq 1$, the limit 
    \begin{equation*}
        \ell_V(t) = \lim_{r \to \infty} \frac{\log \abs{\det S_V(-re^{i \theta_t(r)})}}{\log r} = 2b_kt - (4 + \beta_1 + \beta_k) \tag{4.10} \label{eq:510}
    \end{equation*}
    exists for all $t_k < t < t_{k+1}$. The conclusion then follows by application of Theorem \ref{thm:1.3}. 

    To prove (\ref{eq:509}), we note that if $t < t_1$, then as $r \to \infty$ with $\lambda = re^{i \theta_t(r)}$, we have 
    \begin{equation*}
        \abs{\lambda^{-4 - \beta_1 - \beta_k} e^{2i \lambda b_k}} = o(1)
    \end{equation*}
    uniformly in $k$. Hence from (\ref{eq:507}), 
    \begin{equation*}
        \det S_V(- re^{i \theta_t(r)}) = 1 + o(1)
    \end{equation*}
    as $r \to \infty$, and (\ref{eq:509}) follows. 

    Next, we claim that for $p = 1,2, 3, \ldots$, as $r \to \infty$ with $\lambda = re^{i \theta_t(r)}$ and $t > t_p$, we have 
    \begin{equation*}
        \det S_V(-\lambda) = \frac{C_p}{(2i \lambda)^{4 + \beta_1 + \beta_p}}e^{2i \lambda b_p} \left(1 + \sum_{k = p+1}^\infty \frac{C_k/C_p}{(2i \lambda)^{\beta_k - \beta_p}}e^{2i \lambda(b_k - b_p)} (1 + O_k(\abs{\lambda}^{-1})) + o(1)\right). \tag{4.11} \label{eq:511}
    \end{equation*}
    To see this, we first prove the case $p = 1$. Note that if $t > t_1$, then as $r \to \infty$ with $\lambda = re^{i \theta_t(r)}$, we have $\abs{\lambda^{4 + 2\beta_1}e^{-2i \lambda b_1}} = o(1)$, so pulling this term out in (\ref{eq:507}) we easily obtain the case $p =1 $ in (\ref{eq:511}). Assuming (\ref{eq:511}) proven for some $p$, we note that $\abs{\lambda^{\beta_{p+1} - \beta_p}e^{-2i \lambda (b_{p+1} - b_p)}} = o(1)$ if $t > t_{p+1}$, so we may pull this term of out (\ref{eq:511}) to prove the case $p + 1$. Thus (\ref{eq:511}) holds for all $p$. 

    Fixing $p$, we note that if $t_p < t < t_{p+1}$, then as $r \to \infty$ with $\lambda = re^{i \theta_t(r)}$, we have $\abs{\lambda^{-(\beta_k - \beta_p)} e^{2i \lambda(b_k - b_p)}} = o(1)$ uniformly in $k \geq p+1$. From (\ref{eq:511}), then, 
    \begin{equation*}
        \det S_V(-\lambda) = \frac{C_p}{(2i\lambda)^{4 + \beta_1 + \beta_p}}e^{2i \lambda b_p}(1 + o(1))
    \end{equation*}
    and (\ref{eq:510}) follows from this. The proof is complete. 
\end{proof}

\bibliographystyle{amsplain}

\bibliography{references2}

\noindent \emph{Email address}: travisdcunningham@gmail.com

\end{document}